\documentclass[11pt]{article}
\usepackage{amsmath,amssymb,amsfonts,amsthm}
\usepackage{setspace}
\setlength{\parindent}{18pt}
\textwidth15cm
\textheight22cm
\newenvironment{myabstract}{\par\noindent
{\bf Abstract . } \small }
{\par\vskip8pt minus3pt\rm}
\newcounter{item}[section]
\newcounter{kirshr}
\newcounter{kirsha}
\newcounter{kirshb}
\newenvironment{enumarab}{\setcounter{kirshb}{1}
\begin{list}{(\arabic{kirshb})}{\usecounter{kirshb}} }{\end{list}}
\newtheorem{theorem}{Theorem}[section]

\newtheorem{lemma}[theorem]{Lemma}
\newtheorem{corollary}[theorem]{Corollary}
\theoremstyle{definition}

\newtheorem{example}[theorem]{Example}
\newtheorem{definition}[theorem]{Definition}
\def\R{\mathbb{R}}

\def\C{{\mathfrak{C}}}
\def\Fm{{\mathfrak{Fm}}}

\def\At{{\bf At}}
\def\Nr{{\mathfrak{Nr}}}

\def\Sg{{\mathfrak{Sg}}}
\def\Fm{{\mathfrak{Fm}}}
\def\A{{\mathfrak{A}}}
\def\B{{\mathfrak{B}}}
\def\C{{\mathfrak{C}}}
\def\D{{\mathfrak{D}}}
\def\M{{\mathfrak{M}}}
\def\N{{\mathfrak{N}}}

\def\CA{{\bf CA}}

\def\QEA{{\bf QEA}}

\def\Df{{\bf Df}}

\def\Lf{{\bf Lf}}

\def\PA{{\bf PA}}

\def\K{{\bf K}}
\def\K{{\bf K}}

\def\RCA{{\bf RCA}}

\def\Rd{{\ Rd}}
\def\(R)RA{{\bf (R)RA}}
\def\RA{{\bf RA}}

\def\R{\mathbb{R}}

\def\Id{{\bf Id}}
\def\c #1{{\cal #1}}
 \def\CA{{\sf CA}}
\def\B{{\sf B}}
\def\G{{\sf G}}
\def\w{{\sf w}}
\def\y{{\sf y}}
\def\g{{\sf g}}

\def\r{{\sf r}}
\def\K{{\sf K}}

 \def\Cm{{\mathfrak{Cm}}}
\def\Nr{{\mathfrak{Nr}}}

\def\R{\sf R}
\def\Ra{{\mathfrak{Ra}}}
\def\Ca{{\mathfrak{Ca}}}
\def\set#1{\{#1\} }
\def\Ra{{\mathfrak{Ra}}}
\def\Nr{{\mathfrak{Nr}}}
\def\Tm{{\mathfrak{Tm}}}
\def\A{{\mathfrak{A}}}
\def\B{{\mathfrak{B}}}
\def\C{{\mathfrak{C}}}
\def\D{{\mathfrak{D}}}

\def\A{{\mathfrak{A}}}
\def\B{{\mathfrak{B}}}
\def\C{{\mathfrak{C}}}
\def\D{{\mathfrak{D}}}

\def\GG{{\mathfrak{GG}}}

\def\L{{\mathfrak{L}}}
\def\Rd{{\mathfrak{Rd}}}

\def\At{{\mathfrak{At}}}
\def\L{{\mathfrak{L}}}

\def\CA{{\bf CA}}
\def\RA{{\bf RA}}

\def\RCA{{\bf RCA}}

\def\G{{\bf G}}

\def\F{{\mathfrak{F}}}
\def\At{{\sf{At}}}
\def\N{\mathbb{N}}
\def\R{\mathfrak{R}}

\def\Cs{{\sf Cs}}

\def\c #1{{\cal #1}}

\def\pa{$\forall$}
\def\pe{$\exists$}

\def\ef{Ehren\-feucht--Fra\"\i ss\'e}

\def\nodes{{\sf nodes}}

\def\Ra{{\mathfrak{Ra}}}
\def\Nr{{\mathfrak{Nr}}}
\def\Z{{\cal Z}}
\def\CA{{\bf CA}}
\def\RCA{{\bf RCA}}
\def\c#1{{\mathcal #1}}

\def\set#1{ \{#1\}}

\def\Ca{{\mathfrak Ca}}

\def\pe{$\exists$}
\def\pa{$\forall$}
\def\Cm{{\mathfrak Cm}}
\def\Sg{{\mathfrak Sg}}

\def\Rl{{\mathfrak Rl}}
\def\N{{\cal N}}

\def\ls { L\"owenheim--Skolem}
\def\At{{\sf At}}

\def\rng{{\sf rng}}
\def\dom{{\sf dom}}
\def\Cm{{\sf Cm}}
\def\Mat{{\sf Mat}}
\def\w{{\sf w}}
\def\g{{\sf g}}
\def\y{{\sf y}}
\def\r{{\sf r}}
\def\bb{{\sf b}} 

\def\RQEA{{\sf RQEA}}
\def\ws{winning strategy}
\def\ef{Ehren\-feucht--Fra\"\i ss\'e}

 \def\CA{{\sf CA}}
\def\Cs{{\sf Cs}}
\def\RCA{{\sf RCA}}

\def\RA{{\sf RA}}
\def\PA{{\sf PA}}

\def\QEA{{\sf QEA}}

\def\y{{\sf y}}
\def\g{{\sf g}}
\def\bb{{\sf b}}
\def\r{{\sf r}}
\def\w{{\sf w}}
\def\Z{{\mathbb{Z}}}
\def\N{{\mathbb{N}}}

\def\c{{\sf c}}
\def\s{{\sf s}}
\def\t{{\sf t}}

\def\d{ Dedekind--MacNeille}
\def\Id{{\sf Id}}

\def\Df{{\sf Df}}
\def\Lf{{\sf Lf}}
\def\K{{\sf K}}
\def\nodes{{\sf nodes}}

\def\G{{\bold G}}

\def\Df{{\sf Df}}
\def\PA{{\sf PA}}
\def\Id{{\sf Id}}
\def\QEA{{\sf QEA}}
\def\s{{\sf s}}

\def\CA{{\sf CA}}
\def\K{{\sf K}}

\def\RCA{{\sf RCA}}
\def\RQEA{{\sf RQEA}}
\def\A{{\mathfrak{A}}}
\def\Cs{{\sf Cs}}
\def\t{{\sf t}}
\def\V{{\sf V}}

\def\G{{\mathfrak{G}}}
\def\GG{{\bold G}}
\def\de{Dedekind-MacNeille}
\def\Cm{{\mathfrak{Cm}}}
\def\M{{\sf M}}
\def\T{{\sf T}}
\def\Nr{{\sf Nr}}
\def\H{{\cal H}}
\title{A brief history of algebraic logic, from neat embeddings to games theory and rainbow constructions}
\author{Tarek Sayed Ahmed\\
Department of Mathematics, Faculty of Science,\\
Cairo University, Giza, Egypt.
 }
\date{}
\begin{document}
\maketitle

\begin{myabstract}  
We take a long magical tour in algebraic logic, starting from classical results on
neat embeddings due to Henkin, Monk and Tarski, all the way
to recent results in algebraic logic using so--called rainbow constructions invented by Hirsch and Hodkinson. 
Highlighting the connections with graph theory, model theory, and finite combinatorics,  this
article aspires to present topics of broad interest in a
way that is hopefully accessible to a large audience.  
The  paper has a survey character but it contains new approaches to old ones. 
We aspire to make our survey fairly 
comprehensive, at least in so far as Tarskian algebraic logic, specifically,  the theory of cylindric algebras, is concerned.
Other topics, such as abstract algebraic logic, modal logic and the so--called (central) finitizability problem 
in algebraic logic will be dealt with;  the last in some detail.  
Rainbow constructions are used to solve problems adressing classes of cylindric--like algebras 
consisting of algebras having a neat embedding property.  The hitherto obtained results generalize 
seminal results of Hirsch and Hodkinson on non--atom canonicity, 
non--first order definabiity and non--finite 
axiomatizability, proved for  
classes of representable cylindric algebras of finite dimension$>2$. We show that such results remain valid 
for cylindric algebras  possesing relativized  {\it clique guarded} 
representations that are {\it only locally} well behaved. The paper is written in a way that makes it  
accessible to non--specialists  curious about the state of the art in Tarskian algebraic logic. 
Reaching the boundaries of current research, the paper also aspires to be 
informative to the practitioner, and even more, stimulates her/him to carry on 
further research in main stream 
algebraic logic.
\end{myabstract}

\section{Introduction}

\subsection{Algebraic Logic}

The topic of this paper is algebraic logic in a broad sense. Initiated at the beginning of the 20th century, formal logic and its study using mathematical machinery
known as metamathematical investigations, or simply metamathematics, is a crucial addition to the collection 
of mathematical catalysts. 
Traced back to the works of Frege, Hilbert, Russell, Tarski, G\"odel and others,
one of the branches of pure mathematics that metamathematics  has precipitated is algebraic logic.
This paper aspires to give a compact, yet fairly comprehensive survey, of the  
history of the subject up to the present day.

Algebraic logic starts from certain special logical considerations, abstracts from them, places them in a general algebraic context
and via this generalization makes contact with other branches of mathematics (like set theory and topology).
It can neither be maintained nor overemphasized that algebraic logic is more algebra than logic,
nor that it is more logic than algebra; in this paper we argue that algebraic logic, particularly the theory of cylindric algebras,
has become sufficiently interesting and deep
to acquire a distinguished status among other subdisciplines of mathematical logic, and indeed of
pure mathematics.
In algebraic logic a  deeper understanding of  both (universal) algebra and logic is not hindered by the largely irrelevant details of a particular logical system; 
it is instead  guided structurally by clear--cut causality.
Hypotheses are kept as general as possible and introduced on a by--need basis, 
and thus results and proofs are modular and easy to track down. 
Access to highly non--trivial results is also considerably facilitated.

The continuous interplay between the specific and the general in algebraic logic brings a
large array of new results for particular non--conventional approaches, unifies several known results,
produces new results in well-studied conventional areas,
and finally reveals previously unknown causality relations. 
Algebraic logic  can be viewed as a playground
where several actors, most notably model theory, set theory, finite combinatorics and graph theory play.
It is an  interesting play which
brought and is likely to bring more significant changes and development to the actors, and in a way, 
it also revolutionized our way of  thinking about logic. 
{\it Algebraizing predicate logic}, 
a task primarly initiated by Tarski proved an extremely rewarding task, with significant early contributions from the 
(also) Polish logicians Rasiowa and Sikorsi,
as well as from Halmos who invented the theory of polyadic algebras 
which is a `cousin' to cylindric algebras.

The history of logic, evolving into {\it mathematical logic or metamathematics} 
has been long and winding. From the beginning
of the contemporary era of logic there were two approaches to the subject, one
centered on the notion of logical equivalence and the other, reinforced by Hilbert's work on metamathematics,
centered
on the notions of assertion and inference.
It was not until much later that logicians started to think about
connections between these two ways of looking at logic. Tarski
\cite{Tar1935} gave the precise connection between Boolean algebra
and the classical propositional calculus. His approach builds on
Lindenbaum's idea of viewing the set of formulas as an algebra with
operations induced by the logical connectives. Logical equivalence
is a congruence relation on the formula algebra. This is the
so-called {\it Lindenbaum--Tarski} method.
When Tarski applied this method to the predicate
calculus, it led him naturally to the concept of cylindric algebras.

Henkin began working with Tarski on the subject of cylindric algebras in the 1950's, and
a report of their joint research appeared in 1961. By then Monk had
also made substantial contributions to the theory. The three
planned to publish a comprehensive two--volume treatise on the theory
of cylindric algebras. The first volume treated cylindric
algebras from a general algebraic point of view, while the second
volume contained other topics, such as the representation
theory, to which Andr\'eka and N\'emeti contributed a great deal,  and connections between cylindric--like
algebras and logic. The second volume though entitled `Cylindric Algebras, Part 2' also dealt with other algebraic formalisms 
of first order logic like Halmos' polyadic algebras and relation algebras.

We can find that the theory of cylindric--like algebras is explicated primarily in three substantial monographs:
Henkin, Monk and Tarski \cite{HMT1, HMT2}, and Henkin, Monk,
Tarski, Andr\'eka and N\'emeti \cite{HMTAN81}.

This covers the development of the subject till the mid--eighties of the last century.
The recent \cite{1}, referred to as `Cylindric Algebras, Part 3', the notation of which we follow here,
gives a representative picture of the research in the area over the last
thirty years,  emphasizing the bridges that  cylindric algebra theory--in the wide sense--has built
with other areas, like combinatorics, graph theory, data base theory, stochastics and other fields.
Not confined to the walls of pure mathematics, algebraic logic
has also found bridges to such apparently remote areas as general relativity, and hyper-computation.
Techniques from \cite{HHbook} are also used but we our treatment will be self--contained; all basic notions and terminolgy will be recalled
in due time.

This paper also surveys, refines and adds to the latest developments
of the subject, but on a smaller scale, of course. In particular, we focus more on `pure' Tarskian
algebraic logic.  We emphasize that the paper is not only purely expository. 

The paper  gives a survey of new recent results with fairly complete proofs, 
and it contains new ideas
as well as new approaches to old ones. 
The results  throughout the whole paper are not presented in a chronological  order as they appeared historically;
rather we move back and forth in time
between deep results in algebraic logic. The temporal (historical) development of a certain topic (particularly in mathematics) 
does not neccesarily coincide with the most logical  one,  for newly discovered results often 
shed light on older ones, resulting in a deeper understanding of both.  In (algebraic) logic new paradigms usually 
present themselves in the
form of new systems competing with old ones. This suggests 
a fresh look at existing logical systems, 
rather than their speedy overthrow.

Both developments do 
not even necessarily coincide with the simplest, which we aspire to achieve in this paper.

For example, take the following three branches in algebraic logic:
(1)  algebraization of logical calculi (deductive systems) leading up to abstract algebraic logic, 
(2)  algebraic approach to first order logic largely due to 
Tarski, manifesting itself in the study  of cylindric  algebras (the central topic of the 
present paper), and finally 
(3)  algebraic approach to non--classical propositional logics like intuitionistic and modal logic. 

The work of Gabbay and Maksimova  on amalgamation and interpolation categorized in (3), 
though historically originate in connection with the theory of polyadic and 
cylindric algebras through the work of pioneers 
including Diagneault, Johnson, Pigozzi and Comer investigating such properties for concrete 
classes of cylindric and polyadic algebras categorized in (2), can be classified 
today as the part of abstract algebraic  logic dealing with definability issues  
categorized in  (1).

The purpose of this paper is twofold. Apart from giving  a general overview to some of the fundamental ideas and methods of applying 
algebraic machinery to logic, we also intend to present recent developments from algebraic logic and logic
in an integrated format that is accessible to the non--specialist and informative for the practioner.  
Our intention is to unify, illuminate and generalize several existing results 
scattered in the literature, hopefully stimulating further research.
We hope that after reading this paper  the reader will get more than a glimpse 
of rapid dissemination of the latest research in the field, and also contribute to it 
if she/he wishes.

However, algebraic logic today is a huge subject.
Therefore our treatment generally is bound to be selective, focused on some but not all topics.
The topics we focus on are
{\it mainstream Tarskian algebraic logic}.  
Occasionally (but not always) few details can be omitted from long proofs. Our 
excuse will be that we wish to emphasize certain concepts 
while saying no more than absolutely necessary.  
To make up for all this, we include many references for those who
wish to dig deeper. 
We focus on {\it cylindric algebras}. Notation used is common or /and  self--explonatory; 
if not, then it will 
be explained at its first occurrence. Our notation is consistent with the notation in \cite{1, HMT1, HMT2}.
In particular, for an ordinal $\alpha$, $\CA_{\alpha}$ denotes the class of cylindric algebras of dimension
$\alpha$.
Although the paper addresses a variety of topics in the textbooks \cite{HMT1, HMT2, HHbook}, we require only familiarity  
with the minimal basics of cylindric algebra theory.  In this respect, 
the paper is fairly self--contained. All required definitions and terminology 
will be recalled in due time whenever needed.
Throughout the paper we make the following convention. We denote infinite ordinals by $\alpha, \beta\ldots$ 
and finite ordinals by $n, m\ldots$. Ordinals which are arbitary meaning that they could be finite or infinite 
will be denoted by  $\alpha,\beta\ldots$. 
Also 
algebras will be denoted by Gothic letters, and when we write $\A$ for an algebra, then we shall be tacitly assuming that 
$A$ denotes its universe, that is 
$\A=\langle A, f_i^{\A}\rangle_{i\in I}$ where $I$ is a non--empty set and $f_i$ $(i\in I)$ are the operations in the signature of $\A$ 
interpreted via $f_i^{\A}$ in $\A$. For better readability, we omit the superscript $\A$ and we write simply 
$\A=\langle A, f_i\rangle_{i\in I}$.

The paper is divided into two parts. The first part is a general survey on recent results in algebraic logic mostly in the last three decades 
building (possibly new) bridges between 
them. The second part is more technical addressing rainbow constructions. Such constructions are used 
to solve problems on classes of algebras 
whose members have a neat embedding 
property. The 
main theorem in the second part is theorem \ref{can}. 

\section*{Part 1}

\section{Cylindric algebras}

A {\it cylindric algebra} consists of a Boolean algebra endowed with an
additional structure consisting of distinguished elements and
operations, satisfying a certain system of equations. The
introduction and study of these algebras has its motivation in two
parts of mathematics: the deductive systems of first-order logic,
and a portion of elementary set theory dealing with spaces of
various dimensions, better known as cylindric set algebras; such algebras also have a geometric twist,
reflected in the terminology `cylinder'. If we are working in $3$ dimensions,
and we apply the unary operations of cylindrifiers (algebraizing existential quantifiers) to a `circle',
then we are forming the cylinder based on this circle.

Cylindric set algebras are algebras whose elements are relations of a certain pre-assigned arity, endowed with set--theoretic operations
that utilize the form of elements of the algebra as sets of sequences.
For a set $V$, ${\cal B}(V)$ denotes the Boolean set algebra $\langle \wp(V), \cup, \cap, \sim, \emptyset, V\rangle$.
We will primarily deal with a set $V$ consisting of $\alpha$--ary sequences, for some ordinal $\alpha$.

Let $U$ be a set and $\alpha$ an ordinal; $\alpha$ will be the dimension of the algebra.
For $s,t\in {}^{\alpha}U$ write $s\equiv_i t$ if $s(j)=t(j)$ for all $j\neq i$.
For $X\subseteq {}^{\alpha}U$ and $i,j<\alpha,$ let
$${\sf C}_iX=\{s\in {}^{\alpha}U: \exists t\in X (t\equiv_i s)\}$$
and
$${\sf D}_{ij}=\{s\in {}^{\alpha}U: s_i=s_j\}.$$

$\langle{\cal B}(^{\alpha}U), {\sf C}_i, {\sf D}_{ij}\rangle_{i,j<\alpha}$ is called {\it the full cylindric set algebra of dimension $\alpha$}
with unit (or greatest element) $^{\alpha}U$.
We follow the conventions of \cite{HMT2} where the cylindric operations in set algebras are denoted by capital letters, while
the cylindric operations in abstract algebras are denoted by small letters.
Examples of subalgebras of such set algebras arise naturally from models of first order theories.
Indeed, if $\M$ is a first order structure in a first
order signature $L$ with $\alpha$ many variables, then one manufactures a cylindric set algebra based on $\M$ as follows.
Let
$$\phi^{\M}=\{ s\in {}^{\alpha}{\M}: \M\models \phi[s]\},$$
(here $\M\models \phi[s]$ means that $s$ satisfies $\phi$ in $\M$), then the set
$\{\phi^{\M}: \phi \in Fm^L\}$ is a cylindric set algebra of dimension $\alpha$, where $Fm^L$ denotes the set of first order formulas taken in 
the signature $L$.  To see why, we have:
\begin{align*}
\phi^{\M}\cap \psi^{\M}&=(\phi\land \psi)^{\M},\\
^{\alpha}{\M}\sim \phi^M&=(\neg \phi)^{\M},\\
{\sf C}_i(\phi^{\M})&=\exists v_i\phi^{\M},\\
{\sf D}_{ij}&=:(x_i=x_j)^{\M}.
\end{align*}
$\Cs_{\alpha}$ denotes the class of all subalgebras of full set algebras of dimension $\alpha$.
Recall that $\CA_{\alpha}$ stands for the class of cylindric algebras of dimension $\alpha$.
This last (equationally defined class) is obtained from cylindric set algebras by a process of abstraction and is defined by a {\it finite} schema
of equations given in \cite[Definition 1.1.1]{HMT1} that holds of course in the more concrete set algebra.
\begin{definition} By \textit{a cylindric algebra of dimension} $\alpha$, briefly a
$\CA_{\alpha}$, we mean an
algebra
$$ {\A} = \langle A, +, \cdot,-, 0 , 1 , {\sf c}_i, {\sf d}_{ij}\rangle_{\kappa, \lambda < \alpha}$$ where $\langle A, +, \cdot, -, 0, 1\rangle$
is a Boolean algebra such that $0, 1$, and ${\sf d}_{i j}$ are
distinguished elements of $A$ (for all $j,i < \alpha$),
$-$ and ${\sf c}_i$ are unary operations on $A$ (for all
$i < \alpha$), $+$ and $.$ are binary operations on $A$, and
such that the following postulates are satisfies for any $x, y \in
A$ and any $i, j, \mu < \alpha$:
\begin{enumerate}
\item [$(C_1)$] $  {\sf c}_i 0 = 0$,
\item [$(C_2)$]$  x \leq {\sf c}_i x \,\ ( i.e., x + {\sf c}_i x = {\sf c}_i x)$,
\item [$(C_3)$]$  {\sf c}_i (x\cdot {\sf c}_i y )  = {\sf c}_i x\cdot  {\sf c}_i y $,
\item [$(C_4)$] $  {\sf c}_i {\sf c}_j x   = {\sf c}_j {\sf c}_i x $,
\item [$(C_5)$]$  {\sf d}_{i i} = 1 $,
\item [$(C_6)$]if $  i \neq j, \mu$, then
 ${\sf d}_{j \mu} = {\sf c}_i
 ( {\sf d}_{j i} \cdot  {\sf d}_{i \mu}  )  $,
\item [$(C_7)$] if $  i \neq j$, then
 ${\sf c}_i ( {\sf d}_{i j} \cdot  x) \cdot   {\sf c}_i
 ( {\sf d}_{i j} \cdot  - x) = 0
 $.
\end{enumerate}
\end{definition}

${\A}\in \CA_{\omega}$ is {\it locally finite}, if the dimension set of every element $x\in A$
is finite. The dimension set of $x$, or $\Delta x$ for short, is the set
$\{i\in \omega: {\sf c}_ix\neq x\}.$ Locally finite algebras correspond to Tarski--Lindenbaum algebras of (first order) formulas;
in such  algebras the dimension set of (an equivalence class of)  a formula reflects  the number of (finite) set of free variables in this formula.
Tarski proved that every locally finite $\omega$-dimensional
cylindric algebra is representable, i.e. isomorphic
to a subdirect product of set algebra each of dimension $\omega$.

Let $\Lf_{\omega}$ denote the class of
locally finite cylindric algebras.
Let $\RCA_{\omega}$ stand for the class of isomorphic copies
of subdirect products
of set algebras each of dimension $\omega$,
or briefly,
the class of $\omega$ dimensional representable
cylindric algebras. Then Tarski's theorem reads
$\Lf_{\omega}\subseteq \RCA_{\omega}$.
This representation theorem is non-trivial; in fact it is equivalent
to G\"odel's celebrated Completeness Theorem \cite[\S 4.3]{HMT2}.

\subsection{Neat embeddings and Monk's result}

Soon in the development of the subject,
it transpired that the class $\Lf_{\omega}$, the algebraic counterpart
of first order logic,  had
some shortcomings when treated as the
sole subject of research in an autonomous algebraic theory. One such assumption is the fixed dimension $\omega$.
The other is local finiteness.
The restrictive character of these two notions
becomes obvious when we turn our attention to cylindric set algebras.
We find that there are algebras of all dimensions,
and set algebras that are not locally finite
are easily constructed. For these reasons the original conception of a cylindric algebra
was extended. The restriction to dimension $\omega$
and local finiteness were removed,
and the class $\CA_{\alpha}$, of cylindric algebras of dimension $\alpha$,
where $\alpha$ is any ordinal, finite or transfinite, was introduced. The logic corresponding to ${\sf RCA}_{\omega}$,
allowing infinitary predicates, is a more basic algebraizable  (in the standard Blok--Pigozzi sense) 
formalism of $L_{\omega, \omega}$.

Three pillars in the development of the subject, and even one can say {\it the} three pillars in the development of the subject are 
Tarski's result that every locally finite cylindric algebra is representable,
Henkin characterization of  the variety of representable algebras of any dimension
via neat embeddings, in his celebrated Neat Embedding Theorem \cite[Theorem 3.2.10]{HMT2}, and Monk's  proof that the variety of
representable algebras of dimension $>2$ cannot
be axiomatized by a finite schema \cite{Monk}. Monk's result had a shattering effect on the
development of the subject.
Not only did it create employment for many algebraic logicians in the past,
but it still has its enormous repercussions till the very present day, most likely creating
employment for others in the future.

The last two results involve the central notion of neat reducts:
\begin{definition} 
Let  $\alpha<\beta$ be ordinals and $\B\in \CA_{\beta}$. Then the {\it $\alpha$--neat reduct} of $\B$, in symbols
$\Nr_{\alpha}\B$, is the
algebra obtained from $\B$, by discarding
cylindrifiers and diagonal elements whose indices are in $\beta\sim \alpha$, and restricting the universe to
the set $Nr_{\alpha}B=\{x\in \B: \{i\in \beta: {\sf c}_ix\neq x\}\subseteq \alpha\}.$
\end{definition}
Let $\alpha$ be any ordinal.  If $\A\in \CA_\alpha$ and $\A\subseteq \Nr_\alpha\B$, with $\B\in \CA_\beta$ ($\beta>\alpha$), 
then we say that $\A$ {\it neatly embeds} in $\B$, and 
that $\B$ is a {\it $\beta$--dilation of $\A$}, or simply a {\it dilation} of $\A$ if $\beta$ is clear 
from context.  We also say that $\A$ has {\it a neat embedding property} and if $\beta\geq \alpha+\omega$, we say that $\A$ has {\bf the} neat embedding property.
For $\sf K\subseteq {\sf CA}_\beta$, and $\alpha<\beta$, ${\sf Nr}_\alpha{\sf K}=\{\Nr_\alpha\B: \B\in {\sf K}\}\subseteq \CA_\alpha$.

One can show that $\A\in \CA_{\alpha}$ has the neat embedding property 
$\iff \A\in \RCA_{\alpha}\iff \A\in \bold S{\sf Nr}_\alpha\CA_{\alpha+\omega}$, cf. \cite[Theorem 2.6.35]{HMT1}.
The last equivalence is Henkin's celebrated neat embedding theorem.
For $2<n<\omega$, what Monk proved is that for any $k\in \omega$, there is an algebra 
$\A_k\in \bold S{\sf Nr}_n\CA_{n+k}\sim \RCA_n$, 
such that the ultraproduct $\Pi_{k/U}\A_k/U\in \RCA_n$
for any non--principal ultrafilter on $U$.  This implies that $\RCA_n$ is 
not finitely axiomatizable.

If the variety of representable cylindric algebras of dimension at least three had turned out to be 
axiomatized by a finite schema, 
algebraic logic would have evolved along a significantly  different path than it did in the past 
$45$ years, or so. This would have undoubtfully marked the end of the abstract class $\CA_{\alpha}$ $(\alpha$ an ordinal) 
as a separate subject of research;  after all why bother about abstract algebras, if a few nice extra axioms can lead us from those
to concrete algebras consisting of genuine relations, with set theoretic operations uniformy defined over these relations. However, 
due to Monk's non--finitizability result, together with its improvements by various algebraic logicians (from Andr\'eka to Venema) 
$\CA_{\alpha}$ was here to stay and its ``infinite distance' from $\RCA_{\alpha}$, when $\alpha>2$,
became an important research topic.  

{\bf Monk's non-finite axiomatizability result marked the end of an era 
and the beginning of a new one.}

\section{Atom--canonicity, \d\ completions and canonical extensions}

\subsection{Some history}  Let us start by recalling some results from the old era; from Stone's classical representability 
result for Boolean algebras all the way to  
{\it canonical extensions} and {\it completions} of {\it Boolean algebras with operators} 
of which cylindric algebras are a special case; 
a topic initiated by Tarski and 
his student Jonsson.

In 1930 Stone proved Stones representability theorem for Boolean algebras. Stone showed  that every Boolean algebra $\B$ can be embedded into a complete and atomic
Boolean set algebra, namely, the Boolean algebra of the class of all
subsets of the set
of ultrafilters in $\B$. 

This {\it canonical extension} of $\B$ was characterized
topologically 
by Jonsson and Tarski in 1950 but they went further. They developed a theory of Boolean algebras
{\it with operators} (similar in spirit to the theory of groups with
operators), proved
that every Boolean algebra with operators $\B$ can be embedded into a canonical
complete and atomic Boolean algebra with operators $\A$, having the signature 
as $\B$ {\it called the  canonical extension} of $\B$, and they established a number of preservation theorems concerning
equations and
universal Horn sentences that are preserved under the passage from $\B$ to $\A$. In retrospect, the representation theorem of Jonsson and Tarski
is the first completeness theorem for multimodal logic using canonical models.
Jonsson and Tarski concluded that the canonical extension of every abstract relation algebra is a relation algebra 
and similarly the canonical extension of an abstract cylindric algebra is a
cylindric algebra. However, they did not settle the question of whether e.g. the canonical extension of a
{\it representable cylindric algebra} is again representable.  This problem was
settled in the affirmative
sometime in the 1960s, by Monk (unpublished).

MacNeille and Tarski showed in the 1930s that every Boolean algebra has 
another natural complete extension - and in fact it is a minimal complete
extension
that is formed using Dedekind cuts,  now known as the  {\it \d\ completion}. 
Monk developed the theory
of \d\ completions extensions of Boolean algebras with complete operators in analogy with
the Jonsson--Tarski theory of canonical extensions of Boolean algebras
with operators. In particular, Monk  proved an analogous preservation theorem, and
concluded
that the minimal complete extension of every relation and cylindric algebra is again
a cylindric algebra \cite{HMT1}. 
But, unlike the case with canonical extensions, Monk was
unable to settle the question of whether the \d\ 
completion of a representable finite--dimensional
cylindric algebra of dimension $>2$
is representable or not.  

Contrary to canonical extensions (and possibly expectations), this question was finally settled negatively 
by Hodkinson in 1997 \cite{Hodkinson} by showing that for $2<n<\omega$, $\sf RCA_n$ is not {\it atom--canonical} (to be recalled below).
This readily implies that ${\sf RCA}_n$ is not closed under \de\ completions.
This result is substantially generalized in \cite{mlq} by showing that for any $2<n<\omega$, for any 
$k\geq 3$ the variety $\bold S{\sf Nr}_n\CA_{n+k}$ is not atom--canonical giving Hodkinson's aforementioned result when $k=\omega$ as a special case, by observing
that ${\sf RCA}_n=\bold S{\sf Nr}_n\CA_{\omega}$. 
The result in {\it op.cit} will be recalled with a complete proof 
in theorem \ref{can}  where it will be shown 
that for $2<n<\omega$, there is an atomic countable and simple $\A\in \RCA_n$, such that its \d\ completion, 
namely, the {\it complex algebra of its atom structure} is outside $\bold S{\sf Nr}_n\CA_{n+3}$.

\subsection{Atom structures and complex algebras}

We recall the notions of {\it atom structures} and {\it complex algebra} in the framework of Boolean algebras 
with operators of which $\CA$s  are a special case \cite[Definition 2.62, 2.65]{HHbook}. 
The action of the non--Boolean operators in a completely additive (where operators distribute over arbitrary joins componentwise) 
atomic Boolean algebra with operators, $(\sf BAO)$ for short, is determined by their behavior over the atoms, and
this in turn is encoded by the {\it atom structure} of the algebra.

\begin{definition}\label{definition}(\textbf{Atom Structure})
Let $\A=\langle A, +, -, 0, 1, \Omega_{i}:i\in I\rangle$ be
an atomic $\sf BAO$ with non--Boolean operators $\Omega_{i}:i\in I$. Let
the rank of $\Omega_{i}$ be $\rho_{i}$. The \textit{atom structure}
$\At\A$ of $\A$ is a relational structure
$$\langle At\A, R_{\Omega_{i}}:i\in I\rangle$$
where $At\A$ is the set of atoms of $\A$ 
and $R_{\Omega_{i}}$ is a $(\rho(i)+1)$-ary relation over
$At\A$ defined by
$$R_{\Omega_{i}}(a_{0},
\cdots, a_{\rho(i)})\Longleftrightarrow\Omega_{i}(a_{1}, \cdots,
a_{\rho(i)})\geq a_{0}.$$
\end{definition}
\begin{definition}(\textbf{Complex algebra})
Conversely, if we are given an arbitrary first order structure
$\mathcal{S}=\langle S, r_{i}:i\in I\rangle$ where $r_{i}$ is a
$(\rho(i)+1)$-ary relation over $S$, called an {\it atom structure}, we can define its
\textit{complex
algebra}
$$\Cm(\mathcal{S})=\langle \wp(S),
\cup, \setminus, \phi, S, \Omega_{i}\rangle_{i\in
I},$$
where $\wp(S)$ is the power set of $S$, and
$\Omega_{i}$ is the $\rho(i)$-ary operator defined
by$$\Omega_{i}(X_{1}, \cdots, X_{\rho(i)})=\{s\in
S:\exists s_{1}\in X_{1}\cdots\exists s_{\rho(i)}\in X_{\rho(i)},
r_{i}(s, s_{1}, \cdots, s_{\rho(i)})\},$$ for each
$X_{1}, \cdots, X_{\rho(i)}\in\wp(S)$.
\end{definition}
It is easy to check that, up to isomorphism,
$\At(\Cm(\mathcal{S}))\cong\mathcal{S}$ alway. If $\A$ is
finite then of course
$\A\cong\Cm(\At\A)$. 
For algebras $\A$ and $\B$ having the same signature expanding that of Boolean algebras, 
we say that $\A$ is dense in $\B$ if $\A\subseteq \B$ and for all non--zero $b\in \B$, there is a non--zero 
$a\in A$ such that $a\leq b$.

An atom structure will be denoted by $\bf At$.  An atom structure $\bf At$ has the signature of $\CA_\alpha$, $\alpha$ an ordinal, 
if  $\Cm\bf At$ has the signature of $\CA_\alpha$, in which case we say that $\bf At$ is an $\alpha$--dimensional atom structure. 

\begin{definition}\label{canonical} 
(a) Let $V$ be a variety of $\sf CA_n$s. Then $V$ is {\it atom--canonical} if whenever $\A\in V$ and $\A$ is atomic, then $\mathfrak{Cm}\At\A\in V$.

(b) The {\it  \d\ completion} of  $\A\in \CA_n$, is the unique (up to isomorphisms that fix $\A$ pointwise)  complete  
$\B\in \CA_n$ such that $\A\subseteq \B$ and $\A$ is {\it dense} in $\B$.
\end{definition}

If $\A\in \CA_n$ is atomic,  then 
$\mathfrak{Cm}\At\A$ 
is the {\it \d\ completion of $\A$.}
If $\A\in \CA_n$, then its atom structure will be denoted by $\At\A$ with domain the set of atoms of $\A$ denoted by $At\A$.
By the same token for a class $\sf L\subseteq \CA_n$ consisting of atomic algebras, we denote 
the class of first order structures $\bold K=\{\At\A: \A\in {\sf L}\}$ by $\At(\sf L)$, so that  if ${\bf At}\in \bold K$, 
then there exists $\A\in \sf L$, such that $\At\A=\bf At$. 

We deal only with atom structure having the signature of $\CA_n$.
{\it Non atom--canonicity} can be proved 
by finding {\it weakly representable atom structures} that are not {\it strongly representable}.

\begin{definition} Let $\alpha$ be an ordinal. An atom structure $\bf At$ of dimension $\alpha$ is {\it weakly representable} if there is an atomic 
$\A\in \RCA_{\alpha}$ such that $\At\A=\bf At$.  The atom structure  $\bf At$ is {\it strongly representable} if for all $\A\in \CA_{\alpha}$, 
$\At\A=\bf At \implies \A\in {\sf RCA}_{\alpha}$.
\end{definition}
It can be proved for $\CA_{\alpha}$s, where $\alpha$ is any ordinal, that $\At\A$ is weakly representable $\iff$ the {\it term algebra}, in symbols 
$\Tm\At\A$ is representable.
 $\Tm\At\A$ is the subalgebra of $\Cm\At\A$ generated by the atoms. 
On the other hand, $\At\A$ is strongly representable $\iff$ $\Cm\At\A$ is representable.
Fix $2<n<\omega$. These two notions (strong and weak representability) do not coincide for cylindric algebras as proved by Hodkinson \cite{Hodkinson}.
In theorem \ref{can}, we generalize Hodkinson's result by showing that there are two atomic $\CA_n$s sharing the same atom structure, one is representable and the other is even outside 
$\bold S{\sf N}r_n\CA_{n+3}(\supsetneq \RCA_n$). In particular, there is an algebra outside $\bold S{\sf Nr}_n\CA_{n+3}$ having a dense representable 
subalgebra.

\section{Monk algebras, the good and the bad}

Graphs will be frequently used in what follows. We recall some basics. A \textit{(directed) graph} is a set $G$ (of \textit{nodes} or
\textit{vertices}) endowed with a binary relation $E$, the edge
relation. A pair $(x, y)$ of elements of $G$ is said to be an
\textit{edge} if $xEy$ holds. A directed graph is said to be
{\it complete} if $(x, y)$ is an edge for all nodes $x, y$. A graph is
said to be {\it undirected} if $E$ is symmetric and irreflexive. An
undirected graph is {\it complete} if $(x, y)$ is an edge for all {\it distinct}
nodes $x, y$.  Finite ordinals can (and will) 
be viewed as complete irreflexive graphs the obvious way, cf. theorem \ref{can}.

A \textit{clique} in an undirected graph with set of
nodes $G$ is a set $C \subseteq G$ such that each pair of distinct
nodes of $C$ is an edge.

\begin{definition}\label{graph} Let $\G=(G, E)$ be an undirected graph ($G$ is the set of vertices and $E$ is an ireflexive symmetric 
binary relation on $E$), and
$C$ be a non-empty set of `colours'.
\begin{enumerate}

\item A subset $X\subseteq G$ is said to be an {\it independent set} if $(x,y)\in E$
for all $x,y\in X$.

\item A function $f:G\to C$ is called a {\it $C$ colouring} of  $\G$ if
$(v,w)\in E$ implies that $f(v)\neq f(w)$. 

\item The {\it chromatic number} of $\G$, denoted by $\chi(\G),$ is
the size of the smallest finite set $C$ such that there exists
a $C$ colouring of $\G$, if
such a $C$ exists, otherwise $\chi(\G)=\infty.$

\item A {\it cycle} in $\G$ is a finite sequence $\mu=(v_0,\ldots v_{k-1})$ (some $k\in \omega$) of
distinct nodes, such that $(v_0, v_1), \ldots (v_{k-2}, v_{k-1}), (v_{k-1}, v_0) \in E$. The {\it length} 
of such a cycle is $k.$

\item The {\it girth} of $\G$, denoted by $g(\G)$, is the
length of the shortest cycle
in $\G$ if $\G$ contains cycles, 
and $g(\G)=\infty$ othewise.
\end{enumerate}
\end{definition}

\subsection{From good to bad}

Fix $2<n<\omega$. Because $\RCA_n$ is a variety, an atomic algebra $\A\in \RCA_n\iff$ 
all equations axiomatizing $\RCA_n$ holds in $\A$.
From the point of view of  $\At\A$, the atom structure of $\A$, each equation in the signature of $\RCA_n$ 
corresponds to a certain universal monadic second--order statement, 
in the signature of $\At\A$ where the universal quantifiers are restricted 
to ranging over the set of atoms that lie below elements of $\A$. Such a statement fails  $\iff \At\A$  is partitioned 
into finitely many $\A$--definable sets (sets definable using atoms of $\A$ as parameters) with certain 
`bad' properties. Call this a {\it bad partition}. 
A bad partition of a graph is a 
{\it finite colouring}, namely, a partition of its sets of nodes into finitely many independent sets. 
A typical Monk argument  is to construct for finite dimension $>2$ a sequence of non--representable algebras based on graphs with bad partitions
(having finite chromatic number) converging to one that is based on a graph having {\it infinite chromatic number},
hence, representable. 
It follows immediately that the variety of representable algebras of dimension $>2$ 
is not finitely axiomatizable.

We call the non--representable algebras in the sequence {\it bad} Monk algebras. Based on 
graphs  having finite chromatic number, they are not representable as subdirect products of set algebras.  This is proved by an application
of  Ramsey's theorem. But these bad algebras converge to a {\it good} 
infinite Monk algebra that is based on a graph that has
infinite chromatic number. 
This (limit) algebra is {\it good in the sense that it permits a representation}; it is a subdirect product of set algebras.
We borrow the terminolgy `bad and good' from \cite{strongca} where such notions are applied to 
the {\it graph
used in the construction of the Monk algebra}. A  graph is {\it good} if it has infinite chromatic number, othewise 
it is, as mentioned above, bad, that is, it has a finite colouring.

Constructing algebras based on graphs having arbitrarily large chromatic number converging to one that is based
on a graph having infinite chromatic number seems a plausible thing to do, and
indeed it can be done, witnessing non--finite axiomatizability
of the class of representable algebras of finite dimension $>2$. Monk's original proof  \cite{Monk}, refined by Maddux \cite{Maddux} 
can be seen this way. Monk used {\it finite} bad Monk algebras converging to a good {\it infnite one}.
The idea involved in the construction of a bad finite Monk algebra is not so hard.
Such algebras are finite, hence atomic, more precisely their Boolean reducts are atomic.
The atoms are given colours, and
cylindrifications and diagonals are defined by stating that monochromatic triangles
are inconsistent. If a Monk's algebra has many more atoms than colours,
it follows from Ramsey's Theorem that any representation
of the algebra must contain a monochromatic triangle, so the algebra
is not representable.

\subsection{From bad to good} 

Conversely, one can form an  {\it anti--Monk ultraproduct},
of  a sequence $(\A_i: i\in \omega)$ of {\it good} infinite Monk algebras 
(based on graphs with infinite chromatic number) converging to an infinite bad atomic algebra $\A$, namely, one that
is based on a graph that is
{\it only $2$--colourable} \cite{strongca}. This last algebra is representable, but only {\it weakly}.
This means that its subalgebra generated by the atoms is representable, but its \d\ minimal completion, 
which is the complex algebra of its atom structure,  namely, $\Cm\At\A$, 
is not. In this sense, this limit $\A$ is not {\it strongly} representable. But every element of the sequence $\A_i$ $(i\in \omega)$ is strongly representable, in the same 
sense, meaning that $\Cm\At\A_i$ is representable,
The aforementioned technique of proof, showing that the class of strongly representable cylindric algebras is not elementary
is due to Hirsch and Hodkinson \cite{strongca}.

The {\it ultraproduct} that Monk used in his 1969 seminal result is a `reverse' to this one,
and is more intuitive, since as indicated it is plausible that a sequence of graphs having arbitrarily finite chromatic numbers getting larger and larger, 
converges to one that has
infinite chromatic number, but the `other way round' is hard to visualize, 
let alone implement.  So how did Hirsch and Hodkinson prove their result?
Fix $2<n<\omega$. Recall that a $\CA_n$  atom structure $\bf At$ {\it strongly representable} $\iff$ the complex algebra $\Cm\bf At$ is representable.
So that an atomic algebra $\A$ is strongly representable (in the sense that $\Cm\At\A\in {\sf RCA}_n$) $\iff$ its atom structure is strongly representable.
One shows that an atom structure is strongly representable $\iff$ it has {\it no bad partition using any sets at all}. So, here, we want to find atom structures, with no bad partitions, 
with an ultraproduct that {\it does have a bad partition.}

From a graph $\Gamma$, one can create an atom structure that is strongly representable $\iff$ the graph is good, namely,
it has has no finite colouring; this atom structure is denoted by $\rho(\mathfrak{I}(\Gamma))$ in \cite{HHbook2} 
and its complex algebra $\Cm(\rho(\Gamma))$ is denoted by $\mathfrak{M}(\Gamma)$ in \cite[Proposition 3.6.8]{HHbook2}. 
So the problem reduces to finding a sequence of graphs with no finite colouring, 
with an ultraproduct that does have a finite colouring. We want graphs of {\it infinite chromatic numbers}, having an ultraproduct
with {\it finite chromatic number}. It is not obvious, {\it a priori}, that such graphs actually exist.
And here Erdos' probabilistic methods offer solace. Graphs like this can be found using the probabilistis methods of Erd\"os, for those methods
render finite graphs of arbitrarily large chromatic number and girth \cite{Erdos, strongca}. 
By taking disjoint unions, we obtain graphs of infinite chromatic number (no bad partitions) and arbitarly large girth. A non--principal 
ultraproduct of these has no cycles, so has chromatic number 2 (bad partition), witness the proof of \cite[Corollary 3.7.2]{HHbook2}.

{\bf Both constructions from `bad to good' and from `good to bad' will be encountered in some detail in \S 9.} 

\section{Further repercussions of the seminal result of Monk back in 1969 and its refinements; two complementary 
paths}

Monk's seminal result proved in 1969 \cite{Monk}, showing that the class of representable cylindric algebras
of any dimension $>2$ is not finitely axiomatizable, had a
shattering effect on  algebraic logic, in many respects. In fact, it changed the history of the development
of the subject, and inspired immensely fruitful research, that involved dozens of publications due to many (algebraic) logicians,
starting from Andr\'eka \cite{Andreka} all the way to Venema \cite{v}.
The conclusions drawn from this result, were that either the
extra non--Boolean basic operations of cylindrifiers and diagonal elements were, due to possibly a historical accident,  not properly chosen, or that the notion of
representability was inappropriate; for sure it was concrete enough, but perhaps this is precisely the reason; it is far {\it too concrete.}
Research following both paths, either by changing the signature or/and altering the notion of concrete representability have been pursued
ever since, with amazing success.  Indeed there are  two conflicting but complementary facets
of such already extensive  research referred to in the literature, as `attacking the representation problem'.
One is to sharpen Monk's result proving {\it negative} (non--finite axiomtizability) results,  
the other is try to avoid it, proving {\it positive} results.

In the first path one delves deeply in investigating the complexity of potential axiomatizations for existing varieties
of representable algebras.
Splitting techniques  \cite{Andreka} proved efficient to show {relative non--finite axiomatizability results} in the following sense:
Let $\K$ be a variety having signature $t$, and let $\sf L$ be a variety having signature $t'\subseteq t$, such that if 
$\A\in \K$ then the reduct of $\A$ obtained by discarding the operations in $t\sim t'$, $\Rd_{t'}\A$ for short,  is in $\sf L$.
We say that a set of first order formulas
$\Sigma$ in the signature $t$ {\it axiomatizes $\K$ over $\sf L$}, if  
for any algebra $\A$ in the signature $t$ whenever 
$\A\models \Sigma$ and $\Rd_{t'}\A\in \sf L$, then $\A\in \K$. This means that $\Sigma$ `captures' the properties of the operations in $t\sim t'$. 
A {\it relative non--finite axiomatizability result} is typically of the form:  There is no set `of a special form' of 
first order formulas satisfying a `finitary condition' that axiomatizes
$\K$ over $\sf L$.  Such special forms may be equations, or universal formulas. By finitary, we exclusively mean that $\Sigma$ is finite (this makes no sense if the signature at hand 
is infinite), or $\Sigma$ is a finite schema in the sense of Monk's schema \cite{Monk}, \cite[Definition 5.6.11-5.6.12]{HMT2},  
or $\Sigma$ contains only finitely many variables. The last two cases apply equally well to varieties having infinite signature
like $\RCA_{\omega}$.

A typical result of the last form is that for $2<n<\omega$, there is no set of universal formulas containing 
only finitely many variables that axiomatizes 
the variety of representable (quasi--) polyadic equality algebras of dimension $n$ over the variety 
of representable (quasi--)polyadic equality algebras of the same dimension \cite{Andreka}, a result that can lifted 
to the transfinite. We give (more than) an outline of  the proof in the infinite dimensional case deferred to an appendix. 

\subsection{Second path}

The second path is to try and sidestep
such wild unruly complex axiomatizations of $\RCA_{\alpha}$ for $\alpha>2$, often referred to as {\it taming methods}.
Those taming methods can either involve passing to (better behaved) expansions of the algebras considered,
or even completely change the signature  baring in mind that the essential operations like cylindrifiers are
term definable, or else change the very  notion of representability involved, as long as it remains concrete enough.
The borderlines are difficult to draw, we might not know what is {\it not} concrete enough, but we can
judge that a given representability notion is satisfactory, once we have one.
This type of investigations, when one seeks to find a variety of `representable algebras'  that is {\it finitely axiomatizable by a recursive set of equations},
is known in the literature as the {\it finitizability problem} \cite{n, Bulletin, Sain, mlq}, to be dealt with below in some detail.

To get a grasp of how  difficult the {\it representability problem for $\CA_{\alpha}$s seemed to be in the late sixties of the last century
we quote  Henkin, Monk and Tarski \cite[pp.416]{HMT1} 
addressing the problem for the time being from the `algebra side':

`{\it There are two outstanding open problems, one of them is the problem of providing a simple
intrinsic characterization for all representable $\CA$s, the second problem is to find a notion
of representability for
which a general representation theorem could be obtained which
at the same time would be close to geometrical representation in the concrete character and intuitive simplicity.
It is by no means clear that a satisfactory solution
of either of these problem will ever be found or
that a solution is possible'!} (Our exclamation mark).}

{\bf Classical (Tarskian) representation:} Let $\alpha$ be an ordinal. Recall that ${\sf Cs}_{\alpha}$ denotes the class of {\it set algebras of dimension $\alpha$}, that is, 
algebras with top elements a {\it cartesian square}; a set of   
the form $^{\alpha}U$ for some non--empty set $U$. When $\alpha<\omega$, set algebras are simple, and the class
$\sf RCA_{\alpha}$ is by definition ${\bf SP}\Cs_{\alpha}$.
The definition of representability, without any change 
in its formulation, is extended to algebras of infinite dimension. 

In this case, however, an intuitive justification is less clear
since cylindric set algebras of infinite dimension 
are not in general subdirectly indecomposable. 
In fact, for $\alpha\geq \omega$
no intrinsic property is known which singles 
out the algebras isomorphic  to ${\sf Cs}_{\alpha}$s
among all representable $\CA_{\alpha}$'s, as opposed to the finite dimensional case
where such algebras can be intrinsically characterized by the property of being simple; 
for $\alpha<\omega$, $\A\in \sf RCA_{\alpha}$ is simple $\iff\A\in \bold I{\sf Cs}_{\alpha}$ where $\bold I$ denotes the operation of closing under isomorphic copies.

But in any case (even for $\alpha\geq \omega)$, members of $\RCA_{\alpha}$
can be still represented as algebras consisting of genuine 
$\alpha$--ary relations over
a disjoint union of Cartesian squares, the class consisting of all such algebras
is denoted by ${\sf Gs}_{\alpha}$, with ${\sf Gs}$ standing for 
{\it generalized set algebras}; these are algebras whose top elements 
are {\it disjoint unions} of cartesian squares (of dimension $\alpha$). 

The class of generalized cylindric set algebras, 
just as that of ordinary cylindric set algebras,
has many features that make it well qualified to represent
$\CA_{\alpha}$. The construction of the algebras in this (bigger) class 
retains its concrete character, 
all the fundamental operations and distinguished elements
are unambiguously defined in set-theoretic terms, and the definitions are uniform
over the whole class; 
geometric intuition underlying the construction 
gives us good insight into the structures of the algebras.
Thus there is (geometric) 
justification that 
${\sf Gs}_{\alpha}$ consists of the standard models
of $\CA$--theory.  Its members consist of {\it genuine $\alpha$--ary} relations, and the operations are {\it set--theoretically concretely 
defined} utilizing the form of these relations as sets 
of sequences. But contrary to what was hoped for, by the results of Andr\'eka and Monk \cite{Monk, Andreka} any axiomatization of this class of
(representable) algebras has be infinite and extremely complex if $\alpha>2$. The axioms stipulated by Tarski were not enough to obtain a Stone like representability result.
{\it Infinitely many} more axioms were needed.
We will see later in \S7, cf. theorem \ref{c},  
that removing the condition of {\it disjointness} is highly rewarding, 
in the context of the so--called finitizability problem, where we can (and will) obtain a Stone--like representability result expressed 
via a finite set of equations enforcing representability.
The last procedure is an instance of the technique of {\it relativization}.

{\bf Relativization:} One can find well motivated appropriate notions of semantics (a notion of representation) by first locating them while giving up classical semantical prejudices.
It is hard to give a precise mathematical underpinning to such intuitions. What really counts at the end is a completeness 
theorem stating a natural fit between chosen intuitive concrete-enough, but perhaps not {\it excessively} concrete,
semantics and well behaved, hopefully recursive, axiomatizations.
Go\"dels completeness theorem ties just one choice of predicate logical validity in `standard set theoretic modelling'.  
One could be impressed by the beauty of ensnaring `intuitive validity' by means of exact 
mathematical notions.  But on the other hand, one can argue that 
from equally natural semantic points
of view, other logical equilibria
arise with different sets of validities.

Fix the dimension $n$ to be finite $>1$. A technique that proved extremely potent  in obtaining positive results in both algebraic and modal logic 
is that of {\it relativization}, the slogan being {\it relativization turns negative results positive}. The technique originated 
from research by Istvan N\'emeti  dedicated to the class
of {\it relativized set algebras} whose top elements are arbitrary sets of $n$--ary sequences, $n<\omega$, that are 
not necessarily {\it squares} of the form $^nU$, where $U$ a non--empty set and $n<\omega$ is the dimension.
So the top element of a relativized set algebra of dimension $n$ is an {\it arbitrary subset}  $V$ of ${}^nU$ with operations defined like cylindric set algebras of dimension $n$, but 
relativized to $V$. 
N\'emeti proved, in a seminal result,  that the universal theory of such algebras is decidable.
From the modal perspective, such top elements are  called {\it guards} `guarding the semantics'.
The corresponding multimodal
logic exhibits nice modal behaviour and is regarded as the base for
proposing the so--called {\it guarded fragments} of first order logic by Andr\'eka et al. \cite{v}. 
Relativized semantics  has led to many other nice modal
fragments of first order logic, like the 
loosely guarded, 
clique guarded and packed fragments of first order logic \cite{v}, \cite[Definitions 19.1, 19.2, 19.3,  p. 586-589]{HHbook}.
The most severe relativization is that one deals with set algebras whose top element $V$ 
is an arbitrary set of $n$--ary sequences and operations defined like cylindric set algebra 
of dimension $n$ relativized to $V$ (dealing with the class $\sf Crs_n$).
Less severe relativizations are obtained by imposing certain closure properties on the top element $V$ 
without losing the positive properties of ${\sf Crs_n}$ like e.g. the decidability of its equational theory. 
We say that $V\subseteq {}^nU$ is {\it locally square} if whenever $s\in V$ and
$\tau: n\to n$, then $s\circ \tau\in V$. Let ${\sf G}_n$ be the class of set 
algebras whose top elements are locally square and operations are defined like cylindric set algebra 
of dimension $n$ relativized to the top element $V$, together with with the unary substitution operators denoted by 
${\sf S}_{[i, j]}$ $(i, j<n)$, where for $X\subseteq V$, then ${\sf S}_{[i,j]}X=\{s\in V: s\circ [i, j]\in V\}$, where $[i, j]$ is the transposition that swaps 
$i$ and $j$. So ${\sf G}_n$ has the same signature as {{\it  polyadic equality algebras of dimension $n$}.

It is proved in \cite{f} that the class ${\sf G}_n$ is a finitely axiomatizable variety obtaining a Stone--like representability 
result for algebras of $n$--ary relations. In \cite{mlq} this result is reproved using games. 
A precursor to the result in \cite{f} is the classical Andr\'eka--Resek--Thompson 
result \cite{AT} which says that every $n$--dimensional algebra that has the same signature as $\CA_n$, satisfying a certain finite set of equations together
with the the so--called {\it merry go round identies}, briefly, {\sf MGR} is representable by set algebras whose
top elements are {\it diagonizable} in the following sense: 
If $V$ ($\subseteq {}^nU$)  is the top element of  a given set algebra
of dimension $n$, then $s\in V\implies s\circ [i|j]\in V$ where $[i|j]$ is the replacement that sends $i$ to $j$ 
and keeps everything else fixed. As before, the operations are like those of cylindric set algbras of dimension
$n$  relativized to $V$.
The procedure of weakening the threatening condition of commutativity of cylindrifiers {\it globally}, like in the case of {\it several 
relativized set algebras including 
${\sf Crs}_n$ and ${\sf G}_n$}
($1<n<\omega$)  (thereby guarding semantics)
proved highly rewarding. In \cite{mlq} it is shown that the universal theories of such varieties is decidable.  
This technique of relativization also works for infinite dimensions as will be shown in theorem \ref{c}.

In all cases finite and infinite dimensions, one can obtain positive results on finite axiomatizabily, atom--canonicity, 
canonicity, and 
complete representations 
for the hitherto obtained varieties of representable (modal) algebras, cf. \cite{mlq}.
So far, decidability of the universal theory of the variety of representable algebrasis known only for finite dimensions as shown next.
Recall that $\mathfrak{B}(V)$ is the Boolean algebra $(\wp(V), \cup, \cap, \sim, \emptyset, V)$.
We denote the the class of set algebras of the form
$(\mathfrak{B}(V), {\sf C}_i, {\sf D}_{ij})_{i, j<n}$ where $V$ is diagonalizable by ${\sf D}_n$ and that consisting of algebras of the form 
$(\mathfrak{B}(V), {\sf C}_i, {\sf D}_{ij}, {\sf s}_{[i,j]})_{i, j<n}$ where $V$ is locally square by ${\sf G}_n$.
\begin{theorem}\label{m}
\cite{AT, f, Fer, ans}.  
Fix  $n>1$. Then  ${\sf D}_n$ and ${\sf G}_n$ are varieties that are axiomatizable by a finite schemata. 
In case $n<\omega$, both varieties are finitely axiomatizable and have a decidable universal (hence equational) theory.
\end{theorem}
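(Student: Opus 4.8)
The plan is to split the statement into a representation theorem (that ${\sf D}_n$ and ${\sf G}_n$ are finitely schema--axiomatizable varieties) and a finite model property (yielding decidability when $n<\omega$). Soundness is the routine direction. One writes down the candidate finite schema: for ${\sf D}_n$ a finite set of $\CA_n$--type equations---the Boolean axioms, $(C_1)$--$(C_3)$, $(C_5)$--$(C_7)$, with the commutativity axiom $(C_4)$ suitably weakened---together with the merry--go--round identities {\sf MGR}; for ${\sf G}_n$ one adjoins the equations governing the substitution operators $\swap{i}{j}$, so that the signature becomes that of polyadic equality algebras. One then checks by direct computation that every concrete diagonalizable (resp. locally square) set algebra validates the schema. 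The conceptual point is that on a merely diagonalizable or locally square top element $V$ the relativized cylindrifications need not commute, so full commutativity of cylindrifiers cannot appear among the axioms; it is exactly this weakening that will buy decidability below.

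The heart of the matter is the converse: every abstract $\A$ satisfying the schema is isomorphic to a concrete set algebra of the required kind. I would fix a single top element $V\subseteq {}^nU$ and build it by a step--by--step, equivalently a games, argument (alternatively one may route the representation through a neat embedding $\A\subseteq\Nr_n\B$ into a higher--dimensional dilation and read off $V$ from $\B$). One enumerates the defects to be cured---elements lying below a cylindrification that lack a witness, and diagonals or substitutions that must be realized---and cures them one at a time, at each stage extending a finite partial representation, i.e. a coherent finite set of $n$--tuples labelled by elements of $\A$. The function of {\sf MGR} is precisely to guarantee coherence under the replacements and transpositions that generate the closure condition on $V$: they ensure the substitution operators interact correctly, so that after adding a witness the top element can be closed under $s\mapsto s\circ[i|j]$ (for ${\sf D}_n$) or under $s\mapsto s\circ\tau$ (for ${\sf G}_n$) without producing a contradiction. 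The limit $V$ is then diagonalizable (resp. locally square) by construction, and $a\mapsto\{s\in V: s \text{ realizes } a\}$ is the desired isomorphism. For infinite $n$ the same schema, now instantiated over all indices $<n$, does the job, giving a finite schema in Monk's sense.

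For the decidability clause fix $n<\omega$ and a universal sentence $\varphi$; I would establish a finite model property with a computable bound. Because cylindrifiers are required to behave only locally, refuting $\varphi$ uses only finitely much of a representation: from a failing assignment one extracts a finite top element, obtained by closing a finite set of tuples under the finitely many replacements (resp. transpositions), in which $\varphi$ still fails, of cardinality bounded by a computable function of $\varphi$ and $n$. One then enumerates all members of the class up to that size and tests $\varphi$ on each, deciding validity. Since equations are a special case of universal sentences, the equational theory is decidable a fortiori. (One could alternatively invoke the decidability of the clique--guarded and packed fragments of first order logic, to which these relativized semantics correspond.)

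The main obstacle is the coherence bookkeeping in the step--by--step construction, which is concentrated entirely in {\sf MGR}. Dropping full commutativity of the cylindrifiers is what makes the construction close up locally, and hence yields both the representation and, for finite $n$, the size bound; but the price is that one must verify that curing a cylindrifier defect never reopens a diagonal or substitution defect already cured, and it is the merry--go--round identities---rather than any single isolated equation---that encode exactly the compatibility required for this. Carrying out this amalgamation faithfully, and extracting from it a genuinely \emph{computable} bound uniform in $\varphi$, is where the real effort lies; the soundness direction and the Boolean and diagonal manipulations are routine by comparison.
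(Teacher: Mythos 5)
Be aware first of what the paper actually undertakes to prove here: the axiomatizability claims are simply cited from \cite{AT, f, Fer, ans}, and the only part argued in the text is decidability of the universal theory of ${\sf G}_n$, which is obtained by translating each quantifier--free formula $\psi(\bar a)$ of the signature of ${\sf G}_n$ into a \emph{loosely guarded} first order sentence $\tau_{\A}(\psi(\bar a))$ over a signature with one $n$--ary relation symbol per element of $\A$, evaluated in a relativized representation $M$; decidability is then inherited from the known decidability of the loosely guarded fragment. Your proposal takes a genuinely different route for this clause: you aim at a finite model property with a computable size bound and then decide $\varphi$ by exhaustive search. That route is indeed the one behind the ``finite algebras on finite bases'' results of \cite{ans}, and it would buy slightly more (an explicit bounded--search decision procedure rather than a reduction to a black--box decidability result); the paper's translation, by contrast, requires no combinatorics at all once the loosely guarded fragment is taken as given.

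The difficulty is that your sketch of the finite model property contains a real gap at exactly the step that carries the theorem. You claim that from a failing assignment one extracts a finite top element ``obtained by closing a finite set of tuples under the finitely many replacements (resp.\ transpositions), in which $\varphi$ still fails, of cardinality bounded by a computable function of $\varphi$ and $n$.'' Closing under the substitutions alone is harmless, but it does not produce a member of ${\sf D}_n$ or ${\sf G}_n$ on which the algebraic structure is preserved: to keep the map $a\mapsto\{s\in V: s\text{ realizes }a\}$ a homomorphism you must also retain witnesses for every cylindrification ${\sf c}_i a$ that is realized, and each such witness is a \emph{new} base point, which under closure by the transpositions generates new tuples and hence new cylindrifier defects. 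Nothing in the argument as written shows this process terminates, let alone within a computable bound; establishing that it does (via mosaics, filtration/quasi--models, or Herwig--style extension theorems) is precisely the content of the cited finite base property results and cannot be dismissed as bookkeeping. Your parenthetical fallback --- invoking decidability of a guarded fragment --- is essentially the paper's actual proof (with the loosely guarded fragment, not the clique--guarded or packed ones, which the paper reserves for the $m$--flat semantics of \S 7.1), so the clean repair is to promote that remark to the main argument and supply the inductive translation of terms and quantifier--free formulas.

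Your outline of the representation half (step--by--step curing of defects, with {\sf MGR} guaranteeing coherence under the closure of $V$ by replacements or transpositions) is consistent with the proofs in \cite{AT, Fer} that the paper cites without reproving, so no comparison with the paper is possible there; but note that as stated it is an outline of a strategy, not a proof, and the paper does not ask you to supply one.
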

\begin{proof} 
We prove only 
decidability of the universal theory of ${\sf G}_n$. The proof depends on the decidability
of the loosely guarded fragment of first order logic.
 For $\A\in {\sf G}_n$,   let $\L(\A)$ be the first order signature consisting of an
$n$--ary relation symbol for each element
of $\A$. 
Then we show that for every $\A\in {\sf G}_n$, for any  $\psi(x)$ a quantifier free formula of the signature of $\sf G_n$
and $\bar{a}\in \A$ with $|\bar{a}|=|\bar{x}|$, there is a loosely guarded $\L(\A)$
sentence $\tau_{\A}(\psi(\bar{a}))$ whose relation symbols are among $\bar{a}$
such that for any relativized representation $M$ of $\A$,
$\A\models \psi(\bar{a})\iff M\models \tau_{\A}(\psi(\bar{a}))$. 

Let $\A\in \sf G_n$ and 
$\bar{a}\in \A$. We start by the terms. Then by induction we complete the translation to quantifier free formulas.
For any tuple $\bar{u}$ of distinct $n$ variables, and term $t(\bar{x})$ in the signature
of $\sf G_n$,
we translate $t(\bar{a})$ into a loosely guarded formula
$\tau_\A^{\bar{u}}(t(\bar{a}))$ 
of the first order language having signature
$L(\A)$.
If $t$ is a  variable, then $t(\bar{a})$ is $a$ for some $a\in \rng(\bar{a})$, and we
let $\tau_\A^{\bar{u}}(t(a))=a(\bar{u}).$
For ${\sf d}_{ij}$ one sets
$\tau_{\A}^{\bar{u}}(t)$
to be ${\sf d}_{ij}^{\A}(\bar{u})$ and the constants $0$ and $1$ are handled analogously.
Now assume inductively that $t(\bar{a})$ and $t'(\bar{a})$
are already translated.
We suppress $\bar{a}$ as it plays no role here. For all $i, j<n$ and $\sigma:n\to n$, define
(for the clause ${\sf c}_i$,  $w$ is a new variable):
\begin{align*}
\tau_{\A}^{\bar{u}}(-t)&=1(\bar{u})\land \neg \tau_{\A}^{\bar{u}}(t),\\
\tau_{\A}^{\bar{u}}(t+t')&=\tau_{\A}^{\bar{u}}(t)+\tau_{\A}^{\bar{u}}(t),\\
\tau_{\A}^{\bar{u}}({\sf c}_it)&=1(\bar{u})\land \exists w[(1(\bar{u}^i_w) \land \tau_{\A}^{{\bar{u}^i_w}}(t)],\\
\tau_{\A}^{\bar{u}}({ \s}_{\sigma} t)&= 1(\bar{u}) \land(\tau_{\A}^{\bar{u}\circ\sigma}(t)).
\end{align*}
Let $M$ be a relativized representation of $\A$,
then $\A\models t(\bar{a})=t'(\bar{a})$
$\iff M\models \forall\bar{u} [\tau_{\A}^{\bar{u}}(t(\bar{a})) \longleftrightarrow  \tau_{\A}^{\bar{u}}(t'(\bar{a}))].$
For terms $t(\bar{x})$ and $t'(\bar{x})$ and $\bar{a}\in \A$, choose pairwise distinct variables
$\bar{u}$, that is for $i<j<n$, $u_i\neq u_j$
and define
$\tau_{\A}(t(\bar{a})=t'(\bar{a})): = \forall \bar{u} [1(\bar{u})\to (\tau_{\A}^{\bar{u}}(t(\bar{a}))\longleftrightarrow
\tau_{\A}^{\bar{u}}(t'(\bar{a})))].$
Now extend the definition to the Boolean operations as expected, thereby completing the translation of any quantifier free formula
$\psi(\bar{a})$ in the signature of $\sf G_n$  to the $L(\A)$ formula 
$\tau_{\A}(\psi(\bar{a}))$.

Then it is easy to check that, for any
quantifier free formula $\psi(\bar{x})$ in the signature of $\sf G_n$ 
and $a\in \A$, we have:
$$\A\models \psi(\bar{a})\iff M\models \tau_{\A}(\psi(\bar{a})),$$
and the last is a loosely guarded $\L(\A)$  sentence.
By decidability of the loosely guarded fragment
the required result follows. 
\end{proof}

{\bf Legitimacy of relativization:} A mathematical theory of `meaning' leads 
to new insights and clarifications. Virtually all logics considered by logicians permit a semantical approach. 
In some cases, such as intuitionistic logic, there are philosophical reasons to prefer one semantics over another. 
But research in algebraic logic has shown that this is the case too with {\it  classical and modal logic}, and furthermore, in these two cases, 
the motivation of altering semantics  is more often than not {\it more basic and practical}.  
The move of altering semantics (the most famous is Henkin's move changing second order semantics)
has radical philosophical repercussions, taking us away from the conventional
Tarskian semantics captured by G\"odel--like axiomatization. The latter completeness proof is effective
but highly undecidable; and this property is inherited by finite variable fragments
of first order logic when the number of variables present in the signature is at least three, as long as we insist on Tarskian (square) semantics.

For higher order logics  Henkin's approach uses general models with restricted ranges of quantification. 
Standard models remain a limiting case where all mathematically possible sets
are present. This makes higher order logic many sorted first order logic treating
individual, sets and predicates a par.
The move from higher order to first order brings clear gains in complexity. But though effectively axiomatizable predicate
logic is undecidable by Church's theorem. We can gain more by {\it relativization}. By relativization  we experience immediate practical benefits,  
obtaining quite expressive {\it decidable} 
quantifier logics. This dynamic viewpoint enriches and unifies our view of a mutiplicity of disciplines 
sharing a cognitive slant.
Standard predicate logic has arisen historically by making several ad--hoc semantic decisions
that could well have gone in a different way. Its not all about {\it one canonical}  completeness theorem but rather about {\it several} completeness theorems
obtained by varying both the semantic and syntactical parameters seeking an optimal fit.
This can be implemented from a classical relativized representability theory,
like that adopted in the monograph \cite{HMT1}, though such algebras were treated in {\it op.cit} off main stream,
and they were only brought back to the front of the scene by the work of Resek, Thompson,
Andr\'eka \cite{AT} and last but not least Ferenczi \cite{Fer}.

{\bf Defining the notion of a representation:} 
Inspite of the infinite discrepancy between abstract and concrete algebras, 
there are (other) means  to control the `notion of a representation'. Representations of an algebra can be described in {\it a first 
order theory in a two--sorted language.} 
The first sort in a model of this {\it defining theory}
is the algebra itself, while the second sort is a representation of it. 
The defining theory specifies the relation between the two, 
and its axioms depend {\it on what kind of representation}, be it relativized, ordinary, complete, etc,  is considered.

{\it Thus the representable algebras are those models of the first sort 
of the defining theory, with the second sort providing the representation.}

Many classes in algebraic logic can be seen
this way. The defining theory is usually finite, simple 
and essentially recursively enumerable; and if we are lucky it will be recursive. 
The class of all structures that arise as the first sort of a model of a two-sorted 
first order theory is an old venerable notion in model theory introduced 
by Maltsev
in the forties of the 20th century. Ever since it was studied by Makkai and others. 
It is known as a pseudo--elementary class.  
What is meant here is  
a $PC_{\Delta}$ class \cite{HHbook} 
but expressed in a two sorted language. 
The term pseudo--elementary class strictly 
means $PC_{\Delta}$ 
when the second sort is empty, but the two notions were proved to be 
equivalent by Makkai \cite{HHbook}.
Any elementary class is pseudo--elementary, 
but the converse is not true;  
the class of $\alpha$--dimensional neat reducts
of $\beta$--dimensional cylindric algebras for $1<\alpha<\beta\cap \omega$
is an example \cite{Sayedneat}. 
Other examples are the class of strongly representable atom structures 
and the class of completely representable algebras, both of finite dimension $>2$,  as proved by Hirsch and 
Hodkinson \cite{Sayedneat, HH, strongca},  

According to Hirsch and Hodkinson \cite{HHbook} a 
fairly but not completely general {\it definition} of 'a notion of representation'
is just the second sort of a model of a two-sorted 
(more often than not recursively enumerable) first order theory, 
where the first sort of the theory
is the algebra.  Put in this form,  Hirsch and Hodkinson apply model--theoretic finite forcing
to representing various abstract classes of algebras, using possibly relativized representations. 
Model--theoretic forcing,  
with precursors Henkin's proof of the classical completeness
theorem for first order logic and its algebraic version; the 
neat embedding theorem (proved also by Henkin),  typically involves constructing a model of a first order theory by a {\it game.}

{\bf Axiomatizing the class of representable algebras using games:} 
Games are a highly structured activity with powerful concrete intuitions concerning moves and strategies. 
This view has been around as an undercurrent in logic for
quite some time, 
emerging  often in various contexts.
The game builds the model  with  elements being produced by the second player
called $\exists$ (verifier), in his response to criticism by the first 
player, called $\forall$ (falsfier).
The falsifier draws objects from the domain of discourse which can be tested for certain facts.
Semantical notions of validity serve as a touch--stone of adequacy for proof theoretic or game theoretic proposals,
but the latter provides more vivid ideas and elaborate concepts
about structuring of arguments and procedures
of reasoning. The analysis of winning strategies of the players involved
during the play  
gives fine and delicate information, and allows one to delve deeper into the analysis.
The approach of Hirsch and Hodkinson is basically to combine the forcing games with 
the two sorted approach mentioned above to representations.

For example, games can be used to give an explicit (necessarily 
infinite) recursive axiomatization of the class $\RCA_n$ for $2<n<\omega$. We give a rough outline now.
The idea is that a \ws\ for \pe\ in a $k$--rounded
game for $k<\omega$,  call it $\bold G_k$, can be coded in a first order sentence $\rho_k$ in the signature of $\CA_n$. 

This game is played on so-called {\it networks on $\A$}. A pre-network $N$ on $\A$ is a map 
$N: {}^n\Delta\to \A$, where $\Delta$ is a finite set of nodes, denoted by $\nodes(N)$.  A network is roughly a `finite approximation to a representation' 
so it  is a pre--network that satisfes certain consistency conditions. For example, for $\bar{x}, \bar{y}\in {}^n\Delta$, 
$N(\bar{x})\leq {\sf d}_{ij}\iff x_i=x_j$ and $\bar{x}\equiv_i \bar{y}$ $\implies$ $N(\bar{x})\cdot {\sf c}_iN(\bar{y})\neq 0$. 
The $k$--rounded game between \pa\ and \pe\ where the board of play 
consists of networks on $\A$ is denoted by $\bold G_k(\A)$.
Suppose that we are at round $0<t<\omega$ and $t<k$.  Then $\forall$ challenges \pe\ by 
choosing a previously played network $N_t$ an edge $\bar{x}$ of $N_t$ an index $i<n$ and $a\in \A.$ 
\pe\ can either {\it reject this move} by playing the pre--network $N_{t+1}$ that is like $N_t$ except that $N_{t+1}(\bar{x})=
N_t(\bar{x})\cdot -{\sf c}_ia$.
If she accepts, then \pe\ has to respond with a network $N_{t+1}$ 
{\it refining $N_t$; in this repect being a finer finite approximation to a representation}. In particular, $N_{t+1}\supseteq N_t$ (as maps). 
In her response, \pe\  plays a pre-network defined as follows: 
$\nodes(N_{t+1})=\nodes(N_t)$ plus a new node $z$. Let $\bar{z}$ be given by $\bar{z}\equiv_i\bar{x}$, $z_i=z$. The labels of 
the $n$--tuples (hyperedges) of $\nodes(N_{t+1})$ are given by
\begin{itemize}
\item $N_{t+1}(\bar{z})=a\cdot \prod_{j,k<n, z_j=z_k}{\sf d}_{jk}$

\item $N_{t+1}(\bar{x})=N_t(\bar{x})\cdot {\sf c}_ia$

\item $N_{t+1}(\bar{y})=N_t(\bar{y})$ for all $y\in {}^nN_t\sim \{\bar{x}\}$

\item $N_{t+1}(\bar{y})=\prod_{j,k<d, y_j=y_k}{\sf d}_{jk}$ for all $y\in {}^nN_{t+1}\sim (\{\bar{z}\}\cup {}^nN_t).$
\end{itemize}
If $N_{t+1}$ is a {\it network}, in this case we say that \pe\ survives round $t$. 
If \pe\ survives every round $t< k$, 
then \pe\ wins; otherwise \pa\ wins. 
There are no draws.
For a countable algebra $\A$, $\A\models \rho_k$ $\iff$ \pe\ has a \ws\ in $\bold G_k(\A)$. 
One can translate the set of such  sentences $\{\rho_k: k\in \omega\}$ to a set 
of equations $\Sigma$ using that $\RCA_n$ is a discriminator variety. If $\A$ is countable and $\A\models \Sigma$, then this 
means that \pe\ has a \ws\ (expressed by) $\rho_k$ in $\bold G_k(\A)$ for all $k\in \omega$. 
Using a compactness argument, one shows that \pe\ has a \ws\ in $\bold G_{\omega}(\A)$.
Then \pe\ can use her \ws\ in 
$\bold G_{\omega}(\A)$, 
to build a representation of $\A$. 
The countability condition here is essential, so that all possible moves by \pa\ can be scheduled
and it can be assumed that \pe\ has succeeded to respond to 
every possible move (challenge) by \pa. 

\pe\ builds the required representation as the `limit' of the play follows: Consider a play
$N_0\subseteq N_1\subseteq \ldots $ of $\bold G_{\omega}(\A)$ in which $\exists$ uses his \ws\
and $\forall$ plays every possible legal move.
The {\it limit} of the play is a representation of $\A$ defined as follows: 
Let $N=\bigcup_{t<\omega}N_t$ and define $h:\A\rightarrow\wp(N)$ as follows:
$$h(a)=\{\bar{x}\in N: \exists t<\omega(\bar{x}\in N_t \& N_t(\bar{x})\leq a)\}.$$

One then readily deduces that ${\sf Mod}\Sigma ={\sf RCA}_n$, 
for if $\C\models \Sigma$, then by 
the downward Tarski--\ls\ theorem there is an elementary subalgebra of $\C$, $\A$ say, such that
$\A\models \Sigma$.
By the above reasoning $\A$ is representable.
But $\RCA_n$ is a variety, hence closed under elementary equivalence,  
so $\C$ is representable, too, cf. \cite[\S.8.3]{HHbook}.
A concrete instance of such games will be encounterd in theorem \ref{b}.

{\bf Representing all $\CA_n$s (after all) by twisting set algebras?} To understand the ``essence'' of representable algebras, one often deals
with the non-representable ones, the ``distorted images'' so to speak. 
Simon's result in \cite{Simon}, of ``representing" non-representable algebras, 
seems to point out that this distortion is, after all, not completely chaotic.   
This is similar to studying non-standard models of arithmetic, 
that do shed light on the standard
model. From the main result in \cite{AT} (the famous Andr\'eka--Resek--Thompson theorem), 
it follows that if $2<n<\omega$, and $\A\in \CA_n$
satisfies the so--called  {\it merry go round identities}, $\sf MGR$ for short, 
then $\A$ is representable as a relativized set algebra having top element 
a set
$V\subseteq {}^nU$ such that if $s\in V$, and $i, j<n$ then $s\circ [i|j]\in V$. 

Let $\alpha$ be an ordinal $\geq 3$. Instead of asking ``What is missing from $\CA$'s to be representable?", Henkin turned around the question and asked how much set 
algebras needed to be distorted to provide a representation of all $\CA_{\alpha}$'s. And, strikingly, the anwser provided by Simon is 
``not very much", at least for the lowest value of $\alpha$, 
for which Monk's seminal result and its improvements apply, namely, 
$\alpha=3$.
Simon \cite{Simon} proved that any abstract 
$3$-dimensional cylindric algebra satisfying 
$\sf MGR$ can be obtained from a ${\sf Cs}_3$ (a cylindric set algebra of dimension $3$) 
by the methods called {\it twisting and
dilation}, studied in
\cite[pp. 86--91]{HMT2}. This, in a way, does add to our understanding of the distance between
the abstract notion of cylindric algebra 
and its concrete one, at least in the case of dimension $3$. 
However, Simon had to broaden Henkin's notion of twisting to exhaust the class 
$\CA_3$.  He also showed that Henkin's more restrictive notion of twisting 
does not fit the bill; there are abstract $\CA_3$'s satisfying the $\sf MGR$
that  cannot be obtained by the methods of 
relativization, dilation and twisting, the latter 
understood in the sense of Henkin. Simon's twisting is a stronger ``distortion" of the original algebra, and so its scope
is wider, it can ``reach" more algebras. 
The analogous problem for higher dimension is still an intriguing open problem. The idea is that given any abstract $\A\in \CA_3$ 
then there  is a set algebra $\C\in {\sf Cs}_3$, such that $\A$ can be obtained from $\C$ by applying a finite 
number of operations among 
twisting, dilating, relativizing and forming subalgebras, not neceassrily all (e.g. if $\A\in \sf Cs_3$), 
and not necessarily in this order. 

Let $\Ca_{\alpha}$ denote the class of $\CA_{\alpha}$ atom structures. Recall that the the idea behind dilations
is that in a $\CA$ if it is an atom can be inserted in a certain position, as long as its addition does not contradict the $\CA$ axioms.
Twisting, originally, consists of starting from a complete atomic $\CA_\alpha$ $\A$, selecting atoms $a,b\in \A$ 
and an ordinal $k<\alpha$ and then redefining ${\sf c}_k$ on 
$a$ and $b$ by interchanging the action of ${\sf c}_k$ on $a$ and $b$, in part, ``twisting".
Twisting is used to ``distort" atom structures. It produces $\Ca_{\alpha}$'s from 
$\Ca_{\alpha}$'s, and it typically kills $\sf MGR$. 
However, in some circumstances it {\it can reproduce $\sf MGR$!}.
In both dilation and twising, one starts out with a complete and atomic $\CA_{\alpha}$s, adjoins new elements and /or changes the operations to get a new, 
complete atomic $\CA_{\alpha}$ with certain prescribed 
properties. 

Now here is an ouline of how to `represent' every $\CA_3$ \cite{Simon}: 
One starts with a $\CA_3$ $\A$ and checks if $\sf MGR$ holds in it. If it does then by the 
Andr\'eka--Resek-Thompson theorem \cite{AT}, 
$\A$ is a subalgebra of a relativized $\Cs_3$, and we are done. Else, 
one embeds $\A$ into its complete and atomic canonical extension $\A^+$ 
in order to be able to repair the failure of $\sf MGR$ by twisting the atom structure of $\A^+$. 
But one has to apply dilation first by inserting atoms 
to adjust the twisting parameters, so that twisting can be applied in the first place.
Twisting is then applied to the algebra $\B$ obtained by dilating $\A^+$ to get a $\CA_3$, $\A_3$ in which  $\sf MGR$ holds.
Then one uses the Andr\'eka-Resek--Thompson result to 
represent $\A_3$ as a relativized $\sf Cs_3$.
So here twising is used in a more constructive way; 
by twisting an algebra in which $\sf MGR$ does not hold, we get one where
$\sf MGR$ holds.  

{\bf Since the effect of twisting can always be undone by twisting the twisted algebra, the procedure we have described shows that $\A$ can be
obtained from a subalgebra of a relativized set algebra $\A_3\in \CA_3$ by applying dilation, twisting, relativization 
and the operation of forming 
subalgebras, not necessarily in this order.}

\section{The interaction with modal logic; non--orthodox rules}

Obtaining positive  representability results can also be implemented  
from a modal perspective, in even a more natural way, a task primarily  initiated by Yde
Venema in his dissertation.
This approach has enriched the subject
considerably, dissolving the so robust persistent non--finite axiomatizability 
results in algebraic logic initiated by Monk.
The correspondence between modal logic and algebraic logic allows one, on a first level, to study the algebras 
to understand the deductive systems. But often metalogical properties end up having logical counterparts. 
Though for a long time the algebraic and logical strands have been carried out in
relative isolation, now the tie between 
these two approaches is considerably strengthened. 
Algebraising modal logic allows strong new techniques to bear on metalogical problems. 
Furthermore, algebraic semantics turns out better behaved than Kripke frame based semantics. For example 
there is an (algebraic) completeness theorem  for every normal  modal logic, dually there is a representation 
theorem for Boolean algebras with operators extending Stone's reprsentability result, due to 
Jo\"nsson and Tarski. There is no analogous result for frames.

At a deeper level of this interplay, is e.g the result that is undecidable to tell whether a given finite $\CA_n$, for $2<n<\omega$, is representable 
used to obtain strong undecidablity  results for multimodal logics between the product multimodal logics $\bf K ^n$ and $\bf S5^n$ \cite{k}.
Conversely, in his pioneering paper `Modal Cylindric Algebras' \cite{v}, 
Venema crossed the bridge from the other side. 
Standard Hilbert style axiomatizations, being excluded by Monk's resullt, Venema succeeded to obtain a sound and complete 
proof system for finite variable fragments of first order logic (disguised in a modal formalism) 
with at least three variables;  using instead non--orthodox derivation rules involving the 
so--called difference operator as illustrated in some detail 
next.

On the one hand, one can view modal logics as fragments of first order logic. But on the other hand,  one can also
turn the glass around and give first order logic with $n$ variables  a modal formalism, by viewing assignments as worlds, and 
giving the existential quantifier the most
prominent citizen in first order logic the following familar modal pattern:
$$M, s\models \exists x_i\phi \Longleftrightarrow (\exists s)(s\equiv _i t)\&M, s\models \phi.$$
(Recall that $s\equiv _i t$ means that $s$ and $t$ agree off of $i$, that is
$t(j)=s(j)$ for all $j\neq n$). Here existential quantifiers are viewed as modalities.
A representation theorem for algebras like the one mentioned in theorem \ref{m} is {\bf the dual} of representing abstract state frames by what
Venema calls assignment frames (sets of sequences).  In the aforementioned theorem the frames had domains diagonizable
or  locally square 'set of sequences'.  The Stone--like 
representability result proved for such algebra in \cite{AT, Fer}, and later by games in \cite{mlq}
are in essence algebraic. 

But in some other cases like in the `squareness case' it might be easier to work on {\bf the  frame level}.
Indeed it {\it is easier}. Fix $2<n<\omega$. 

Venema worked on the frame level {\it of squares}, 
namely, atom structures of the form $({}^nU, T_i, D_{ij})_{i, j<n}$,  
where the accessibility relation corresponding to cylindrfiers and diagonal elements 
are defined by: For $s, t\in {}^nU$, $sT_it\iff\s\equiv_i t$ and $D_{i,j}=\{s\in {}^nU: s_i=s_j\}$.
This (modal) view enabled him to use 
non--orthodox derivation rule common in modal logic, 
to get a completeness theorem for finite 
variable fragments of first order logic.

The 
completeness result achieved by Venema for what he calls {\it cylindric modal logic} (a dual formalism of 
finite variable fragments of first order logic),  has its roots in 
the prophetic monograph \cite{HMT2}. 
Such techniques are the {\it frame version} of the notion of so--called {\it richness} 
\cite[Definition 3.2.1]{HMT2}. A {\it rich} algebra has a rich 
supply of elements, that are sufficient to satisfy
a certain set of equations \cite[Lemma 3.2.3, Theorem 3.2.5]{HMT2} to be recalled in a moment.  
And it is precisely the (dual of the) richness condition that can be transformed into  
a non--othodox derivation rule as will be shown in a moment. But such a rule, as it stands on its own,  is only {\it sound}, but it is also
`potentially complete'.  One  can add only finitely many axioms to get completeness. 
From the modal point of view  the unorthodox completeness theorem is based on a {\it special characterization} of the $n$--squares  $^nU$ (domain the Kriple frames). 

{\it The starting point for this characterization is the observation that {\bf the inequality relation} on such $n$--squares  can be obtained in
as a certain composition of the accessibility relations (corresponding to cylindrifiers and diagonal elements), 
using the fact that the difference operator on squares is term definable in a `nice' way.}  

A relation $R^n$ as {\it a new  accessibilty relation} is added to abstract 
Kripke frames (atom structures of $\CA_n$s). Its aim is to enforce representability of the complex algebra based on this frame. 
This relation cn be modally expressible (else there would have been no problem!), but it can be reflected by 
an operator $D_n$ which acts as {\bf a difference operator}. 

{\it The crucial key idea here is that 
a frame is representable as a square $\iff$ $R^n$ is an inequality relation}. 

This is a complicated 
condition, that as indicated cannot be modally expressed, but it can be `captured' if we 
consider  the {\it difference
operator} $D$. This operater $D$ increases 
the expressive power of modal languages. 
For example, it can express frame properties that are 
not expressible modally, such as  irreflexitivity via $\Diamond p\to Dp$. 
This definability of $R^n$ using a `difference--like operator', denote it by $D_n$, leads to a new 
non--orthodox derivation rule, 
proving completeness. What makes this rule non-orthodox, as opposed to orthox is 
that the property of {\it irreflexitivy of the inequality relation} stimulated by the difference operator is not modally 
definable, but `$D_n$ definable'.  

Now a carefuly scrutiny reveals that $D_n$ is precisely 
the {\it frame version} of the equations in \cite[Theorem 3.2.5]{HMT2}.
Using the notation in \cite{v} (1):
$$D_n\phi=\bigvee_i \bigvee_{j\neq i} 
\Diamond_j({\sf d}_{ij}\land \Diamond_0\ldots \Diamond_{i-1}\Diamond_{i+1}\ldots \Diamond_{n-1}\phi).$$
Here $D_n$ is defined to make $R^n$ its accessibility relation, that is for any $n$--square frame $\M$ and $u\in \M$,
$$\M, u\models D_n\phi\iff (\exists v )(R^n(uv)\& v\models \phi).$$
The equations in the hypothesis of \cite[Theorem 3.2.5]{HMT2} are (2):
$$\bigwedge_{k, l\in n}{\sf c}_k[x\cdot y\cdot {\sf c}_k(x\cdot -y)]-{\sf c}_l({\sf c}_kx
\cdot -{\sf d}_{kl})=0.$$
(2) is the dual of (1)  once one identifies the cylindrifier ${\sf c}_i$ the diamond modality $\Diamond_i$ ($i<n$)
 together with some easy manipulations. 

\subsection{Rectangular density}

Continue to fix $2<n<\omega$. There is another algebraic expression of such  non--orthdox derivation rules  by so--called {\it density conditions}.
This approach also has its roots in the monograph \cite{HMT2}, cf. \cite[Theorem 3.2.14]{HMT2} where 
it is proved that if $\A\in \CA_n$ is an {\it atomic rectangulary 
dense algebra}, that is, $\A$ is atomic and its atoms are 
{\it rectangles}, then $\A$ is representable. 
Here a rectangle is an $n$--dimensional rectangle; it is an  element that satisfies
$\prod_{i<n} {\sf c}_i x=x$. Notice that  $\geq$ always holds, because $x\leq {\sf c}_ix$ for any $i<n$. 
When $n=2$, and the algebra is a set algebra with top element $U\times U$, and $X\subseteq U\times U$
a $2$--dimensional rectangle is just the familiar geometric  
rectangle obtained by cylindrifying 
on both dimensions.
This is a nice sufficient condition for representability; in fact it can be proved that an algebra is representable $\iff$ it can be embdded in 
an atomic one whose atoms are rectangles \cite[Theorem 3.2.16]{HMT2}. The bad news is that
this characterization cannot be expressed in a $\forall\exists$ formula due to the involvement of `atomicity'. 
From the logical point of view this `complicated notion' of atomicity is not warranted, 
because it does not lend itself to derivation 
rules even non--orthodox ones.
The  escape of this impasse 
was accomplished 
by simply {\it removing the condition} of atomicity by Andr\'eka  et al. \cite{AGMNS}, 
and proving that the representability result survives this omission. 
Every {\it rectangulary dense} algebra is reprsentable, where 
by {\it rectangularly density} is meant that below every element there is a rectangle, that is not necessarily an atom.
Algebras considered might not be atomic. 
By the above discussion one cannot help but to ask the purely algebraic question.
How are  the notions of {\it rectangular density} and {\it richness},  related, if at all?

{\it As illustrated next, both are algebraic expressions of a Henkin construction; the last  achieving completeness for $L_{\omega, \omega}$.
But instead they reflect algebraically a complete non--orthodox 
proof system for $L_n$ using a variant of the difference operator.}

The rectangles in a rectangulary dense algebra can be associated with so--called $0$--thin elements \cite[Definition 3.2.1]{HMT2}. 
Such elements have a double facet, a geometric one and a metalogical one.
The metalogical interpretation is that these elements  {\it abstract the notion of  individual constants} \cite[Remark 3.2.2]{HMT2}.
Geometrically, in a set algebra $0$--thin elements are obtained by fixing the first component of assignments by a constant.
That is, if $\A$ is a set algebra with top element $^nU$ say,  and $X\in \A$ is $0$ thin, then 
$X=\{s\in {}^{\alpha}U: s_0=u\}$  for some $u\in U$. {\it Thinnes here means that literally there is  a thin line between the dimension of $X$
and the dimension of $\A$.}
There is  `enough supply' of such elements in a rectangularly dense algebra. 
This algebraic notion of {\it richness} actually  
reflects the notion of {\it rich theories in Henkin constructions}.

Rich theories occuring in Henkin's completeness proof eliminate existential 
quantifiers in existential formulas via individual constants more commonly referred to as {\it witnesses}. 
Algebraically, every cylindrifier is elimintated, or {\it witnessed} by a $0$--thin element \cite[Definition 3.2.1]{HMT2}.
But then by algebraising the rest of Henkin's proof, we get that rich algebras are representable. This connection 
manifests itself blatantly in the proof of \cite[Theorem 3.2.5]{HMT2} where the base of the representation
actually consists only of $0$-thin elements, whereas  
the generic canonical 
models in Henkin constructions consist of individual constants.
More succintly, richness and rectangular 
density are {\it saturation conditions}. Geometrically:  rectangular density 
means that below every non--zero element there is a rectangle, while 
richness means that below every 
element there is  {\it a square}, a special kind of rectangle as the name suggests.
The latter notion strikes one as weaker,
but both notions are sufficiently strong to enforce representability, and both notions {\it can be 
transferred to non--orthodox derivation 
rules using the difference operator} achieving completeness when 
added to the cylindric (modal)
axioms.  In fact, it can (and was) proved in \cite{AGMNS} that both notions are essentially equivalent.
In {\it op.cit} it is proved that $\A$ is rectangularly dense (and quasi--atomic) $\iff$
$\A$ is rich.

For diagonal free cylindric algebras, ${\sf Df}_n$ for short,  one 
cannot express the difference operator because there is no `equality' in the signature, so that
we {\it cannot express non--equality.}  Rectangular density 
can be formulated in the language of $\sf Df_n$ but richness cannot be. 
So here one uses rectangular density to prove representability.
These ideas were implemented by Venema \cite{r} extending the results in \cite{AGMNS} to ${\sf Df}_n$s.
The required representation (completeness theorem) was attained 
using {\it Rectangular games}  played on so--called {\it crystal networks}.
In the present `diagonal--free' context the technique used by Andr\'eka et al. \cite{AGMNS} 
based on \cite[Theorem 3.2.16]{HMT2}  does not work.

Dually, from the (multi) modal logic perspective
one takes the non--orthodox  Gabbay--like irreflexive (density) rule:
$p\land \tau(\neg \phi\land p))\to \phi$, if $p\notin \phi$, then 
deduce $\phi$, \cite[p.1563]{v}, 
where  $$\tau(\chi)=\neg \Diamond_0\ldots \neg \Diamond_{n-1}[(\bigwedge_{i\in n}\Diamond_0\ldots \Diamond_{i-1}
\Diamond_{i+1} \ldots \exists \Diamond_{n-1}\chi)\land
\neg \chi,$$
$\chi$ a formula. 
Together with the $\sf S5$ axioms for each $i<n$ this gives a complete and sound (non--orthodox) 
proof system $\vdash$. This is a {\it translation of the algebraic property rectangular density}, 
in the sense that if $T$ is a theory,  then the Tarski--Lindenbaum  quotient algebra $\Fm_T$, where the quotient 
is taken with respect to $\vdash$ is rectangularly dense, hence representable. 

The representability result `{\it rectangular density in a ${\sf Df}_n$ 
$\implies$ representability}' was proved using games played on networks 
defined next:
\begin{definition} Let $2<n<\omega$ and $\A\in {\sf Df}_n$.
\begin{enumarab}
\item An {\it $\A$ pre--network} is 
 a pair $N=(N_1, N_2)$
where $N_1$ is a finite set of nodes, and  $N_2:N_1^n\to \A$ is a total map. 
$N$ is {\it atomic} if $\rng N\subseteq \At\A$.
We write $N$ for any of $N, N_1, N_2$ relying on context, we write $\nodes(N)$ for $N_1$. 
\item A pre-network $N$ is said to be a network if it satisfies the following consistency condition: for $\bar{x}, \bar{y}\in N$ and $i<n$, 
$\bar{x}\equiv_i \bar{y}\implies N(\bar{x})\cdot {\sf c}_i N (\bar{y})\neq 0.$
\end{enumarab}
\end{definition}
We define a game.  But first a piece of notation. For a function $f$ and $i\in \dom(f)$, $g=f_i^u$ denotes the function 
with the same domain as $f$, such that $f\upharpoonright \dom(f)\sim \{i\}=g\upharpoonright \dom (f)\sim \{i\}$ and 
$g(i)=u$.  
A play of the game consists of a sequence $N_0\subseteq N_1\subseteq \ldots$ of networks so that there are $\omega$ rounds.
Suppose we are at round $t$, with the network $N_t$ the outcome of the play so far. 
$\forall$ makes a move by 
\begin{enumarab}
\item  Choosing $a\in \A$. $\exists$ must respond with a network
$N_{t+1}\supseteq N_t$ such that either $N_{t+1}(\bar{x})\leq a$ or $N_{t+1}(\bar{x})\leq -a$,

\item $\forall$ may choose an edge $\bar{x}$ of $N_t$ an index $i<n$ and $b\in \A$ with $N_t(\bar{x})\leq {\sf c}_ib$.
$\exists$ must respond with a network $N_{t+1}\supseteq N_t$ such that for some $z\in N_{t+1},$
$N_{t+1}(\bar{x}^i_z)=b$.
\end{enumarab}
Fix $2<n<\omega$. Our aim is to show that if $\A$ is countable and rectangularly dense then \pe\ has a \ws\, and so $\A$ is representable.
A rectangle in an $\A\in {\sf Df}_n$, and a rectangularly dense $\sf Df_n$ are defined like above (the definitions are free from diagonal 
elements).

\begin{definition} A network $N$ is said to be {\it rectangular} if $N(\bar{x})$ is a non-zero 
rectangle for every $s\in {}^nM$.
$M$ is a {\it crystal network} if
${\sf c}_iM(s)={\sf c}_iM(t)$ whenever $s\equiv_i t$ for any $i<n$. 
\end{definition}
\begin{lemma}\label{venema}\cite[Lemmas 1, 2, p.1557, p.1560]{r}. 
Let $\A\in \sf Df_n$ be countable and rectangularly dense.
\begin{enumarab}
\item If $M$ is a crystal network, and $r\in \A$ is a non--zero rectangle below $M(t)$ for some $t\in {}^nM$, then there is a 
crystal extension $M'$ of $M$ such that $M'(t)=r$.
\item If $M$ is  crystal network and ${\sf c}_iM(t)={\sf c}_ir$ for some $t\in {}^nM$ 
and some rectangle $r\in \A$, $r>0$, 
then there is a crystal extension $M'$ of $M$ and an element $t'\in {}^nM'$ such
that $t\equiv_i t'$ and $M'(t)=r$.
\end{enumarab}
\end{lemma}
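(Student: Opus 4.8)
The plan is to read parts (1) and (2) as the two basic one--step extension lemmas behind \pe's \ws\ in the rectangular game: part (1) answers \pa's splitting move and part (2) answers his cylindrifier move. The common engine is the \emph{grid normal form} of a crystal network. I would first record the elementary fact that a meet $\prod_{k<n}a_k$ of \emph{$k$--cylinders} (elements with ${\sf c}_k a_k=a_k$) is automatically a rectangle: from $\prod_k a_k\le a_j$ we get ${\sf c}_j\prod_k a_k\le {\sf c}_j a_j=a_j$, hence $\prod_j{\sf c}_j\prod_k a_k\le\prod_k a_k$, while the reverse inequality is $(C_2)$. Dually, in a crystal network the value ${\sf c}_iM(s)$ depends only on the $\equiv_i$--class of $s$; writing $\gamma_i$ for this common value on each class and using that every $M(s)$ is a rectangle yields the normal form $M(s)=\prod_{i<n}\gamma_i([s]_i)$ with each factor an $i$--cylinder. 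Thus a crystal network is completely coded by its family of hyperplane labels $(\gamma_i)_{i<n}$, and conversely any family of cylinders whose pointwise meets are non--zero and satisfy ${\sf c}_iM(s)=\gamma_i([s]_i)$ gives one back.

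For part (1), since $r=\prod_{i<n}{\sf c}_ir$ is a non--zero rectangle with ${\sf c}_ir\le {\sf c}_iM(t)=\gamma_i([t]_i)$, the natural move is to lower precisely the hyperplane labels through $t$, replacing $\gamma_i([t]_i)$ by ${\sf c}_ir$ for each $i<n$, keeping all other hyperplane labels fixed, and setting $M'(s)=\prod_{i<n}\gamma_i'([s]_i)$. Then $M'(t)=\prod_i{\sf c}_ir=r$, every label is a rectangle by the fact above, and $M'(s)\le M(s)$, so $M'$ refines $M$. Non--zeroness of the altered labels is forced rather than assumed: as $2<n$, any $s\ne t$ lies in the $t$--hyperplane of \emph{at most one} direction, so for $s\equiv_i t$ with $s\ne t$ the factors in directions $k\ne i$ are unchanged, and cylindrifying in direction $i$ via $(C_3)$ gives ${\sf c}_iM'(s)={\sf c}_ir\cdot{\sf c}_i\prod_{k\ne i}\gamma_k([s]_k)={\sf c}_ir\ne 0$, whence $M'(s)\ne 0$; tuples meeting no $t$--hyperplane keep their old non--zero label.

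For part (2), reading the conclusion as $M'(t')=r$, I would adjoin a fresh node $z$ and put $t'\equiv_i t$ with $t'_i=z$, so that $t'$ shares all hyperplanes of $t$ off direction $i$ while $[t']_i=[t]_i$. The new node creates genuinely new hyperplanes in the directions $k\ne i$ (their determining data now involves $z$), which I am free to label; setting $\gamma_k([t']_k)={\sf c}_kr$ there while keeping the old value $\gamma_i([t']_i)={\sf c}_iM(t)={\sf c}_ir$ gives $M'(t')={\sf c}_ir\cdot\prod_{k\ne i}{\sf c}_kr=r$. The hypothesis ${\sf c}_iM(t)={\sf c}_ir$ is exactly what makes $z$ compatible with the old network across the single new $\equiv_i$--edge, since $M'(t)\cdot{\sf c}_iM'(t')=M(t)\cdot{\sf c}_ir=M(t)\cdot{\sf c}_iM(t)=M(t)\ne 0$. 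The labels on the remaining tuples through $z$ are then supplied as meets of the inherited cylinders, their non--zeroness guaranteed by invoking rectangular density to choose a non--zero rectangle below each relevant meet.

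In both parts the genuine difficulty is not non--zeroness but \emph{re--establishing the crystal condition} ${\sf c}_iM'(s)={\sf c}_iM'(s')$ for $s\equiv_i s'$ once labels have been altered. The trouble is that cylindrification does not distribute over meets, and the asymmetry concentrates at the ``off--cross'' tuples: if $s\equiv_{k_0}t$ for some $k_0\ne i$ while its $\equiv_i$--partner $s'$ does not, the lowered factor ${\sf c}_{k_0}r$ occurs in $M'(s)$ but not in $M'(s')$, and a priori ${\sf c}_i$ of the two meets need not agree, so the bare product formula does \emph{not} suffice. This is where I expect the main work to lie and where the hypotheses earn their keep: one must use commutativity of cylindrifiers $(C_4)$ together with rectangular density to re--choose the off--cross labels as non--zero rectangles that restore every identity ${\sf c}_iM'(s)=\gamma_i'([s]_i)$. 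Once the crystal identities are secured, network consistency is immediate, for if $\bar{x}\equiv_i\bar{y}$ then the crystal condition gives ${\sf c}_iM'(\bar{y})={\sf c}_iM'(\bar{x})\ge M'(\bar{x})$ by $(C_2)$, whence $M'(\bar{x})\cdot{\sf c}_iM'(\bar{y})=M'(\bar{x})\ne 0$.
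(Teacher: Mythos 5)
There is a genuine gap in part (1), and you have in fact located it yourself without closing it. Your construction alters only the hyperplane labels $\gamma_i([t]_i)$ of the classes \emph{through $t$} and leaves every other label untouched; as you observe, for a pair $s\equiv_i s'$ with $s\equiv_j t$ ($j\ne i$, $s\ne t$) and $s'$ off the cross, the lowered factor ${\sf c}_jr$ occurs in $M'(s)$ but not in $M'(s')$, and the crystal identity ${\sf c}_iM'(s)={\sf c}_iM'(s')$ genuinely fails. This is not a difficulty to be deferred to ``the main work''; it is a counterexample to the construction. Concretely, in a full diagonal--free set algebra on ${}^3U$ a crystal network has the form $M(a,b,c)=P_0(a)\times P_1(b)\times P_2(c)$; taking $t=(a_0,b_0,c_0)$ and $r=A\times B\times C$ a non--empty sub--rectangle of $M(t)$ with $B\subsetneq P_1(b_0)$, your $M'$ gives ${\sf c}_2M'(a,b_0,c_0)=P_0(a)\times B\times U$ but ${\sf c}_2M'(a,b_0,c)=P_0(a)\times P_1(b_0)\times U$ for $a\ne a_0$, $c\ne c_0$, although these two tuples are $\equiv_2$--related. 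Your proposed repair --- re--choosing the off--cross labels via rectangular density --- is not carried out and is not the right mechanism: shrinking one label to restore one crystal identity disturbs the identities at its own neighbours, and nothing you say controls that cascade.

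The paper's construction (following Venema) avoids the problem by shrinking \emph{every} label: $M'(u)=M(u)\cdot {\sf c}_{(\Delta(t,u))}r$, where $\Delta(t,u)=\{i<n: t_i\ne u_i\}$ and ${\sf c}_{(\Gamma)}$ is the composition of the cylindrifiers indexed by $\Gamma$. In your grid language this amounts to shrinking the data attached to each \emph{node} $t_k$ of $t$, which modifies $\gamma_i$ at every class meeting a node of $t$, not only at $[t]_i$; the crystal identities then follow by a computation with $(C_2)$--$(C_4)$ and rectangularity of $r$ and of the $M(u)$, and part (1) needs no appeal to rectangular density at all --- your invoking it there is a symptom of the wrong construction. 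Part (2) inherits the same unaddressed defect: after adjoining the new node $z$ you label the new hyperplanes through $t'$ by ${\sf c}_kr$ and gesture at density for ``the remaining tuples through $z$'', but the crystal identities tying those tuples back to the old network are exactly what must be verified, and the explicit labelling in the cited source is what does this. Finally, your preliminary observations (a meet of $i$--cylinders is a rectangle; $M(s)=\prod_{i<n}{\sf c}_iM(s)$ for a rectangular crystal network) are correct and useful, but the asserted converse --- that any family of cylinders with non--zero pointwise meets ``gives one back'' --- quietly presupposes the crystal condition, which is precisely the point at issue.
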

\begin{proof} For the first part define like on \cite[p.1558]{v}, $M'(t)=M(t)\cdot {\sf c}_{(\Delta(t,u))}r,$ where $\Delta(t,u)=\{i<n: t_i\neq u_i\}$.
Then $\nodes(M')=\nodes(N')$.
For the second part: If there is already a tuple in $^nM$ such that $t\equiv _i u$ and $M(u)={\sf c}_ir$, then $M$ is as required.
Else, choose $k\notin M$ and let $K=M\cup \{k\}$ be defined as in \cite[p.1560]{v}. 
\end{proof}
Then it can be shown \cite[Lemma 3]{r}, that in any round of the game, if the last network played is a crystal network
then \pe\ can survive this round responding 
with a crystal network. From this we conclude using the arguments in \cite{r} having at our disposal
lemma \ref{venema}:.
\begin{theorem}\label{b}
Let $2<n<\omega$. Let $\A$ be a countable rectangularly dense $\sf Df_n$. Then \pe\ has a \ws\ in $G_{\omega}$, hence
$\A\in \sf RDf_n$.
\end{theorem}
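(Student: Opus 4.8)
The plan is to exhibit an explicit \ws\ for \pe\ and then read a representation off the limit of a play, following the rectangular--game method of \cite{r}. The single invariant \pe\ will maintain is that every network she produces is a \emph{crystal} network all of whose labels are non--zero rectangles. This invariant is exactly what the two extension lemmas of Lemma \ref{venema} are designed to preserve, and the per--round survival statement quoted from \cite[Lemma 3]{r} asserts that whenever the current board $N_t$ is a crystal network, \pe\ has a legal crystal response to either kind of \pa--challenge. Thus the invariant propagates by induction through all $\omega$ rounds, and \pe\ survives every round, i.e.\ wins $G_\omega$.

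Concretely, \pe\ opens with a one--node network whose unique label is a non--zero rectangle below $1$ (one exists by rectangular density; recall $1$ is itself a rectangle since ${\sf c}_i1=1$), which is trivially crystal. Assume $N_t$ is crystal with rectangular labels. If \pa\ issues a Boolean challenge $a\in\A$, \pe\ must refine the labels so as to decide $a$ at every edge: for a label $r=N_t(\bar x)$, at least one of $r\cdot a$ and $r\cdot-a$ is non--zero, and by density there is a rectangle beneath it, which is installed by Lemma \ref{venema}(1); carrying this out consistently across the $\equiv_i$--classes while keeping the board crystal is precisely what \cite[Lemma 3]{r} guarantees. If \pa\ issues a cylindrifier challenge $(\bar x,i,b)$ with $N_t(\bar x)\le{\sf c}_ib$, then, replacing $b$ by a rectangle beneath it via density if need be, \pe\ invokes Lemma \ref{venema}(2) to adjoin a fresh node $z$ and a tuple $\bar x^i_z$ carrying the demanded rectangle, again staying crystal. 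In every case $N_{t+1}\supseteq N_t$ is crystal, completing the induction.

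To convert the \ws\ into membership in $\sf RDf_n$, let \pa\ be scheduled to play, eventually, \emph{every} legal challenge; this is possible exactly because $\A$ and the set of nodes ever introduced are countable, the feature of $G_\omega$ stressed earlier. Put $N=\bigcup_{t<\omega}N_t$ with node set $W$ and define $h:\A\to\wp({}^nW)$ by $h(a)=\{\bar x\in{}^nW:\exists t\,(\bar x\in{}^n\nodes(N_t)\text{ and }N_t(\bar x)\le a)\}$. The Boolean challenges make each tuple decide each element, so $h$ respects $-$ and $+$; the cylindrifier challenges deliver the nontrivial inclusion $h({\sf c}_ia)\subseteq{\sf C}_ih(a)$, and its converse follows from monotonicity together with the crystal identity ${\sf c}_iN(\bar x)={\sf c}_iN(\bar y)$ for $\bar x\equiv_i\bar y$. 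Crystallinity is doing the decisive work here: it forces the limiting cylindrifications to behave as genuine projections, so that the unit ${}^nW$ is a full square and $h$ takes values in a bona fide square $\sf Df_n$ set algebra rather than a merely relativized one.

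For faithfulness, I would run the game afresh for each non--zero $c\in\A$, opening instead with a one--node network whose label is a rectangle $\le c$ (available by density), so that $c\in h_c(c)$; the subdirect product of the $h_c$ over all non--zero $c$ embeds $\A$ into a product of square $\sf Df_n$ set algebras, giving $\A\in\sf RDf_n$. I expect the genuine difficulty to lie not in this assembly but in the per--round maintenance of crystallinity under the Boolean decision move --- the content of \cite[Lemma 3]{r}, resting on Lemma \ref{venema} --- and in checking that crystallinity of the limit really upgrades the network representation to one over a square unit; the scheduling and the homomorphism verification are then routine.
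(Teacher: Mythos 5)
Your proposal is correct and follows essentially the same route the paper intends: the paper's proof is just a pointer to Venema's \cite[Theorem 2, Lemma 3]{r} together with Lemma \ref{venema} and the limit-of-play construction described earlier, and you have filled in exactly those ingredients (the crystal/rectangular invariant maintained via Lemma \ref{venema}, per-round survival from \cite[Lemma 3]{r}, the limit homomorphism $h$, and the subdirect product over non-zero elements for faithfulness). No gap.
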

\begin{proof} Like the arguments used above in constructing a representation as a limit of the play 
on networks  cf. \cite[Theorem 2, Lemma 3]{r} using the crucial lemma \ref{venema}.
\end{proof}

The notions of density can be retrieved `modally' using the difference operator, 
but they cannot be retrieved by finite Hilbert style axiomatizations.
Such  non--orthox derivation rules can be traced back to the work of Gabbay and ever since
have been  frequently used by modal logicians, though some people argue  that they capture extra 
variables or` witnesses' from the back door and this is
inimical to the modal nature.
These extremely liberal Gabbay--style inference systems
typically correspond to classes that are inductive, i.e., axiomatized by 
$\forall\exists$--formulas.
like the class of rectangularly dense
cylindric algebras \cite{AGMNS} and its diagonal free reducts.

{\bf We hasten to add that the `game technique' used by Venema \cite{r} 
works for all other cylindric--like algebras dealt with in \cite{AGMNS} like Pinter's substitution algebras and quasi--polyadic algebras with and 
without equality,  but {\it not} `the other way round.'
The main technique in \cite{AGMNS} which reduces the problem to the  `atomic case' already proved by Henkin et al. \cite{HMT2} (for cylindric algebras), 
does not settle the $\sf Df$ case.} 

But in any case the 
important thing here  is that we have a sound and complete proof system, involving finite many axioms (and non--orthodox rules) 
achieving completeness relative to
square semantics. Such a a completeness theorem  circumvents 
the dominating so resilient
non--completeness theorems (obtained algebraically) 
by Monk, and considerably sharpened and refined 
by pioneers, to name a few, Andr\'eka, Biro, Hirsch, Hodkinson and Maddux \cite{Andreka, HHbook, Maddux, n,  Bulletin, v}.
Next we describe other efficient ways to avoid strong non--finite axiomatizability results.

\section{Finitizability using guarding conjuncted with the semigroup approach} 

We start by a strong incompleteness theorem for the so--called {\it typeless logics} 
cf. \cite[\S 4.3]{HMT2}  indicating a `severe infinite'  mismatch between syntax and the intended classical semantics 
(in any attempt to give an algebraizable formalism of 
$L_{\omega, \omega}$), as long as we insist on full fledged commutativiy of quantifiers. 
Here the adjective `typless' indicates that in the formation rules of formulas, one drops the rank function specifying  the arity of relation symbols; they all have the same rank, namely, $\omega$.
This is necessary if we want to algebraize $L_{\omega, \omega}.$ 

In a while we will manipulate {\it both syntax and semantics}. 
We will expand  the syntax (vocabularly) with finitely many connectives, and reformulate it in such a way that the connectives are {\it finite} without 
{\it losing the  
expressive power of the previous formalism; the other (infinitely many) connectives (like all cylindrifiers  $\exists_i: i<\omega)$) 
are still there; they are definable from the finitely many new connectives}.  We simultaneously {\it guard} (Tarskian square) semantics.
In this way, we 
shall be able to obtain a strong completeness theorem for another algebraizable typless 
variant of $L_{\omega, \omega}$ via a complete finite Hilbert style axiomatization involving only `type free' 
valid formulas. 
In this process, we weaken commutativity of quantifiers and the intended semantics is accordingly `widened'
allowing {\it generalized models} where assignments are restricted to the {\it admissable ones}, in the sense of the coming 
definition \ref{adm}. It is worthwhile sacrifizing (but not fully) 
this `semantical' precarious property of full--fledged commutativity of quantifiers 
seeing as how the harvest we reap is a strong completeness theorem 
for certain {\it tamed} finitary logics of infinitary relations having quite strong 
expressive power and, even more,  
unlike $L_{\omega, \omega}$ possibly having a decidable validity problem.

Let us start from the beginning, namely, by negative results. 
We start by defining  the `classical' more basic algebraizable typeless extension of $L_{\omega, \omega}$ with usual Tarskian square semantics
in the context of an algebraizable logic (in the standrad Blok--Pigozzi sense). Such logics are dealt 
with in \cite[\S 4.3]{HMT2}. Our formulation is slightly more updated.
By a {\it a logical system, a logic for short} we understand a quadruple $(F, \bold K, {\sf mng}, \models)$ where $F$ is a set (of formulas) in a certain signature,
$\bold K$ is a class of structure $\sf mng$ is a function with
domain $F\times \bold K$ and $\models \subseteq F\times F.$
Intuitively, $\bold K$ is the class of structures for the signature at hand 
$\sf mng(\phi, M)$ is the interpretation of $\phi$ in $M$ (possibly relativized),
and $\models$ is the pure semantical
relation determined by $\bold K$.  Such a definition is too broad; now we restrict  a(n algebraisable) logic to be:

\begin{definition} A logic  $(F, \bold K,  {\sf mng}, \models)$ with formula algebra $\F$
of signature $t$ is {\it algebraizable} if:
\begin{enumarab}

\item A set ${\sf Cn}\L$ the logical connectives fixed and each $c\in Cn\L$ finite rank
determining  the signature $t,$

\item There is set $P$ called atoms such that $\F$ is the term algebra or
absolutely free algebra over $P$
with signature $t$,

\item ${\sf mng}_M=\langle {\sf mng}(\phi, M): \phi\in F\rangle$ is an endomorphism on $\F$,

\item There is a derived binary connective $\leftrightarrow$
and a nullary connective
$\top$ that is compatible with the meaning functions,
so that for all $\psi, \phi\in F$, we have ${\sf mng}(\phi)={\sf mng}(\psi)\iff M\models \phi\leftrightarrow\psi$
and $M\models \phi\iff M\models \phi\leftrightarrow \top,$

\item For each endomorphism $h$ of $\F$, $M\in \bold K$, there is an $N\in \bold K$
such ${\sf mng}_N={\sf mng}_M\circ h,$  so that validity is preserved by endomorphisms.
\end{enumarab}
\end{definition}

Item (5) is what guarantees that instances of valid formulas remain valid for a homomorphism applied to a formula
$\phi$ amounts to replacing the atomic formulas in $\phi$ by
formula schemes.
This is a crucial property for a logic to allow {\it algebraization}. 
We are ready to describe the algebraizable modification of $L_{\omega, \omega}$ (first order {\it with equality}.)
To simplify matters, we consider the variables to have order type $\omega$ (the least infinite ordinal).
A {\it signature} is a triple $\Lambda=(\omega, R, \rho)$ such that $R$
and  $\rho$ are functions with common domain a cardinal $\beta\leq \omega$,
and $\rho(i)\leq \omega$ for all $i\in \beta$;
$R_i$ is a relation symbol of arity $\rho(i)$ (i.e the arity can be infinite.)

Fix a signature
having $\omega$ many relation symbols each of arity $\omega$.
The atomic formulas are of the form $v_k=v_l$ or 
$R_l(v_0, \ldots, v_i,\ldots)_{i<\omega}$, $l\in \omega$.
Variables can appear only in their natural order.
Such atomic formulas are called {\it restricted} \cite{HMT1}. For first order logic with equality 
the notion of restricted formulas is only an apparent 
restriction because any (usual) first order formula is (semantically) equivalent to a restricted one. 

For the above typless formalism using restricted formulas,   one can dispense with the use of variables altogether, since they only appear in their natural order. 
No information is lost this way; one mihght as well write simply $R$ instead of the atomic formula $R(x_0, x_1\ldots, x_i\ldots )_{i<\omega}$, 
($R$ a relation symbol of arity $\omega$). 
This formalism, like the case when we have only finitely many variables,  readily lends itself to a (infinite)--dimensional 
propositional modal formalism,  if we view the infinitely many existensial quantifiers as diamonds.

The formation rules of formulas are like (ordinary) first order logic; we use the logical connectives $\land$ for conjunction
and we denote negation by $\neg$. If $\phi$, $\psi$ are formulas, $i<\omega$, then $\phi\land \psi$, $\neg \phi$, and $\exists v _i\phi$, 
briefly $\exists_i \phi$\footnote{$\Diamond_i$ if one prefers the modal formalism} 
are formulas. 

The set of (restricted) formulas is denoted by $Fm_r$.
A structure is pair ${\M}=(M, \bold R)$ where $M$ is a non--empty set, $\bold R:\omega\to {}^{\omega}M$, 
and the interpretation of $R_i$ in $\M$,  in symbols $R_i(\bar{x})^{\M}$ is $\bold R_i$ $(i<\omega$).
The class of al structures is denoted by $\bold K$.For a sequence $s\in {}^{\omega}M$, $\M\in \bold K$, and a formula $\phi$
we write ${M}\models \phi[s]$
if $s$ satisfies ${\phi}$ in $\M$; this too is defined exactly like in first order logic. Having defined the basic semantical notions 
$\models$ is defined the usual way (like first order logic). 
We denote $\{s\in ^{\omega}M: \M\models \phi[s]\}$ by $\phi^{\M}$.
For $\phi\in \Fm_r$ and $\M\in \bf K$,  ${\sf mng}(\phi, \M)=\phi^{\M}$.
Let $\L_{\omega}=(Fm_r, \bold K, {\sf mng}, \models).$
$\L_{\omega}$ is referred to as a {\it finitary logic of infinitary relations.}
For provability we use the basic proof system in \cite[p. 157, \S 4.3]{HMT2} which is a natural extension of a complete calculas for $L_{\omega, \omega}$
expressed in terms of restricted formula.
To formulate our next strong incompleteness result we need.
\begin{definition}
A {\it formula schema} in a logic $\L$ is a formula $\sigma(R_1,\ldots, R_k)$
where $R_1,\ldots, R_k$ are relation symbol.
An {\it $\omega$ instance} or even simply an instance of $\sigma$
is a formula of the form $\sigma(\chi_1, \ldots, \chi_k)$
where $\chi_1, \ldots, \chi_k$
are formulas and each $R_i$ is replaced by $\chi_i$.
\end{definition}
Here  {\it type--free valid formula schema} is a new notion of validity defined by Henkin et 
al. \cite[Remark 4.3.65, Problem 4.16]{HMT2}, \cite[p. 487]{HHbook}. 
\begin{theorem}\label{negative}
\begin{enumarab}

\item For any $k\geq 1$, there is no finite schemata of $\L_{\omega}$ whose set $\Sigma$ of instances satisfies
$$\Sigma \vdash_{\omega+k} \phi\Longleftrightarrow \vdash _{\omega+k+1}\phi.$$

\item For any $k\geq 1$, there is no finite schemata of $\L_{\omega}$ whose set $\Sigma$ of instances satisfies
$$\models \phi\implies \Sigma\vdash_{\omega+k}\phi.$$
\end{enumarab}
\end{theorem}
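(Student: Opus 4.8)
The plan is to route everything through the algebraic bridge supplied by the Lindenbaum--Tarski construction for $\L_\omega$. First I would recall (from the proof system of \cite[\S 4.3]{HMT2}) that the relation $\vdash_{\omega+m}$ is sound and complete for the variety $\bold S\Nr_\omega\CA_{\omega+m}$: for a restricted formula $\phi$, $\vdash_{\omega+m}\phi$ holds iff $\phi$ is valid in every $\A\in\bold S\Nr_\omega\CA_{\omega+m}$, the index $\omega+m$ measuring the number of extra dimensions (auxiliary relation symbols) permitted as witnesses. Under this dictionary $\models$ corresponds to $\RCA_\omega=\bold S\Nr_\omega\CA_{\omega+\omega}=\bigcap_{m<\omega}\bold S\Nr_\omega\CA_{\omega+m}$. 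Two structural facts drive the argument: each $\bold S\Nr_\omega\CA_{\omega+m}$ is a variety (N\'emeti), hence closed under ultraproducts; and a finite schema $\Sigma$ translates, instance by instance, into finitely many equations, since each instance $\sigma(\chi_1,\dots,\chi_r)$ becomes the equation $\sigma=\top$ read off the term algebra, universally quantified over the schematic symbols. Thus ``$\A\models\Sigma$'' is an elementary---indeed equational---condition, so that ${\sf Mod}(\Sigma)\cap\bold S\Nr_\omega\CA_{\omega+m}$ is again a variety, in particular preserved under ultraproducts by \Los's theorem.

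For part (1) I would argue by contradiction. Suppose $\Sigma$, the instance set of a finite schema, satisfied $\Sigma\vdash_{\omega+k}\phi\iff\vdash_{\omega+k+1}\phi$. Taking $\phi$ to be a member $\sigma$ of $\Sigma$ (for which $\Sigma\vdash_{\omega+k}\sigma$ is trivial) yields $\vdash_{\omega+k+1}\sigma$, so every instance of $\Sigma$ is valid throughout $\bold S\Nr_\omega\CA_{\omega+k+1}$. I would then invoke the transfinite Monk separation at level $k$: a sequence $(\A_j)_{j<\omega}$ with $\A_j\in\bold S\Nr_\omega\CA_{\omega+k}\sim\bold S\Nr_\omega\CA_{\omega+k+1}$ whose ultraproduct $\prod_{j<\omega}\A_j/U$ lies in $\bold S\Nr_\omega\CA_{\omega+k+1}$. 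Since $\Sigma$ holds in this ultraproduct and ``$\A\models\Sigma$'' is elementary, \Los's theorem yields a $U$--large set of indices $j$ with $\A_j\models\Sigma$; fix one such $\A_{j_0}$. As $\A_{j_0}\in\bold S\Nr_\omega\CA_{\omega+k}$ and $\A_{j_0}\models\Sigma$, soundness of the consequence relation $\vdash_{\omega+k}$ over $\bold S\Nr_\omega\CA_{\omega+k}$ gives $\Sigma\vdash_{\omega+k}\phi\Rightarrow\A_{j_0}\models\phi$; with the assumed equivalence this forces $\vdash_{\omega+k+1}\phi\Rightarrow\A_{j_0}\models\phi$ for all $\phi$. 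By completeness of $\vdash_{\omega+k+1}$ the antecedent ranges over exactly the equational theory of $\bold S\Nr_\omega\CA_{\omega+k+1}$, so $\A_{j_0}$ satisfies all of it and hence belongs to that variety---contradicting $\A_{j_0}\notin\bold S\Nr_\omega\CA_{\omega+k+1}$.

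Part (2) is the same argument with the target variety replaced by $\RCA_\omega$. Here one may assume $\Sigma$ is sound (all instances valid), since an inconsistent $\Sigma$ makes $\models\phi\Rightarrow\Sigma\vdash_{\omega+k}\phi$ hold vacuously by explosion and is of no interest. Soundness of $\vdash_{\omega+k}$ over $\bold S\Nr_\omega\CA_{\omega+k}$ turns the hypothesis into the inclusion $\bold S\Nr_\omega\CA_{\omega+k}\cap{\sf Mod}(\Sigma)\subseteq\RCA_\omega$: any $\A$ on the left models every valid $\phi$ (because $\models\phi\Rightarrow\Sigma\vdash_{\omega+k}\phi\Rightarrow\A\models\phi$), hence is representable. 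To contradict this I would use the transfinite lift of Monk's 1969 theorem---a sequence $\A_j\in\bold S\Nr_\omega\CA_{\omega+k}\sim\RCA_\omega$ with $\prod_{j<\omega}\A_j/U\in\RCA_\omega$. The representable ultraproduct models all valid formulas, in particular the (sound) schema $\Sigma$; so \Los again delivers some $\A_{j_0}\models\Sigma$, which by the displayed inclusion would be representable, contradicting $\A_{j_0}\notin\RCA_\omega$.

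The main obstacle is the construction of the two \emph{transfinite Monk separations}, i.e. the sequences realising the strict, ultraproduct--collapsing inclusions $\bold S\Nr_\omega\CA_{\omega+k}\supsetneq\bold S\Nr_\omega\CA_{\omega+k+1}$ and $\bold S\Nr_\omega\CA_{\omega+k}\supsetneq\RCA_\omega$. In finite dimension $n$ these are precisely Monk's non--finite--axiomatizability construction \cite{Monk} and its sharpenings by Maddux \cite{Maddux} and Andr\'eka \cite{Andreka}: colour--based algebras of finite chromatic number converging to a good limit. The genuinely technical step is lifting them from finite $n$ to the base dimension $\omega$ while preserving both the neat--reduct level and the ultraproduct behaviour; I would do this by inflating the finite--dimensional separating algebras to $\omega$ dimensions through a dilation/reduct construction, exactly the ``lifting to the transfinite'' flagged earlier and carried out in the appendix. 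Everything else---the Lindenbaum--Tarski dictionary, the variety/ultraproduct bookkeeping, and the two \Los--theorem pigeonholing steps---is routine once these separations are in hand.
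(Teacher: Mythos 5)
Your overall skeleton is the same as the paper's: translate $\vdash_{\omega+m}$ into validity over $\bold S{\sf Nr}_{\omega}\CA_{\omega+m}$, note that a finite schema becomes finitely many equations, and then defeat any such schema by exhibiting a sequence of algebras in the larger variety but outside the smaller one whose ultraproduct lands in the smaller one, so that \L{o}\'{s} pigeonholes a counterexample. That reduction is exactly what the paper does, and your two \L{o}\'{s} arguments are sound.

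The genuine gap is in the separating algebras you propose for item (1). You identify the needed ``transfinite Monk separation'' with Monk's 1969 construction and its Maddux/Andr\'eka sharpenings: colour-based algebras of finite chromatic number converging to a representable limit. Those algebras witness $\A_j\in \bold S{\sf Nr}_n\CA_{n+j}\sim \RCA_n$ with representable ultraproduct, i.e.\ they separate $\RCA_n$ from the union of the levels; they do \emph{not} place any algebra in $\bold S{\sf Nr}_n\CA_{n+k}\sim \bold S{\sf Nr}_n\CA_{n+k+1}$ for a \emph{fixed} $k$ with an ultraproduct that climbs to level $k+1$. Indeed, the strictness of the consecutive inclusions $\bold S{\sf Nr}_n\CA_{n+k}\supsetneq\bold S{\sf Nr}_n\CA_{n+k+1}$ was Problem 2.12 of \cite{HMT1}, open long after Monk, and is settled only by the Hirsch--Hodkinson hypernetwork algebras $\C(m,n,r)=\Ca(H_m^{n+1}(\A(n,r),\omega))$, which satisfy the three properties the argument needs: $\C(m,n,r)\in{\sf Nr}_m\CA_n$, $\C(m,n,r)\notin\bold S{\sf Nr}_m\CA_{n+1}$, and $\Pi_{r/U}\C(m,n,r)\in\RCA_m$. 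This is the construction the paper actually feeds into the lifting argument of \cite{t} (a $\Gamma$-indexed ultraproduct over finite subsets of $\alpha$, with the $m$-rectangular, $m$-symmetric element $x_n$ controlling the relativization), and it handles items (1) and (2) simultaneously since the $r$-ultraproduct is already representable. Your chromatic-number algebras would, after lifting, still give item (2), but for item (1) the cited ingredient cannot deliver the consecutive-level separation; also note that the lifting is carried out inside the proof of this theorem following \cite{t}, not in the appendix, which is devoted to the unrelated splitting argument for $\RQEA_{\omega}$ over ${\sf RQA}_{\omega}$.
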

\begin{proof}
Observe that if $\phi$ is a formula of $\L$ and $\tau(\phi)$
is its corresponding term in the language of $\CA_{\omega}$ as defined in \cite[Definition 4.3.55]{HMT2},
then $\bold S\sf Nr_{\omega}\CA_{\omega+k}\models \tau(\phi)={\bf 1}\Longleftrightarrow \vdash_{\omega+k}\phi.$
Thus the existence of such a schemata
would imply finite axiomatizability by equations of
$\bold S\sf Nr_{\omega}\CA_{\omega+k+1}$ over $\bold S\sf Nr_{\omega}\CA_{\omega+k}$
since every schema translates into an equation in the language of
$\CA_{\omega}$. We show that this cannot happen.
We use (the notation and) idea in \cite[Theorem 3.1]{t}. In fact, we prove more than needed allowing the order type of variables to be any infinite ordinal $\alpha$.
We show that for any  positive $k\geq 1$, the variety $\bold S\sf Nr_{\alpha}\CA_{\alpha+k+1}$ is not axiomatizable by a finite schema 
over $\bold S{\sf Nr}_{\alpha}\CA_{\alpha+k}$. This gives the required result.

We start by the finite dimensional case, then we lift the construction to the transfinite. 
Fix $2<m<n<\omega$. Let $\mathfrak{C}(m,n,r)$ be the algebra $\Ca(\bold H)$ where $\bold H=H_m^{n+1}(\A(n,r), \omega)),$
is the $\CA_m$ atom structure consisting of all $n+1$--wide $m$--dimensional
wide $\omega$ hypernetworks \cite[Definition 12.21]{HHbook}
on $\A(n,r)$  as defined in \cite[Definition 15.2]{HHbook}.   Then $\mathfrak{C}(m, n, r)\in \CA_m$.
Then for any $r\in \omega$ and $3\leq m\leq n<\omega$, $\C(m,n,r)\in {\sf Nr}_m{\sf CA}_n$, $\C(m,n,r)\notin {\bold  S}{\sf Nr}_m{\sf CA_{n+1}}$
and $\Pi_{r/U}\C(m,n,r)\in {\sf RCA}_m$, cf. \cite[Corollaries 15.7, 5.10, Exercise 2, pp. 484, Remark 15.13]{HHbook}.
Take $$x_n=\{f\in H_n^{n+k+1}(\A(n,r), \omega); m\leq j<n\to \exists i<m, f(i,j)=\Id\}.$$
Then $x_n\in \C(n,n+k,r)$ and ${\sf c}_ix_n\cdot {\sf c}_jx_n=x_n$ for distinct $i, j<m$.
Furthermore (*),
$I_n:\C(m,m+k,r)\cong \Rl_{x_n}\Rd_m {\C}(n,n+k, r)$
via the map, defined for $S\subseteq H_m^{m+k+1}(\A(m+k,r), \omega)),$ by
$$I_n(S)=\{f\in H_n^{n+k+1}(\A(n,r), \omega):  f\upharpoonright {}^{\leq m+k+1}m\in S,$$
$$\forall j(m\leq j<n\to  \exists i<m,  f(i,j)=\Id)\}.$$
We have proved the (known) result for finite ordinals $>2$.

To lift the result to the transfinite,
we proceed like in \cite{t}, using the same lifting argument in {\it op.cit}.
Let $\alpha$ be an infinite ordinal. Let $I=\{\Gamma: \Gamma\subseteq \alpha,  |\Gamma|<\omega\}$.
For each $\Gamma\in I$, let $M_{\Gamma}=\{\Delta\in I: \Gamma\subseteq \Delta\}$,
and let $F$ be an ultrafilter on $I$ such that $\forall\Gamma\in I,\; M_{\Gamma}\in F$.
For each $\Gamma\in I$, let $\rho_{\Gamma}$
be an injective function from $|\Gamma|$ onto $\Gamma.$
Let ${\C}_{\Gamma}^r$ be an algebra similar to $\CA_{\alpha}$ such that
$\Rd^{\rho_\Gamma}{\C}_{\Gamma}^r={\C}(|\Gamma|, |\Gamma|+k,r)$
and let
$\B^r=\Pi_{\Gamma/F\in I}\C_{\Gamma}^r.$
Then we have $\B^r\in \bold {\sf Nr}_\alpha\CA_{\alpha+k}$ and
$\B^r\not\in \bold S{\sf Nr}_\alpha\CA_{\alpha+k+1}$.
These can be proved exactly like the proof of the first two items in \cite[Theorem 3.1]{t}. The second part uses that the element $x_n$ is {\it $m$--rectangular} 
and {\it $m$--symmetric}, in the sense that 
for all  $i\neq j\in m$, ${\sf c}_ix_n\cdot {\sf c}_jx_n=x_n$ and ${\sf s}_i^jx_nx \cdot  {\sf s}_j^ix_n =x_n$ (This last two conditions are not entirely indepedent \cite{HMT2}). 
This is crucial to guarantee that in the algebra obtained after relativizing 
to  $x_n$, we do not lose commutativity of cylindrifiers.  The relativized algebra stays inside $\CA_n$.

We know
from the finite dimensional case that $\Pi_{r/U}\Rd^{\rho_\Gamma}\C^r_\Gamma=\Pi_{r/U}\C(|\Gamma|, |\Gamma|+k, r) \subseteq \Nr_{|\Gamma|}\A_\Gamma$,
for some $\A_\Gamma\in\CA_{|\Gamma|+\omega}=\CA_{\omega}$.
Let $\lambda_\Gamma:\omega\rightarrow\alpha+\omega$
extend $\rho_\Gamma:|\Gamma|\rightarrow \Gamma \; (\subseteq\alpha)$ and satisfy
$\lambda_\Gamma(|\Gamma|+i)=\alpha+i$
for $i<\omega$.  Let $\F_\Gamma$ be a $\CA_{\alpha+\omega}$ type algebra such that $\Rd^{\lambda_\Gamma}\F_\Gamma=\A_\Gamma$.
Then $\Pi_{\Gamma/F}\F_\Gamma\in\CA_{\alpha+\omega}$, and we have proceeding like in the proof of item 3 in \cite[Theorem 3.1]{t}:
$\Pi_{r/U}\B^r=\Pi_{r/U}\Pi_{\Gamma/F}\C^r_\Gamma
\cong \Pi_{\Gamma/F}\Pi_{r/U}\C^r_\Gamma
\subseteq \Pi_{\Gamma/F}\Nr_{|\Gamma|}\A_\Gamma
=\Pi_{\Gamma/F}\Nr_{|\Gamma|}\Rd^{\lambda_\Gamma}\F_\Gamma
=\Nr_\alpha\Pi_{\Gamma/F}\F_\Gamma.$
But $\B=\Pi_{r/U}\B^r\in \bold S{\sf Nr}_{\alpha}\CA_{\alpha+\omega}$
because $\F=\Pi_{\Gamma/F}\F_{\Gamma}\in \CA_{\alpha+\omega}$
and $\B\subseteq \Nr_{\alpha}\F$, hence it is representable (here we use the neat embedding theorem).
The rest follows using a standard L\'os argument.

Item (2) follows also from the above proof using the same reasoning since the above proof also gives that $\RCA_{\omega}$ cannot be axiomatized by a finite schema over
$\bold S\sf Nr_{\omega}\CA_{\omega+k}$ for ay $k>0$.
\end{proof}

Henkin, Monk and Tarski formulated the so--called {\it finitizability problem $\sf FP$} this way:

{\it Devise an algebraic version of predicate
logic in which the class of representable algebras forms a finitely based variety}
\cite{Andreka, AT, f,  Sain,  Fer,  Bulletin,  v, r,  HHbook,  mlq, HMT2}.

The $\sf FP$ in a nut shell, seeks a Stone--like representability 
result for algebras of relations having infinite rank, equivalently a strong completeness theorem for
modifications of $\L_{\alpha}$ by either changing the syntax and /or guarding semantics 
obtaining an oprtimal fit}

The method of `guarding semantics' goes back to the seminal 1985 paper of Andr\'eka and Thompson 
\cite{AT} which proves possibly the first  (historically) strong positive result on finite schema axiomatizations of agebras of $\alpha$-- ary 
relations, $\alpha$ an ordinal, intercepting the path of a series of negative non--finite axiomatizabilty results.  For $\alpha<\omega$, 
this schema is a finite set of equations axiomatizing the resulting class of representable algebras  
with diagonizable top elements. The term {\it guarding} though was introduced much later in the seminal paper of Andr\'eka et al. \cite{avn}, where Andr\'eka, V. Benthem, and 
N\'emeti propse the guarded fragments of first order logic. 
The article \cite{v} is an excellent account  on this `fruitful contact between $\sf Crs$'s and guarded logics'.

One way to understand the subtle technique of guarding is to look from above.
One asks oneself what expressive resources will typically lead to undecidability 
and tries to avoid it by manipulating semantics. This view often also diffuses non--finite axiomatizability results.
For the infinite dimensional case we succeed to obtain a strong finite axiomatizability result by guarding semantics, but 
whether this yields decidability of the validity problem with respect to 
this  guarding  is not settled in this paper.

To get rid of the `severe incompleteness' result just proved for the common algebraic formalism of first order logic, 
one  guards semantics in the infinite dimension case, too, 
in analogy to the finite dimensional case.  More specfically, we will 
obtain an exact infinite analogue of the variety ${\sf G}_n$; significantly distinct from ${\sf G}_{\omega}$ in that 
it admits a strictly finite axiomatization. 
We start from the logic side. The following theorem relates the semantics of a (possibly infinitary) formula $\phi$ 
in a {\it generalized model} to the semantics of its {\it guarded version}, denoted by $\sf guard(\phi)$,  in the standard part of the model expanded with the guard. 
Let  $\alpha\leq \omega$. Let $\L_{\alpha}$ denote the algebraizable formalism corresponding to $\CA_{\alpha}$ as defined above allowing a sequence of $\alpha$--many variables, cf.
\cite[\S 4.3]{HMT2}. We dealt with the special $\L_{\omega}$ in the statement of the previous theorem (but 
the proof covered $\L_{\alpha}$  for any 
infinite ordinal $\alpha$.) 
By induction on the complexity of formulas the following can be proved:
\begin{theorem}\label{adm} 
Let $L$ be a signature taken in $\L_{\alpha}$. 
Let $(M, V)$ be a generalized model in $L$,
that  is, $M$ is an $L$--structure and $V\subseteq {}^{\alpha}M$ is the set of {\it admissible assignments}. Assume that $R$ is an $\alpha$--ary
relation symbol outside $L$. For $\phi$ in $L$,  let $\sf guard(\phi)$ be the formula obtained from $\phi$ by relativizing all quantifiers
to one and the same atomic formula $R(\bar{x})$ and let 
${\sf Guard}(M,V)$ be the model expanding $M$ to $L\cup \{R\}$ by
interpreting $R$ via $R(s)\iff s\in V$. 
Then the following holds:  
$$M, V, s\models \phi\Longleftrightarrow {\sf Guard} (M, V),s\models {\sf guard}(\phi),$$
where $s\in V$ and $\phi$ is a formula.

\end{theorem}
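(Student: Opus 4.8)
The plan is to prove the biconditional by a single induction on the structure (complexity) of $\phi$, exactly as the statement anticipates. The one feature that makes the whole argument run is the chosen interpretation of the new symbol $R$ in ${\sf Guard}(M,V)$: it is nothing but the characteristic relation of the admissible set $V$, so that for every $t\in {}^{\alpha}M$ one has ${\sf Guard}(M,V), t\models R(\bar{x})\iff t\in V$. Keeping this identity in view, the induction hypothesis I would carry along is the statement of the theorem relativized to the current subformula: for every $t\in V$, $M, V, t\models \psi\iff {\sf Guard}(M,V), t\models {\sf guard}(\psi)$.

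First I would dispatch the base case. For atomic $\phi$ (either $v_k=v_l$ or $R_l(\bar{x})$ with $R_l\in L$) the operation ${\sf guard}$ leaves $\phi$ unchanged, since $\phi$ contains no quantifiers. As $R\notin L$, the reduct of ${\sf Guard}(M,V)$ to $L$ is just $M$, so atomic $L$-formulas receive identical truth values in $M$ and in ${\sf Guard}(M,V)$; and since the generalized-model semantics evaluates atomic formulas precisely as the classical one does, the equivalence holds for any $s\in V$. The Boolean cases are equally routine: ${\sf guard}$ commutes with $\neg$ and $\land$, both satisfaction relations treat these connectives pointwise without disturbing the assignment, and the same $s\in V$ passes unchanged to the immediate subformulas, so the induction hypothesis applies directly.

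The substantive case is the quantifier step $\phi=\exists_i\psi$, where ${\sf guard}(\exists_i\psi)=\exists_i(R(\bar{x})\land {\sf guard}(\psi))$. Here I would unwind both sides and match them conjunct by conjunct. On the generalized side, $M, V, s\models \exists_i\psi$ means there is $t\in V$ with $t\equiv_i s$ and $M, V, t\models \psi$; because $t\in V$, the induction hypothesis rewrites the last conjunct as ${\sf Guard}(M,V), t\models {\sf guard}(\psi)$. On the standard side, ${\sf Guard}(M,V), s\models \exists_i(R(\bar{x})\land {\sf guard}(\psi))$ means there is some $t\equiv_i s$ in ${}^{\alpha}M$ with ${\sf Guard}(M,V), t\models R(\bar{x})$ and ${\sf Guard}(M,V), t\models {\sf guard}(\psi)$. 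These two descriptions coincide exactly because $R(\bar{x})$ holds at $t$ iff $t\in V$: the guard converts the restriction of the quantifier's range to $V$, which is built into the generalized semantics, into an explicit relativizing conjunct in the classical semantics, where the quantifier otherwise ranges over all of ${}^{\alpha}M$.

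I do not expect a genuine obstacle; the only point demanding care is the bookkeeping in the quantifier step. Specifically, one must check that the induction hypothesis may legitimately be invoked at the witness $t$ — which is licensed because the generalized semantics guarantees $t\in V$ — and that the interpretation of $R$ is exactly $V$, so that the relativization neither under- nor over-shoots the intended range. Once these two facts are lined up, the recursion never descends to an assignment outside $V$, and the induction closes. Everything else is mechanical.
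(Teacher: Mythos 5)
Your proof is correct and follows exactly the route the paper indicates: the paper offers no details beyond the remark that the theorem ``can be proved by induction on the complexity of formulas,'' and your induction (trivial atomic and Boolean cases, with the quantifier step matching the admissibility restriction $t\in V$ against the guard conjunct $R(\bar{x})$) is precisely that standard relativization argument, carried out with the right bookkeeping.
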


For $\alpha$ finite we have already dealt with the $\alpha$--variable guardred fragments of $L_{\omega, \omega}$ 
where the admissable assignments where  sets of the form $V\subseteq {}^{\alpha}U$, $U$ a non--empty set. 
We considered the two cases when $V$ is diagonizable and when $V$ is locally square.
Such choices of `guards' gave the finitely axiomatizable varieties ${\sf D}_{\alpha}$ and ${\sf G}_{\alpha}$ 
whose modal (set) algebras have top elements $V$ as specified, respectively.  
Such varieties have a decidable universal theory, too, witness 
theorem \ref{m} and \cite{AT, Fer, mlq}.  

To get finite axiomatizability we use the so--called 
{\it semigroup approach} in algebraic logic that proved efficient in solving the $\sf FP$ for first order logic without equality.
The choice of the semigroup controls the signature of our algebras expanding the signature of 
$\CA_{\omega}$s. 
One takes only those substitution operators indexed by transformations in this fixed in advance subsemigroup, call it $\T$ 
of $(^{\omega}\omega, \circ)$ to be interpreted in set algebras the usual way like in polyadic set algebrs.  If $\T$  happens
to be finitely presented;  for a start one gets a finite signature and a potential complete finite axiomatization. 
This finite signature {\it expands}
that of $\CA_{\omega}$ in the sense that the infinitely many operations of 
$\CA_{\omega}$  become term
definable in this (finite new) signature.  But this potential can (and will) be attained. 
In this hitherto obtained finite signature, one stipulates a {\it finite} set 
of equations,  enforcing representablity  (in the polyadic equality sense). 
This represenatbility result is proved using a neat embedding theorem analogous to Henkn's neat embedding theorem for $\CA_{\omega}$s; 
it is a relativized version thereof. In this context, cylindrifiers and diagonal elements are interpreted in the representing class 
of set algebras like in cylindric set algebras, and the substitutions operators, as indicated,  are interpreted like in polyadic set algebras.

The difference here is that the top elements of the newly obtained set algebras are a union of $\omega$--dimensional cartesian spaces that may not be 
disjoint. But such set algebras plainly share the intuitive geometric nature of generalized cylindric set algebras of the same dimension $\omega$, 
whose top elements are unions of such 
$\omega$--dimensional cartesian spaces that {\it are disjoint}. Dropping  the condition of disjointness kills 
commutativity of cylindrifiers,  but not completely.  In this new guarded semantics a
weaker commutativty property involving cylindrifiers and substitutions 
given in  item $\sf C_9^*$ of \cite[Definition 6.3.7]{Fer} is preserved.

We formulate the next theorem \ref{c} fully proved in \cite{mlq} 
as a  Stone--like representability result for algebras of 
relations of infinite rank drawing the analogy with theorem \ref{m}.
But first we give an example of a concrete 
{\it rich semigroup}; rather than giving the general definition. 
Such semigroups 
are defined (abstractly) in \cite{Sain, AU}.  The concrete instance recalled next  makes the idea of 
{\it generating infinitely many operations using only finitely many}
more tangible. The key idea here is the presence of 
a successor like element among the elements of the semigroup.  Iterating this operator generates $\omega$-- many extra dimensions 
paving the way for a neat embedding 
theorem. (This idea can be traced back to the work of Craig addressng finitizability 
attempts as well). 

The semigroup $\sf T$ generated by the set of 
transformations 
$\{[i|j], [i,j], i, j\in \omega, \sf suc, \sf pred\}$ defined on $\omega$ is an
example of a countable rich subsemigroup of $(^\omega\omega, \circ)$.
Here $\sf suc$ abbreviates the {\it successor function}
on $\omega$, ${\sf suc}(n)=n+1$, and $\sf \sf pred$ acts as its quasi--right inverse, the 
{\it predecessor
function} on $\omega$, defined by
${\sf pred}(0)=0$ and for other $n\in \omega$, ${\sf pred}(n)=n-1.$
In every rich semigroup there are two elements $\pi$ and $\sigma$ called {\it distinguished elements} where $\pi$ is a `successor--like' transformation and 
$\pi$ is its quasi--inverse, of which  
the transformation $\sf pred$ 
is a special case.

Recall that $\mathcal{B}(V)$ denotes the 
Boolean algebra $\langle\wp(V), \cup, \cap, \sim, \emptyset, V\rangle$.
The next theorem is an (algebraic) solution to the finitizability problem for first order logic with equality:

\begin{theorem}\label{c} Let $\T$ be a countable rich finitely presented subsemigroup 
of $(^{\omega}\omega, \circ)$ 
with distinguished elements $\pi$ and $\sigma$. 
Assume that $\sf T$ is presented by the finite set of transformations $\sf S$ such that $\sigma\in \sf S$. 
Then the  class $\sf Gp_T$ of all $\omega$--dimensional set algebras of the form
$\langle\mathcal{B}(V), {\sf C}_0, {\sf D}_{01}, {\sf S}_{\tau}\rangle_{\tau\in \sf S},$
where $V\subseteq {}^{\omega}U,$  $V$ a non--empty union of cartesian spaces, is a finitely axiomatizable variety.
All the operations ${\sf C}_i, {\sf D}_{ij}$, $i,j\in \omega\sim \{0\}$ are term definable.
\end{theorem}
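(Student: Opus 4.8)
The plan is to realize $\sf Gp_T$ as the equational class ${\sf Mod}(\Sigma)$ of a \emph{finite} set $\Sigma$ of equations in the finite signature $t=\{+,\cdot,-,0,1,{\sf C}_0,{\sf D}_{01},{\sf S}_\tau\}_{\tau\in\sf S}$ (where, as is standard for representation theorems, $\sf Gp_T$ is understood closed under isomorphic copies and subalgebras). Since ${\sf Mod}(\Sigma)$ is automatically a variety, this establishes both claims at once. The argument splits, as usual, into soundness ($\sf Gp_T\subseteq{\sf Mod}(\Sigma)$) and the harder completeness (every model of $\Sigma$ is representable in $\sf Gp_T$), the latter a relativized analogue of Henkin's neat embedding theorem. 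Throughout, the successor-like $\pi$ and its quasi-inverse $\sigma$ are the engine: iterating $\pi$ both expresses the missing $\CA_\omega$-operations and, in the completeness proof, reserves the infinitely many ``spare'' dimensions needed to furnish Henkin witnesses.

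I would start with term definability, the most concrete part. Since $\sf S$ generates $\T$ and the operators ${\sf S}_\tau$ compose in accordance with composition in $\T$, the operator ${\sf S}_\tau$ for \emph{every} $\tau\in\T$ is a composite of the generating ${\sf S}_\rho$, $\rho\in\sf S$. Because $\pi$ shifts indices upward and $\sigma$ shifts them back, conjugating the two primitive non-Boolean operations by iterated shifts produces the rest: one sets ${\sf C}_i={\sf S}_{\sigma^{i}}{\sf C}_0{\sf S}_{\pi^{i}}$ and defines each ${\sf D}_{ij}$ $(i,j\in\omega\sim\{0\})$ as ${\sf S}_\tau{\sf D}_{01}$ for a suitable $\tau\in\T$ carrying $\{0,1\}$ onto $\{i,j\}$. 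As $\pi^i,\sigma^i,\tau$ are words in $\sf S$, these are genuine $t$-terms; this proves the last sentence of the theorem and shows the $t$-reduct is term-equivalent to a relativized $\CA_\omega$-type algebra.

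Next I would write down $\Sigma$ and check soundness. The equations fall into three finite blocks: the Boolean axioms; the finitely many defining \emph{relations} of the presentation of $\T$, transcribed verbatim as equations among the ${\sf S}_\rho$ (this is exactly where finite presentedness of $\T$ is used, and is what keeps $\Sigma$ finite); and the polyadic-equality/cylindric laws restricted to ${\sf C}_0,{\sf D}_{01}$ and the generating substitutions. The one essential subtlety is that, because the top element $V$ is a union of Cartesian spaces that need \emph{not} be disjoint, full commutativity ${\sf C}_i{\sf C}_j={\sf C}_j{\sf C}_i$ fails in $\sf Gp_T$; I would therefore include instead the weakened law $\sf C_9^*$ of \cite[Definition 6.3.7]{Fer}. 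Soundness is then a routine verification in an arbitrary $\langle\mathcal B(V),{\sf C}_0,{\sf D}_{01},{\sf S}_\tau\rangle_{\tau\in\sf S}$, the only nontrivial clause being $\sf C_9^*$ in place of the stronger law.

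The hard part is completeness: given abstract $\A\models\Sigma$, I must produce a set representation whose top element is a (non-disjoint) union of $\omega$-dimensional Cartesian spaces. The plan is the relativized neat embedding theorem, built on the ``room-making'' power of $\pi$. Concretely, I would use $\pi$ to embed $\A$ into a dilation $\B$ in which, after shifting, infinitely many dimensions are free — precisely what an ordinary $\CA_\omega$ lacks and what makes the finite-dimensional Henkin construction break down in dimension $\omega$. One then runs a Henkin-style saturation, eliminating each cylindrifier by a thin witness drawn from a fresh spare dimension, exactly as in the rich-algebra proof of \cite[Theorem 3.2.5]{HMT2}, obtaining a representation of $\B$; restricting along the neat embedding returns a representation of $\A$. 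I expect the main obstacle to be maintaining coherence of the added dimensions using only the \emph{weakened} commutativity $\sf C_9^*$: since cylindrifiers no longer fully commute, witnesses supplied in different dimensions need not patch together over a disjoint union, and one is forced to glue them over Cartesian spaces that genuinely overlap — which is simultaneously why the representing top element is a non-disjoint union and the technical crux of the whole argument.
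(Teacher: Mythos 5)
Your plan coincides with the strategy the paper itself outlines for this theorem (which it states without proof, deferring the details to \cite{mlq}): finite presentation of $\T$ yields the finite signature and the finitely many defining relations among the ${\sf S}_\rho$; the successor-like $\pi$ and its quasi-inverse $\sigma$ give term definability of all ${\sf C}_i$, ${\sf D}_{ij}$ by conjugation with iterated shifts; commutativity of cylindrifiers is replaced by the weakened law $\sf C_9^*$ of \cite[Definition 6.3.7]{Fer}; and completeness is a relativized neat embedding theorem in which iterating $\pi$ supplies the spare dimensions for Henkin witnesses, the non-disjointness of the Cartesian spaces in the top element being exactly the price of the weakened commutativity. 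This is essentially the same approach, correctly identified.
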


{\bf It is quite reasonable to take the concept of {\it a finite proof} as the most fundamental concept in logic. 
From this predominantly philosophically motivated point of view semantics come second and 
perhaps merely as
a theoretical tool. Manipulating semantics to achieve finite Hilbert style axiomatizations no longer becomes mere tactical opportunism nor 
a challenging mental exercise, nor an intellectual enterprise. 
It rather becomes a  pressing need.}

In this respect, the logical counterpart of the first part of the previous theorem avoiding the severe incompleteness theorem obtained in theorem \ref{negative} 
is:
\begin{theorem}\label{f}
Let $\T$ be a semigroup as specified in the previous 
theorem. Let $\L_{\T}$ be the algebraizable logic corresponding to ${\sf Gp}_{\T}$ (in the Blok--Pigozzi sense). 
Then the
satisfiability relation $\models_w$ induced by ${\sf Gp}_{\T}$
admits a  finite recursive sound and  complete
proof calculus for the set of type--free valid formula schemata which involves only type--free valid formula schemata  $\vdash$ say,
with respect to $\models_w$, so that $\Gamma\models_w \phi\iff \Gamma\vdash \phi$.  This recursive complete 
axiomatization is a  Hilbert style axiomatization, and there is a translation recursive function $\sf tr$ mapping
$L_{\omega, \omega}$ formulas  to  formulas in $\L_{\T}$
preserving $\models_w$ (but not the usual validity $\models$).
\end{theorem}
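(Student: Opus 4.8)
The plan is to transfer the finite axiomatizability of the variety $\Gp_{\T}$, established in theorem \ref{c}, to the companion logic $\L_{\T}$ by invoking the Blok--Pigozzi theory of algebraizable logics. The structural input is that $\L_{\T}$ is, by hypothesis, algebraizable with equivalent algebraic semantics the variety $\Gp_{\T}$; the witnessing defining equations and equivalence formulas are finite in number precisely because $\T$ is finitely presented, so that the signature carries only finitely many connectives. Thus both ingredients of a successful transfer --- a finite equational basis on the algebra side and a finite set of structural transformations linking the two sides --- are available.

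First I would recall the bridge between $\vdash_{\L_{\T}}$ and the equational consequence $\models_{\Gp_{\T}}$. Algebraizability supplies a translation $\tau$ sending each formula $\phi$ to its defining equation, and an inverse translation sending each equation $p\approx q$ to the equivalence formula built from the derived biconditional $\leftrightarrow$. Under this correspondence $\Gamma\vdash_{\L_{\T}}\phi$ holds if and only if $\tau[\Gamma]\models_{\Gp_{\T}}\tau(\phi)$. Taking the finite equational basis of $\Gp_{\T}$ furnished by theorem \ref{c} and pulling each defining equation back through the equivalence formula yields a finite set of type--free valid formula schemata; adjoined to a finite axiomatization of the underlying Boolean--plus--cylindric base together with the finitely many rules witnessing algebraizability, these schemata constitute the sought Hilbert style calculus $\vdash$. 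Since $\models_w$ is by definition the semantic consequence induced by $\Gp_{\T}$, the biconditional $\Gamma\models_w\phi\iff\Gamma\vdash\phi$ is then immediate from the soundness and completeness of equational logic for the variety, delivering soundness and strong completeness in one stroke.

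For the translation $\sf tr$ from $L_{\omega,\omega}$, the essential fact is that, by theorem \ref{c}, every cylindrifier ${\sf C}_i$ and diagonal ${\sf D}_{ij}$ with $i,j\in\omega$ is term definable from the finitely many operators in the signature of $\Gp_{\T}$, the requisite index $i$ being reached by iterating the successor--like distinguished element $\pi$ and its quasi--inverse $\sigma$. I would define $\sf tr$ by recursion on formula complexity, sending each quantifier $\exists_i$ and each diagonal of $L_{\omega,\omega}$ to the corresponding defining term, and then verify by a routine induction that $\sf tr$ preserves $\models_w$. That $\sf tr$ fails to preserve ordinary validity $\models$ is forced by the guarded character of $\Gp_{\T}$: its top elements are unions of Cartesian spaces that need not be disjoint, so full commutativity of cylindrifiers is lost and only the weaker law of item $\sf C_9^*$ of \cite[Definition 6.3.7]{Fer} survives.

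The main obstacle I anticipate lies not in the Blok--Pigozzi transfer, which is essentially formal once algebraizability and finite axiomatizability are secured, but in the bookkeeping for $\sf tr$. One must check that the inductive clause for $\exists_i$ --- which unwinds $i$ applications of the successor operator before applying the single retained cylindrifier and then winds them back via $\sigma$ --- interacts correctly with the surviving weakened commutativity, so that the translate of a genuinely $L_{\omega,\omega}$--valid schema is $\models_w$--valid while no spurious validities are introduced. This delicate but elementary verification, rather than any deep obstruction, is where the real care is needed.
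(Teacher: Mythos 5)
Your proposal is correct and follows exactly the route the paper intends: the paper offers no in-text proof of this theorem, deferring to \cite{mlq} and presenting it as ``the logical counterpart'' of theorem \ref{c}, which is precisely the Blok--Pigozzi transfer of the finite equational basis of ${\sf Gp}_{\T}$ to a finite Hilbert calculus, together with the translation $\sf tr$ built from the term definability of the discarded cylindrifiers and diagonals via the successor-like element. The only detail worth adding to your sketch is the clause of $\sf tr$ on atomic formulas, where a finitary relation symbol of $L_{\omega,\omega}$ must be rendered as an $\omega$-ary (restricted) relation symbol not depending on the coordinates beyond its arity; this is routine but should be stated.
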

Here {\it type--free valid formula schemata} is the plural of {\it type--free valid formula schema}. This is a new notion of validity defined by Henkin et 
al. \cite[Remark 4.3.65, Problem 4.16]{HMT2}, \cite[p. 487]{HHbook}. 

The solution in \cite{mlq} where Tarskian semantics are  broadened, but  only 
slightly, possibly stands against
Henkin, Monk and Tarski's expectations, for the second problem in the first quote \cite[pp.416]{HMT1} {\it does not} prohibit the option of changing  the semantics, 
that is alter the notion of representability,
as long as it is `concrete and intuitive' enough. This, in turn, possibly indicates that their conjecture as 
formulated in the last two lines of their quote from \cite[pp.416]{HMT1} (recalled above) was either too hasty or/ and unfounded.
The aforementioned positive finitizabitity result 
formulated in theorem \ref{c} is an infinite 
analogue of the polyadic equality analogue of the classical 
Andr\'eka--Thompson--Resek theorem \cite{AT} proved by 
Ferenczi \cite{Fer}. 

{\bf In set algebras considered the relativization ({\it guarding}) is the same.}

For first order logic the {\it Entscheidungsproblem}
posed by Hilbert has a negative answer: The validity problem of first order logic is undecidable. 
This is inherited by its finite variable fragments as long as the number of variables used is at 
least three. The validity problem for $\L_{\T}$ is not known  when 
$\T$ rich and finitely presented. There could be one such $\sf T$ that renders decidability. 
Algebraically, we do not know whether the equational theory
of $\bold I{\sf Gp}_{\T}$ is decidable  or not.  Many examples of such semigroups are given 
in \cite{Sain, mlq}.

We refer to the guarding dealt with in theorem \ref{adm} as {\it global guarding}. 
as opposed to 
{\it local guarding} a notion that we describe in the next.

Fix $2<n<\omega$. We have dealt with the cases when $V={}^nU$ (here there is no guarding) 
and the case when $V$ is {\it locally square}; a guarding that we saw diffused the negative properties of `square Tarskian semantics'.
This way of {\it global guarding} has led to the discovery of a whole landscape of multimodal logics having nice modal behaviour (like decidability) 
with the multimodal logic whose modal algebras are the class of relativized set algebras `at the bottom' 
and Tarskian semantics with its undesirable properties (like undecidability) is only the top of an iceberg.
Below the surface a treasure of nice multimodal logics was discovered. 
Viewed differently, first order logic with $n$ variables is a {\it dynamic logic} of variable assignments, 
whose atomic processes shift values in registers $x_0, \ldots, x_{n-1}$. 
This view opens up a heirarchy of  fine structures underneath standard predicate logic using $n$ variables,  
the latter becomes the undecidable theory of one particular mathematical class of   `{rich assignment models}'
or squares. Furthermore, it lacks a completeness theorem using Hilbert style axiomatizations.

But there are other assignments models that are  {\it not as rich, hence not as expressive} 
hence potentially decidable.  Guarding can be viewed as the process of finding logics in this landscape that are reasonably  expressive,  
share positive metalogical properties 
of first order, like interpolation and even improve 
on this by 
completeness and decidability.
The idea is that we want to find a semantics that give just the right action 
while additional effects of square set--theoretic representations are separated out as negotiable decisions of formulation 
that can threaten completeness and decidability.
Using square semantics is a voluntary commitment to one particular mathematical
implication whose complexity seems to be an overkill. An insidious term often confuses this issue 
is the `concreteness of set theoretic models' and the pre--assumption of the {\it canonicity} of `simple' square ones. 
Such a conventional view can be harmful; preventing us from avoiding 
hereditory mistakes of old paradigms.

When we free ourselves from such prejudices, 
such `extended first order logics' suggest further interesting applications. It becomes of practical interest just how 
much of predicate logic is used in mathematical proofs. 
Can decidable  fragments of predicate logic be used instead? 
For example working logicians in linguistics or computer science have the gut feeling that the phenomena at hand are largely 
decidable. It is hard though to pin down mathematically such `feelings' or intuition.
Likewise there could be useful decidable systems
of arithmetic or other parts
of mathematics using such ideas. For example what is the theory
of natural numbers with all possible families of variable assignments?
The thrust of this line
of research suggests that the {\it genuine logical core} of first order logic 
may well be decidable and that undecidability resulted from 
using `more than needed'.
Proving decidability for guarded 
fragments of first order logic went historically via the {\it mosaic method} of 
N\'emeti's, later developed for guarded fragments to so--called quasi--models, which is a mixture of {\it filtration}, a well known technique for proving decidability results
in modal logic, mosaics and 
semantic tableaus for first order logic. 
Such a technique also works for the loosely guarded fragments \cite{v} the result we used in the proof of 
theorem \ref{c}.

\subsection{Local guarding}

Such locally guarded representations essentially amounts to working with the varieties $\bold S{\sf Nr}_n\CA_m$ ($n\leq m<\omega$) 
as defined in \cite[Definitions 2.6.27]{HMT1} with $m$ measuring how close or rather `how far' we are from an ordinary representation.  
An algebra in $\bold S{\sf Nr}_n\CA_m$, for $n<m\leq \omega$ possesses a so--called {\it $m$--flat relativized representation} 
which is an `$m$--approximation' to an ordinary representation. 
We stipulate that an $\omega$--flat 
representation is just an ordinary one for algebras with countably many atoms. Roughly, the parameter $m$ measures how much we have to `zoom in 
by a movable window', so that we mistake the $m$--flat representation for an ordinary (genuine) one. 
Such notions are discussed and studied extensively for relation algebras in 
\cite[Chapter 13]{HHbook}. In \cite{mlq} analogous investigations for cylindric--like  algebras were intiated.
To the best of our knowledge, such investigations (before \cite{mlq}) seem to be lacking or at 
best very rare.
By Henkin's Neat Embedding theorem, namely, using the notation of \cite[Theorem 3.2.10]{HMT1}, 
$\RCA_n={\bold S}\sf Nr_n\CA_{\omega}$ one might be tempted to attribute such negative results for $2<n<\omega$,
to the existence of infinitely many spare dimensions expressed in the superscript 
$\omega$. But we shall see below that  one should not run  to such an unfounded and hasty 
conclusion. 
The presence of {\it only $n+3$ spare dimensions} suffices to obtain negative results proved to hold for ${\sf RCA}_n$.
Indeed, as it happens, the classical negative results in \cite{Hodkinson, HH} on {\it atom--canonicity and complete representations} 
will be generalized to include the varieties $\bold S{\sf Nr}_n\CA_{n+k}$ for $2<n<\omega$ and $k\geq 3$ in theorem \ref{can}.
 
Using the semantical notion of flatness, together with the results obtained in  theorem \ref{can} and those stated in 
the forthcoming theorem \ref{flat}, we 
infer that for any $m\geq n+3$ the variety of algebras having $m$--flat representations is not atom--canonical, and the 
class of algebras having 
{\it complete} $m$--flat representations is not first order definable.
We learn from the above results, that for such proper approximations of ${\sf RCA}_n$ negative properties persist. 
Here by `proper approximations' we mean that for $2<n<\omega$ and finite $m>n$ 
${\sf RCA}_n\subsetneq \bold S{\sf Nr}_n\CA_{m}$  and 
$\bigcap_{k\in \omega, k\geq 3} \bold S{\sf Nr}_n\CA_{n+k}={\sf RCA}_n$ \cite[Theorem 2.6.34]{HMT1}. Furthermore, for $2<n<\omega$, 
the sequence $(\bold S{\sf Nr}_n\CA_m: n+1<m\leq \omega)$ is {\it strictly decreasing} (with respect to inclusion of classes) `converging' to ${\sf RCA}_n$, 
cf. \cite[Problem 2.12]{HMT1} and its answer provided 
in \cite[Theorem 15.1]{HHbook}.
This last view is purely syntactical. 

{\it Till the end of this subsection fix $2<n<m<\omega$.}

Given $\A\in \CA_n$ having an  $m$--flat representation with domain $M$, 
one forms 
an $m$--dilation $\B$ of $\A$ as follows: The algebra $\B$ has top element
the so--called {\it $n$--Gaifmann hypergraph}, having set of hypereges the set
${\sf C}^n(M)=\{s\in {}^{m}M: \rng(s) \text { is an $n$--clique}\}$; so $\B$ is a relativized set algebra of dimension $m$, with 
the operations in $\B$ are induced by 
the so-called {\it clique 
guarded semantics}; denoted by $\models_c$.  Here an $n$--clique is a natural generalization of the notion of 
cliques in graph theory to hypergraphs; every 
$n$--tuple from $\rng(s)$ is labelled by the top element of the algebra $\A$. 

Let $L(A)^n$ is the first order language using $m$ 
variables in a signature consisting of one $n$--ary relation symbol for 
each  element of $\A$, cf. \cite[Propsition 19.4--19.5]{HHbook}. 

\begin{definition}\label{clique} The {\it clique guarded semantics $\models_c$} are defined inductively. 
For atomic formulas and Boolean connectives they are defined
like the classical case and for existential quantifiers
(cylindrifiers) they are defined as follows:
for $\bar{s}\in {}^mM$, $i<m$, $M, \bar{s}\models_c \exists x_i\phi$ $\iff$ there is a $\bar{t}\in {\sf C}^n(M)$, $\bar{t}\equiv_i \bar{s}$ such that
$M, \bar{t}\models \phi$.
$M$  is  {\it $m$--square}, if  $\bar{s}\in {\sf C}^n(M), a\in \A$, $i<n$,
and   $l:n\to m$ is an injective map, $M\models {\sf c}_ia(s_{l(0)},\ldots, s_{l(n-1)})$,
$\implies$ there is a $\bar{t}\in {\sf C}^n(M)$ with $\bar{t}\equiv _i \bar{s}$,
and $M\models a(t_{l(0)}, \ldots, t_{l(n-1)})$.

Finally, $M$ is said to be {\it $m$--flat} if  it is $m$--square and
for all $\phi\in \L(\A)^m$, for all $\bar{s}\in {\sf C}^n(M)$, for all distinct $i,j<m$,
$M\models_c [\exists x_i\exists x_j\phi\longleftrightarrow \exists x_j\exists x_i\phi](\bar{s}).$
\end{definition}

So if $M$ is an $m$--flat representation of $\A$, then in the constructed $m$--dilation $\B$ of $\A$ with top element 
${\sf C}^n(M)$ and operations induced by the clique guarded (flat) semantics,  
cylindrifiers 
commute, so $\B\in \CA_m$, and hence $\A\in \bold S\sf Nr_n\CA_m$.

Conversely, from an $m$--dilation $\B$ of the {\it canonical extension}
of $\A\in \CA_n$, one constructs an $m$--dimensional 
hyperbasis \cite[Definition 12.11]{HHbook} modified (in a straightforward manner) to the $\CA$ case.
This $m$--dimensional hyperbasis can be viewed as a saturated set of $m$--dimensional hypernetworks (mosaics) 
that can be glued together in a step--by--step
manner to build the required $m$--flat representation of $\A$. 
For the relation algebra case witness
\cite[Lemmata 13.33-34-35, Proposition 36]{HHbook}.

We summarize the above discussion in the following theorem. 
For a class $\K$ of having a Boolean reduct let $\K\cap \bf At$ be the class consisting 
of atomic algebas in $\K$. Now it is proved in \cite{mlq} that:
\begin{theorem} \label{flat} Let $\A\in \CA_n$. Then the following hold:
\begin{enumerate}
 \item $\A\in \bold S{\sf Nr}_n\CA_m\iff \A$ has 
an $m$--flat representation.
\item $\A\in \bold S_c{\sf Nr}_n(\CA_m\cap \bf At)\iff $ has 
a complete $m$--flat representation.
\end{enumerate}
\end{theorem}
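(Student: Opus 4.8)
The plan is to establish both biconditionals by making precise the two--way passage between $m$--dilations of $\A$ (algebras $\B\in \CA_m$ with $\A\subseteq \Nr_n\B$) and $m$--flat representations of $\A$, already sketched informally in the discussion preceding the theorem. In each item the forward implication, from a representation to a dilation, is the concrete computational direction, while the converse, manufacturing a representation out of a subalgebra of a neat reduct, is the genuinely hard one, requiring a hyperbasis and a step--by--step construction.

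For the implication ``representation $\Rightarrow$ dilation'' in item (1), given an $m$--flat representation $M$ of $\A$, I would take the relativized set algebra $\B$ of dimension $m$ whose top element is the $n$--Gaifman hypergraph ${\sf C}^n(M)=\{s\in {}^mM:\rng(s)\text{ is an }n\text{--clique}\}$ and whose operations are those induced by the clique guarded semantics $\models_c$ of Definition \ref{clique}. The point is that the two clauses defining $m$--flatness do exactly the work needed: $m$--squareness guarantees that each ${\sf c}_i$ behaves like a genuine cylindrifier on ${\sf C}^n(M)$, and the flatness clause $\exists x_i\exists x_j\phi\leftrightarrow \exists x_j\exists x_i\phi$ is precisely the commutativity axiom $(C_4)$, so that $\B\in\CA_m$. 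One then checks that the map sending $a\in\A$ to its $\models_c$--extension lands in $\Nr_n\B$ and is an embedding, giving $\A\in \bold S{\sf Nr}_n\CA_m$. For item (2) the same construction applies, but now a \emph{complete} $m$--flat representation forces $\B$ to be atomic, its atoms being the hyperedges labelled by atoms of $\A$, and forces the embedding $\A\hookrightarrow \Nr_n\B$ to preserve arbitrary suprema, whence $\A\in \bold S_c{\sf Nr}_n(\CA_m\cap \bf At)$.

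For the reverse implication in item (1), starting from $\A\subseteq \Nr_n\B$ with $\B\in\CA_m$, I would first pass to canonical extensions, using the standard commutation of canonical extensions with the neat reduct functor to obtain $\A^+\subseteq \Nr_n\B^+$ with $\B^+$ a complete and atomic $\CA_m$; thus $\B^+$ is a complete atomic $m$--dilation of $\A^+$. From the atom structure $\At\B^+$ I would then extract an $m$--dimensional hyperbasis, the $\CA$--analogue of \cite[Definition 12.11]{HHbook}, viewed as a saturated family of $m$--dimensional hypernetworks. The required $m$--flat representation of $\A$ is built by a step--by--step (game) argument in which one glues together hypernetworks from the hyperbasis, scheduling all demands so that the limit structure is $m$--square and satisfies the flatness commutativity condition; this is the $\CA$ transcription of \cite[Lemmata 13.33--35, Proposition 36]{HHbook}. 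For item (2) one dispenses with the canonical extension: since $\B$ is already atomic and $\A\subseteq_c\Nr_n\B$, the hyperbasis is read off directly from $\At\B$, and the complete embedding ensures that the patched representation meets every atom of $\A$, hence is a \emph{complete} $m$--flat representation.

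The main obstacle, I expect, is this hard direction: producing a hyperbasis that is \emph{saturated} enough, that is, closed under the amalgamation and patching moves that let hypernetworks be glued along shared faces, so that \pe\ has a winning strategy in the gluing game, and verifying that only finitely many spare dimensions ($m$ rather than $\omega$) still suffice for the labels to cohere into an $m$--square, $m$--flat limit. A subsidiary difficulty is to transfer the relation--algebra hyperbasis machinery of \cite[Chapter 13]{HHbook} faithfully to the cylindric signature, keeping track of diagonals and of the $n$--clique constraint, and in item (2) to match \emph{complete} embeddings on the algebraic side with \emph{complete} representations, meeting every atom, on the geometric side.
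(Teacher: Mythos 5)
Your proposal is correct and follows essentially the same route as the paper, which itself only sketches the argument and defers the details to \cite{mlq}: the forward directions via the $n$--Gaifman hypergraph ${\sf C}^n(M)$ with clique guarded semantics (flatness supplying commutativity of cylindrifiers so that the dilation lands in $\CA_m$), and the converse via an $m$--dimensional hyperbasis extracted from an $m$--dilation of the canonical extension, glued together step--by--step as in the relation algebra case \cite[Lemmata 13.33--35]{HHbook}. The completeness refinements you give for item (2) likewise match the paper's intended argument.
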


If an algebra $\A$ has an 
$m$--square representation,
then the algebra neatly embeds into another $m$--dimensional algebra $\B$.
The  '$m$-dilation' $\B$ is formed the same way as for $m$--flatness. In particular, the top element of $\B$ 
is ${\sf C}^n(M)$ where $M$ is the $m$--square representation and the operations like before are induced by the clique guarded semantics.
We have $\B\in {\sf Crs}_m$, but $\B$ is not necessarily a $\CA_m$ for  it may fail commutativity of cylindrifiers. 
This discrepancy in the formed  dilations blatantly manifests itself in
a very important property.
The `Church Rosser condition' of commutativity of cylindrifiers in the formed dilation in case of $m$--flatness when $m\geq n+3$,
makes this clique guarded fragment strongly undecidable.

Let us formulate the latter result and some related ones for three dimensions.  Assume that  $m\geq 6$. Then it is undecidable to tell
whether a finite algebra in  $\CA_3$ has an $m$--flat representation, and the variety
$\bold S{\sf Nr}_3\CA_m$ cannot be finitely axiomatizable
in $k$th order logic for any positive $k$.
This can be proved by lifting the analogous results for relation algebras
\cite[Theorem 18.13, Corollaries 18.14, 18.15, 18.16]{HHbook}. One uses the construction of Hodkinson in \cite{AUU}
which associates recursively to every atomic relation algebra $\sf R$, an atomic  $\A\in \CA_3$
such that ${\sf R}\subseteq \sf Ra\A$, 
the latter is the relation algebra reduct of $\A$, cf. \cite[Definition 5.3.7, Theorem 5.3.8]{HMT2}.
The idea for the second part on non--finite axiomatizability is that the existence of any such finite axiomatization in 
$k$th order logic for any positive $k$,  
gives a decision procedure for telling whether a finite algebra is 
in $\bold S{\sf Nr}_3\CA_m$
or not \cite{HHbook} which is impossible as just shown.

Furthermore, there are finite algebras that have infinite $m$--flat representations, but do not
have finite ones, equivalently 
they {\it do not have a finite $m$--dimensional hyperbasis.}
Roughly an $m$--dimensional hyperbasis consists of a `saturated set' of 
$m$--dimensional hyernetworks. An $m$--dimensional hypernetwork is an extension of an $m$--dimensional networks with 
labelled  hyperedges. These $m$--dimensional hypernetworks, in an $m$--dimensional hyperbasis, 
satisfy certain closure consistency conditions analagous to basic $m\times m$ matrices 
in an $m$--dimensional cylindric basis \cite[Definition 12.11]{HHbook2}.
To see why, assume for contradiction that every finite algebra in $\bold S{\sf Nr}_3\CA_m$ has a finite
$m$--dimensional hyperbasis. 
We claim that there is an algorithm that decides membership in $\bold S{\sf Nr}_3\CA_m$ for finite algebras which we know 
is impossible:
\begin{itemize}
\item Using a recursive axiomatization of $\bold S\sf Nr_3\CA_m$ (exists), recursively enumerate all isomorphism types of
finite $\CA_3$s that are not in $\bold S{\sf Nr}_3\CA_m.$

\item Recursively enumerate all finite algebras in $\bold S{\sf Nr}_3\CA_m$.
For each such algebra, enumerate all finite sets of $m$--dimensional hypernetworks over $\A$,
using $\N$ as hyperlabels, and check  to
see if it is a hyperbasis. When a hypebasis is located specify $\A$.
This recursively enumerates  all and only the finite algebras in $\bold S{\sf Nr}_3\CA_m$.
Since any finite $\CA_3$ is in exactly one of these enumerations, the process will decide
whether or not it is in  ${\bold S}{\sf Nr}_3\CA_m$ in a finite time.
\end{itemize}
We have shown that there are finite algebras that have infinite $m$--flat representations, but do not
have finite ones (this cannot happen with $m$--squareness).

We end this section with the following theorem:
\begin{theorem}\label{packed} Let $\A\in \CA_n$ and 
$M$ be an $m$--flat representation of $\A$.
Then  
$$M, s\models_c \phi  \iff\  M,s\models {\sf packed}(\phi),$$
for all $s\in {\sf C}^n(M)$ and every $\phi\in L(A)^n$, where $\sf packed(\phi$) denotes the translation of $\phi$ to the packed fragment \cite[Definion 19.3]{HHbook}.
\end{theorem}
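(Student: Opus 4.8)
The plan is to argue by induction on the structure of $\phi$, in direct analogy with the proof of Theorem \ref{adm} for global guarding. First I would recall what the translation ${\sf packed}$ does: it relativizes every quantifier occurring in $\phi$ to a \emph{packed guard}, namely a conjunction of instances $1(x_{j_0},\ldots,x_{j_{n-1}})$ of the top--element relation of $\A$ (which is one of the $n$--ary symbols of $L(A)^n$) that asserts that the tuple at hand lies in ${\sf C}^n(M)$, i.e. that its range is an $n$--clique. Since the packed fragment is a \emph{syntactic} fragment of first order logic, the right--hand side $M,s\models {\sf packed}(\phi)$ is ordinary Tarskian satisfaction, whereas the left--hand side $M,s\models_c\phi$ is the clique guarded satisfaction of Definition \ref{clique}. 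The whole point of the theorem is that the clique restriction hard--wired into $\models_c$ is exactly what the packed guards make explicit at the level of syntax.

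For the base case, let $\phi$ be atomic and $\bar{s}\in {\sf C}^n(M)$. By Definition \ref{clique}, $\models_c$ coincides with the classical interpretation on atomic formulas, and on a clique the guard attached to an atomic subformula is trivially satisfied, so ${\sf packed}(\phi)$ and $\phi$ agree; the equivalence then follows from $M$ being a representation of $\A$ (the atomic labels being read off directly). The Boolean connectives commute with ${\sf packed}$ and are interpreted classically by both $\models$ and $\models_c$, so the steps for $\neg$ and $\land$ are immediate from the induction hypothesis. The crux is the quantifier step $\phi=\exists x_i\psi$. Unfolding the clique guarded clause, $M,\bar{s}\models_c\exists x_i\psi$ holds iff there is $\bar{t}\in {\sf C}^n(M)$ with $\bar{t}\equiv_i\bar{s}$ and $M,\bar{t}\models_c\psi$. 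On the packed side, ${\sf packed}(\exists x_i\psi)=\exists x_i(\gamma\land {\sf packed}(\psi))$ where $\gamma$ is the clique guard, so by ordinary first order semantics $M,\bar{s}\models {\sf packed}(\exists x_i\psi)$ iff there is $\bar{t}\equiv_i\bar{s}$ with $M\models\gamma(\bar{t})$ and $M,\bar{t}\models {\sf packed}(\psi)$. Invoking the induction hypothesis $M,\bar{t}\models_c\psi\iff M,\bar{t}\models {\sf packed}(\psi)$, the two sides will match provided $M\models\gamma(\bar{t})$ holds exactly when $\bar{t}\in {\sf C}^n(M)$.

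The hard part, and really the only point requiring genuine care, is this last equivalence together with the verification that $\gamma$ is a \emph{legitimate} packed guard. That $M\models\gamma(\bar{t})\iff \bar{t}\in {\sf C}^n(M)$ is immediate once $\gamma$ is taken to be the conjunction $\bigwedge 1(x_{j_0},\ldots,x_{j_{n-1}})$ ranging over all $n$--tuples of indices $j_0,\ldots,j_{n-1}<m$, since $\rng(\bar{t})$ is an $n$--clique precisely when every such $n$--subtuple is labelled by the top element $1$, and $M$ being a representation reads these labels correctly. Packedness requires that each pair of variables occurring in $\psi$ co--occurs in some conjunct of $\gamma$; this holds because $2<n$ guarantees that any two indices $p,q<m$ lie together in some $n$--tuple of indices, furnishing a conjunct $1(\ldots)$ containing both $x_p$ and $x_q$. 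Hence the clique guard is indeed packed and the induction closes. I would finally remark that the $m$--flatness of $M$ enters only as the standing hypothesis ensuring that $M$ is a well--behaved ($m$--square) representation on which Definition \ref{clique} applies; the equivalence itself is driven entirely by the clique structure and the correctness of the labelling, exactly as in Theorem \ref{adm}.
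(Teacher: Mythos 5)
Your proof is correct, and it is essentially the argument the paper intends: the theorem is stated here without proof, with only a pointer to \cite[\S 19.2.3]{HHbook}, and your induction --- relativizing each quantifier to the packed guard $\bigwedge 1(x_{j_0},\ldots,x_{j_{n-1}})$, checking that this guard defines exactly ${\sf C}^n(M)$ and is legitimately packed because any two variable indices co-occur in some $n$--tuple when $n\geq 2$, and closing the quantifier step via the induction hypothesis at the witness $\bar{t}\in {\sf C}^n(M)$ --- is precisely the standard one. Your closing observation is also accurate: $m$--flatness is never invoked in the induction itself, so the equivalence holds for any relativized structure on which $\models_c$ is defined.
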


In the sense of the previous theorem, the {\it the clique guarded fragments}, which are the $n$--variable fragments of first order order with clique 
(locally) guarded semantics are an alternative formulation of {\it the $n$--variable 
packed fragments}   of first order logic \cite[\S 19.2.3]{HHbook}.

\section{Products; a construction orthogonal to guarding} 

Now  we present a construction  that can be seen as {\it orthogonal to global guarding}.
In the latter process {\it the states are altered} but the accessibility relations (corresponding to cylindrifiers and diagonal
elements) {\it are the same}. In the present construction, the states are kept as they are 
but the {\it accessibility relations along
the components are altered}.

Fix $2<n<\omega$. In this subsection, though 
we follow \cite{k} for terminlogy, the subsection is fairly 
self contained.

{\bf Combination of unimodal logics; fusions and products:} 
Many multimodal logics can be considered as a combination of unimodal logics.
So in a sense any result on multimodal logic sheds light on combining modal logics.
But dually, we can start with `the components' and form a modal logic
that somehow encompasses them or extends them; we seek a multimodal logic in which they
`embed'.

In such a process it is very natural to ask about {\it transfer results}, namely, these properties  of the components that transfer
to the combination, like axiomatizability (completeness),
and decidability that involves complexity of the satisfiability
problem. 

There are two versions depending on whether
the combination method is syntactic or semantical, namely, 
{\it fusion} and {\it products}, respectively.
In fusions the components do not interact which makes transfer results from the components to the fusion
easy to handle. In fact, the fusion of consistent modal logics is
a conservative extension
of the components, and the fusion of finitely many logics (this is well defined because the fusion operator is associative)
has the finite model property if each of its components does. Fusion also preserves decidability.
However, determining {\it degrees of complexity}
is quite intricate here. For example, it is not known {\it $\sf PSPACE$ or $\sf EXPTIME$ completeness} transfer under
formation  of fusions.

Product logics  are far more complex. They are designed  semantically.
A product logic is {\it the multimodal logic  of products of
Kripke complete frames}, so by definition it is also Kripke complete.
It is not hard to see that the fusion of logics is contained in their product.
But in products {\it the modalities interact}, and this very interaction obviously adds to its components.
In fusions such interactions
are simply non--existent.
In such a process negative properties persist. The reason basically is that
products reflect the interaction of modalities; which is to be blamed
for the negative result. If they miss on anything
then they miss only on the uni--dimensional aspects of modalities and
these do not really contribute to negative results.
Negative results are caused
by the interaction of modalities not by their uni--dimensional properties.

{\bf Transfer results:} For example, in a product of two uni-modal logics, a precarious Church Rosser condition
on modalities is created via $\Diamond_i\Box_jp=\Box_j\Diamond_ip$  ($i,j <n$).
Indeed, it is
known that the theory of two commuting confluence
closure operators is undecidable.
Nevertheless, {\it commuting closure operations} alone can be harmless like in the case of
many cylindric--like algebras of dimension $2$,
but the interaction of the two modalities
expressed by the confluence is potentially harmful.
The product logic of two countable time flows is not even recursively enumerable, furthermore the modal logic of
$(\mathbb{N}, <)$ is undecidable.

Compared to fusions, there are very few general transfer results for products,  in fact here {\it the exact opposite occurs}.
Nice properties do not transfer, rather the lack of transfer is the norm, particularly
concerning finite axiomatizability and decidability.
An interesting example here is ${\bold  K}^n$  for $n\geq 3$.
Obviously the components are finitely axiomatizable, and their modal logics are decidable.
However, such a logic viewed as a product of frames of the form $(U, R)$ where $R\subseteq U\times U$
is an arbitrary relation has the finite model property,
but it encodes the tiling problem and so it is undecidable. 

In fact, for there are three dimensional
formulas that are valid in all higher dimensional finite products, but can be falsified on an infinite
frame. Here validity in higher dimensions is meaningful, because if $n<m$ then the modal logic ${\bf K}^n$
embeds into ${\bf K}^m$.
Furthermore, it is undecidable to tell whether a finite frame
is a frame for this logic, and this gives
strong non--finite axiomatizability results, and obviously implies undecidability.
It is known \cite{k} that between ${\bf K}^n$ and 
${\bf S5}^n$ for $n\geq 3$ is quite complicated.
The class of modal algebras for ${\sf S5}^n$ is just the class of
diagonal free cylindric algebras of dimension
$n$. For an overview of such results, and more, the reader is referred to \cite{k}.
It is known \cite{k} that one can add diagonal constants
to $n$ products forming so called {\it $\delta$ products}.
The fact that all three dimensional modal logics are undecidable can be intuitively
and indeed best
explained by the undecidability of the product ${\sf S5}^3$ and its relation to
the undecidable fragment of
first order logic with $3$ variables represented algebraically by ${\sf CA}_3$ whose equational theory is undecidable; a result of Maddux 
\cite{HMT2}.

Square Tarskian semantics can be seen as a {\it limiting case of  (i) relativized semantics, (ii) locally guarded or clique guarded semantics,
(ii) products

(i) In the first case the limit is taken on a varying set of worlds (states) approaching the square
$^nU$.} 

(ii) Let $2<n<m$. In the second case, viewed semantically  $m$ is allowed to grow.  
Here $m$--flatness witnesses of cylindrifiers
are allowed more space. Syntactically more and more degrees of freedom or dimensions
are created, so that an algebra $\A\in \CA_n$ has an $m$--flat representation
$\iff$ $\A$ {\it neatly embeds} into an $m$-- dimensional algebra, which is a {\it trunacted} neat embedding theorem 
as indicated above.

(iii) In products, {\it the accessibility relations along the components are changed} approaching $\bf S5^n$
where all accessibility relations along the components are the universal one.

\section*{Part 2, Getting more technical:}

\section{When rainbows and neat embeddings meet}

Here we obtain results on classes of algebras having a neat embedding property using rainbow constructions. In the process, we 
generalize the seminal results on atom--canonicity and  complete 
representations proved in \cite{Hodkinson, HH}, respectively.

\subsection{Rainbow constructions}

We need  the notions of {\it atomic networks} and {\it atomic games} \cite{HHbook, HHbook2}:

Let $i<n$. For $n$--ary sequences $\bar{x}$ and $\bar{y}$ we write $\bar{x}\equiv_ i\bar{y}$
$\iff \bar{y}(j)=\bar{x}(j)$ for all $j\neq i$.
 
\begin{definition}\label{game} Fix finite $n>1$. 

\begin{enumarab}
\item An {\it $n$--dimensional atomic network} on an atomic algebra $\A\in \CA_n$  is a map $N: {}^n\Delta\to  \At\A$, where
$\Delta$ is a non--empty set of {\it nodes}, denoted by $\nodes(N)$, satisfying the following consistency conditions: 
\begin{itemize}
\item If $\bar{x}\in {}^n\nodes(N)$, and $i<j<n$, then $N(x)\leq {\sf d}_{ij}\iff x_i=x_j$.
\item If $\bar{x}, \bar{y}\in {}^n\nodes(N)$, $i<n$ and $\bar{x}\equiv_i \bar{y}$, then  $N(\bar{x})\leq {\sf c}_iN(\bar{y})$.
\end{itemize}
For $n$--dimensional atomic networks $M$ and $N$, 
we write $M\equiv_i N\iff M(\bar{y})=N(\bar{y})$ for all $\bar{y}\in {}^{n}(n\sim \{i\})$.

\item   Assume that $\A\in \CA_n$ is  atomic and that $m, k\leq \omega$. 
The {\it atomic game $G^m_k(\At\A)$, or simply $G^m_k$}, is the game played on atomic networks
of $\A$
using $m$ nodes and having $k$ rounds \cite[Definition 3.3.2]{HHbook2}, where
\pa\ is offered only one move, namely, {\it a cylindrifier move}: 
\begin{itemize}
\item Suppose that we are at round $t>0$. Then \pa\ picks a previously played network $N_t$ $(\nodes(N_t)\subseteq m$), 
$i<n,$ $a\in \At\A$, $x\in {}^n\nodes(N_t)$, such that $N_t(x)\leq {\sf c}_ia$. For her response, \pe\ has to deliver a network $M$
such that $\nodes(M)\subseteq m$,  $M\equiv _i N$, and there is $y\in {}^n\nodes(M)$
that satisfies $y\equiv _i x$, and $M(y)=a$.  
\end{itemize}
\item  We write $G_k(\At\A)$, or simply $G_k$, for $G_k^m(\At\A)$ if $m\geq \omega$.
The {\it atomic game $F^m(\At\A)$, or simply $F^m$}, is like $G^m_{\omega}(\At\A)$ except that
\pa\ has the advantage to reuse the available
$n$ nodes during the play.
\end{enumarab}
\end{definition}

One can show that for $2<n<m<\omega$, the game $F^m$ 
tests neat embeddability in the following sense:
\begin{lemma}\label{n}\cite{mlq} If $\A$ is atomic and 
$\A\in \bold S_c{\sf Nr}_n\CA_m$, then \pe\ has a \ws\ in $F^m(\At\A)$ 
In particular, if $\A\in {\sf Nr}_n\CA_{\omega}$, 
then \pe\ has a \ws\ in $F^{\omega}(\At\A)$ and $G_{\omega}(\At\A)$, and if 
$\A$ is finite and \pa\ has a \ws\ in $F^m(\At\A)$, then $\A\notin \bold S{\sf Nr}_n\CA_{m}$. 
\end{lemma}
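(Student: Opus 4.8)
The plan is to prove the three items in order, with essentially all of the work residing in item~1; items~2 and~3 will then follow from soft facts about complete embeddings and finite algebras. For item~1, I would first unpack the hypothesis $\A\in \bold S_c\Nr_n\CA_m$ into a fixed $\C\in \CA_m$ together with a \emph{complete} embedding of $\A$ into $\Nr_n\C$; identifying $\A$ with its image, I may assume $\A\subseteq_c\Nr_n\C$, so that every supremum existing in $\A$ is computed the same way in $\C$. The central device is a translation of networks into elements of $\C$. Given a network $N$ with $\nodes(N)\subseteq m$, for each $\bar{x}\in{}^n\nodes(N)$ I relocate the $n$--dimensional label $N(\bar{x})\in \Nr_n\C$ from the coordinates $0,\ldots,n-1$ to the coordinates $x_0,\ldots,x_{n-1}$ by composing the substitution operators $\sub{x_j}{j}$ of $\C$ (legitimate since each $x_j<m$), and I set $\hat{N}$ to be the product over all such $\bar{x}$ of these relocated labels. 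The invariant \pe\ will maintain throughout the play is the single inequality $\hat{N}\neq 0$ in $\C$; the two consistency conditions defining a network (the $\diag{i}{j}$--clause and the $\cyl{i}$--clause) are precisely what is needed to see that this product behaves coherently, via the cylindric identities governing how the relocating substitutions interact with $\diag{i}{j}$ and $\cyl{i}$.

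The only move to analyse is \pa's cylindrifier move: he presents a previously played network $N_t$, an index $i<n$, an atom $a\in\At\A$ and a tuple $x\in{}^n\nodes(N_t)$ with $N_t(x)\leq \cyl{i}a$. \pe\ chooses a node $z<m$ --- one not already in use whenever such a node is available (which is automatic in $G_\omega$ and $F^\omega$, where $m\geq\omega$), and in $F^m$ a node she is free to overwrite --- and returns the network $M$ obtained from $N_t$ by adjoining the tuple $y=x[i/z]$ with label $M(y)=a$, the remaining freshly created hyperedges being labelled as dictated by the game rules. The crux is to verify $\hat{M}\neq 0$. For this I would cylindrify $\hat{N_t}$ in $\C$ along the dimension $z$ and meet the result with the relocated copy of $a$ at the coordinates $y_0,\ldots,y_{n-1}$; using $N_t(x)\leq \cyl{i}a$ together with the cylindric--algebra identities relating the relocating substitutions to $\cyl{i}$, this meet is shown to dominate a nonzero element, hence is itself nonzero. \textbf{This nonzero--ness computation is the main obstacle} and the genuinely delicate step of the whole lemma. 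Atomicity of $\A$ is then used to ensure that the newly forced hyperedges can be labelled by \emph{atoms} of $\A$, rather than arbitrary elements, without destroying nonzero--ness; and this is exactly where the \emph{completeness} of the embedding $\A\subseteq_c\Nr_n\C$ is indispensable, since it guarantees that a nonzero $n$--dimensional projection in $\C$ still lies above some atom of $\A$.

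I expect item~2 to be immediate from item~1: if $\A\in\Nr_n\CA_\omega$ then, appealing to the standard fact that a neat reduct is a complete subalgebra of its dilation, $\A\in \bold S_c\Nr_n\CA_\omega$, so item~1 applied with $m=\omega$ yields a \ws\ for \pe\ in $F^\omega(\At\A)$ and in $G_\omega(\At\A)$, the reuse of nodes being vacuous once $m\geq\omega$. For item~3 I would use that a \emph{finite} algebra has only finite suprema, which are preserved by every embedding; hence on finite algebras one has $\bold S\Nr_n\CA_m=\bold S_c\Nr_n\CA_m$. Thus if $\A$ were in $\bold S\Nr_n\CA_m$ it would already be in $\bold S_c\Nr_n\CA_m$, and item~1 would hand \pe\ a \ws\ in $F^m(\At\A)$, contradicting the hypothesis that \pa\ has one (the game admits no draws, so both players cannot win). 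Therefore $\A\notin \bold S\Nr_n\CA_m$, as required; all steps outside the cylindrifier computation of item~1 are bookkeeping with the cylindric axioms and routine facts about complete and finite embeddings.
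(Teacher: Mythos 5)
Your proposal is correct, and for the main implication it follows the standard route that the paper itself delegates entirely to its references: fix a complete embedding $\A\subseteq_c{\sf Nr}_n\C$ with $\C\in\CA_m$, code each played network $N$ as an element $\hat{N}$ of $\C$ by relocating labels via the substitution operators, and have \pe\ preserve the invariant $\hat{N}\neq 0$, using $\sum^{\C}\At\A=1$ (which is exactly what completeness of the embedding extracts from atomicity of $\A$) to refine the newly forced labels to atoms. Where you genuinely diverge from the paper is in the last item: the paper argues that $\A\in\bold S{\sf Nr}_n\CA_m$ implies $\A^+\in\bold S_c{\sf Nr}_n\CA_m$ for the canonical extension $\A^+$, and that a finite algebra equals its canonical extension; you instead observe directly that an embedding of a finite algebra preserves all (necessarily finite) suprema and is therefore automatically complete, so $\bold S{\sf Nr}_n\CA_m$ and $\bold S_c{\sf Nr}_n\CA_m$ coincide on finite algebras. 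The two arguments land in the same place, but yours is more elementary, bypassing the preservation of the class under canonical extensions, while the paper's version isolates a fact that remains useful for infinite algebras. Two small points of hygiene: in the atom--refinement step what you actually need is that every nonzero element of $\C$ has nonzero \emph{meet} with some atom of $\A$, not that it lies above one; and the reason \pa\ and \pe\ cannot both have winning strategies is that playing the two strategies against each other yields a contradiction, rather than the mere absence of draws.
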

\begin{proof}\cite{r, mlq}. The last part follows by observing that for any $\C\in \CA_n$, if $\C\in \bold S{\sf Nr}_n\CA_m\implies \C^+\in \bold S_c{\sf Nr}_n\CA_m$
(where $\C^+$ is the canonical extension of $\C$) and if  $\C$ is finite,  then of course $\C=\C^+$.
\end{proof}
In the first item of theorem \ref{can}, we will use the following lemma:
\begin{lemma}\label{square} Let $2<n<m<\omega$. Let $\A\in \CA_n$ be finite. 
Then \pe\ has a \ws\ in  $G^m(\At\A)\iff \A$ has an $m$--square representation.
\end{lemma}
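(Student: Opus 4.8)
The plan is to prove both implications via the standard correspondence between atomic games and relativized representations, with finiteness of $\A$ used to keep the labelling and the bookkeeping clean. Throughout I read $G^m(\At\A)$ as the $\omega$--rounded game $G^m_\omega(\At\A)$.

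First I would dispatch the easy direction: an $m$--square representation $M$ of $\A$ yields a \ws\ for \pe. The idea is that \pe\ maintains, alongside each network $N_t$ she plays, an assignment $v_t\colon \nodes(N_t)\to M$ with $\nodes(N_t)\subseteq m$ such that for every $\bar x\in {}^n\nodes(N_t)$ the tuple $(v_t(x_0),\dots,v_t(x_{n-1}))$ lies in ${\sf C}^n(M)$ and satisfies the atom $N_t(\bar x)$ in $M$. Since $\A$ is finite, $1=\sum_{a\in\At\A}a$, and any representation sends this finite join to the whole top element with the images of distinct atoms disjoint; hence every clique tuple satisfies a \emph{unique} atom, so the network labels read off from $M$ are well defined. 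When \pa\ plays a cylindrifier move $(N_t,i,a,x)$ with $N_t(x)\leq {\sf c}_ia$, the tuple $\bar s=(v_t(x_0),\dots,v_t(x_{n-1}))$ satisfies ${\sf c}_ia$ in $M$; $m$--squareness (Definition \ref{clique}) supplies $\bar t\equiv_i\bar s$ in ${\sf C}^n(M)$ with $M\models a(\bar t)$. \pe\ uses the changed coordinate of $\bar t$ to define the witness node $y\equiv_i x$ (reusing or adding a node, which is possible as $m>n$), extends $v_t$, and labels the affected tuples by their unique atoms in $M$. The invariant is preserved, so \pe\ survives every round and wins.

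Second, and this is the substantial direction, I would build an $m$--square representation from a \ws\ $\sigma$ for \pe. Because $G^m$ caps the number of nodes at $m$, one \emph{cannot} take the union of a single play as the representation. Instead I would run a step--by--step (chase) construction. Let $\mathcal N$ be the family of all networks arising in plays in which \pe\ obeys $\sigma$. The governing fact is an \emph{$m$--saturation} property of $\mathcal N$: for every $N\in\mathcal N$, every clique $x$ of $N$, every $i<n$ and every atom $a$ with $N(x)\leq {\sf c}_ia$, there is $N'\in\mathcal N$ refining $N$ on a node set of size $\leq m$ and containing a witness $y\equiv_i x$ with $N'(y)=a$ --- exactly the content of \pe\ answering \pa's cylindrifier move by $\sigma$. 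I would then amalgamate the members of $\mathcal N$ into a single $L(A)^n$--structure $M$ by a disjoint--then--glue construction (the cylindric analogue of the basis--to--relativized--representation construction of \cite[Ch.\ 13]{HHbook}), driven by a fair enumeration of all potential squareness defects. Any $n$--clique of the growing structure embeds into some $N\in\mathcal N$, so it is consistently labelled and sits below a unique atom; by $m$--saturation every defect is eventually repaired. The limit $M=\bigcup_k M_k$ is then the desired $m$--square representation.

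The hard part will be the amalgamation in the second direction: organizing the disjoint union and the gluing so that (i) every clique of the model embeds into some network in $\mathcal N$, hence receives a consistent label below a unique atom, and (ii) every requirement ${\sf c}_ia$ at a clique is eventually met by a witness obtained from $\sigma$ \emph{without ever forcing more than $m$ nodes to cohere at once}. It is precisely this $m$--node ceiling that yields $m$--squareness rather than full squareness or $m$--flatness: since no play requires more than $m$ mutually related nodes, commutativity of cylindrifiers need not hold in $M$, in keeping with the strict gap between $m$--square and $m$--flat representations underlying theorems \ref{flat} and \ref{can}. Finiteness of $\A$ is used throughout to guarantee the unique atom--labels and to make the fairness bookkeeping over finitely many atoms and defect--types routine.
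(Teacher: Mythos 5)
The paper states Lemma \ref{square} with no proof at all, tacitly importing it from \cite{mlq} and the relation--algebra analogue in \cite[Ch.~13]{HHbook}; your argument is exactly the standard one used in those sources, and it is correct in outline. Both directions are as they should be: reading atom--labels off an $m$--square representation (unique because $\A$ is finite, so distinct atoms have disjoint images and their join is the unit) gives \pe\ a strategy, and conversely the family $\mathcal{N}$ of networks reachable under a winning strategy is an $m$--saturated basis from which a step--by--step amalgamation yields the representation. Your observation that one cannot take the limit of a single play and must instead glue all of $\mathcal{N}$ is the crux — a play of $G^m$ is a tree rather than a chain, since \pa\ may return to any previously played network — and the $m$--node ceiling is indeed what delivers squareness rather than flatness.
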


Now we recall `rainbow constructions' as introduced in algebraic logic by Hirsch and Hodkinson \cite{HHbook2}.
Fix $2<n<\omega$. {\it Coloured graphs} \cite{HH} are complete graphs whose edges are labelled by the rainbow colours, $\g$ (greens), $\r$ (reds), and 
$\w$ (whites) satisfying certain consistency conditions. The greens are 
$\{\g_i: 1\leq i< n-1\}\cup \{\g_0^i: i\in \sf G\}$  and the reds are $\{\r_{ij}: i,j \in \sf R\}$ where
$\sf G$ and $\sf R$ are two relational structures. The whites are $\w_i: i\leq n-2$.

In coloured graphs certain triangles are forbidden.
For example a green triangle (a triangle whose edges are all green)  is forbidden. Not all red triangles are allowed.  
In consistent (allowed) red triangle the indices 
`must match' satisfying a certain `consistency condition'. Also, in coloured graphs some $n-1$ tuples (hyperedges) are labelled by shades of yellow \cite{HH}. 

Given relational structures 
$\sf G$ and $\sf R$ the rainbow 
atom structure of dimension $n$ are equivalence classes of surjective maps $a:n\to \Delta$, where $\Delta$ is a coloured graph
in the rainbow signature, and the equivalence relation relates two such maps $\iff$  they essentially define the same graph \cite[4.3.4]{HH};
the nodes are possibly different but the graph structure is the same. We let $[a]$ denote the equivalence class containing $a$.
The accessibility binary relation corresponding 
to the $i$th  cylindrifier $(i<n)$ is defined by:  $[a] T_i [b]\iff a\upharpoonright n\sim \{i\}=b\upharpoonright n\sim \{i\},$ 
and the accessibility unary relation corresponding to the $ij$th diagonal element ($i<j<n$) 
is defined by: $[a]\in D_{ij}\iff a(i)=a(j)$. We refer to the atom $[a]$ ($a:n\to \Delta$) as an atom and sometimes as an $n$ --coloured graph.
We denote the complex algebra of the rainbow atom structure based on 
$\sf G$ and $\sf R$ by $\CA_{\sf G, R}$. The dimension of $\CA_{\sf G, R}$ will be clear 
from context. Identifying $[a]$ with $a$, we sometimes refer to an atom $[a]:n\to \Delta$ of $\CA_{\sf G, R}$ as 
an $n$--coloured graph. 

If $\A$ is an (atomic)  rainbow $\CA_n$, 
and $k, m\leq \omega$, then the games $G_k(\At\A)$ and $F^m(\At\A)$ 
translate to games on coloured graphs ($\sf CGR$) \cite[p.27--29]{HH}.
Certain special finite coloured graphs play an essential role in `rainbow games' whose board  consists of coloured graphs.
Such special coloured graphs are called {\it cones}:

{\it Let $i\in {\sf G}$, and let $M$ be a coloured graph consisting of $n$ nodes
$x_0,\ldots,  x_{n-2}, z$. We call $M$ {\it an $i$ - cone} if $M(x_0, z)=\g_0^i$
and for every $1\leq j\leq m-2$, $M(x_j, z)=\g_j$,
and no other edge of $M$
is coloured green.
$(x_0,\ldots, x_{n-2})$
is called  the {\bf base of the cone}, $z$ the {\bf apex of the cone}
and $i$ the {\bf tint of the cone}.}

The \ws\ of \pa\ in the rainbow game played on coloured graphs played between \pe\ and \pa\  
is bombarding \pe\ with $i$--cones, $i\in \sf G$, having the 
same base 
and distinct green tints.  To respect the rules of the game \pe\ has to choose a red label for appexes of two succesive cones.  
Eventually, running out of `suitable reds',  \pe\ is forced to play an inconsistent triple of reds where indices do not match.
Thus \pa\ wins on a red clique (a graph all of whose edges are lablled by a red).  
Such a \ws\ is dictated by a simple \ef\ forth  
game played on the relational structures $\sf G$ and $\sf R$ denoted by 
${\sf EF}_p^r(\sf G, R)$ in  \cite[Definition 16.2]{HHbook2} where $r$ is the number of rounds and $p$ is number of pairs of pebbles on board. 

\subsection{Atom--canonicity, first order definability and finite axiomatizability}

Unless otherwise indicated, in this section $n$ is fixed to be finite $n>2$. 
To formulate 
and prove the next theorem, we need to fix some notation.
We write $\bold S_c$ for the operation of forming  complete sublgebras and $\bold S_d$ for the operation of forming 
dense subalgbras. Let $\bold K$ be a class of $\sf BAO$s and 
$\A$, $\B\in \bold K$ such that $\A\subseteq \B$. Then $\A$ is a {\it complete subalgebra} of $\B$ if 
$\A\subseteq \B$ and for $X\subseteq \A$, $\sum ^{\A}X=1\implies \sum ^{\B}X=1$. 
(Recall that) $\A$ is {\it dense} in $\B$ if $(\forall b\in B\sim \{0\})(\exists a\in \A\sim \{0\})(a\leq b)$.
Observe that the two definitions depend only on the Boolean part of the algebras considered. Furthermore, it is not hard to show
that for any such $\bold K$, $\bold K\subseteq \bold S_d\bold K\subseteq \bold S_c\bold K$. In fact, 
the last two inclusions are proper when $\bold K$ is the class
of Boolean algebras (without operators).

Now we are ready to generalize the main results 
in \cite{Hodkinson, HH}. In the last reference it is shown that the class of completely representable $\CA_n$s, 
briefly ${\sf CRCA}_n$ when $2<n<\omega$ is not elementary, and in the former, it is shown that $\RCA_n$ 
is not atom--canonical.
Recall that Andr\'eka \cite{Andreka} 
splits atoms in a representable algebra to get a non--representable one. 
But one can split atoms to do the exact opposite. One can undergo {\it the reverse process} passing from a 
non--representable algebra to a representable one via so--called {\it blow up and 
blur constructions} \cite{ANT, mlq}, a task to be implemented next.
Recall that $n$ is fxed to be finite $>2$. Let $k\geq 3$.  For an infinite ordinal $\alpha$, we stipulate that a (complete) 
$\alpha$--flat representation is just a (complete)
representation.

\begin{theorem}\label{can} 
\begin{enumerate}
\item The variety $\bold S{\sf Nr}_n\CA_{n+k}$ is not atom--canonical.  Hence the variety of algebras having $n+k$--flat representations is not atom--canonical. 
In particular,
${\sf RCA}_n$ is not atom--canonical \cite{Hodkinson}. Furthermore, the variety of algebras having $n+k$--square representations is not atom--canoical.
\item For any class $\bold K$,
such that ${\sf CRCA}_n\cap {\bf S}_d{\sf Nr}_n\CA_{\omega}\subseteq \bold K\subseteq \bold S_c{\sf Nr}_n\CA_{n+k}$, 
$\bold K$ is not elementary. The class of algebras having complete $n+k$--flat representations is not elementary. 
Furthemore, any class $\bold L$ such that $\At({\sf Nr}_n\CA_{\omega})\subseteq \bold L\subseteq \bold \At(\bold S_c{\sf Nr}_n\CA_{n+3})$ 
is not elementary. Finally, ${\bf El}{\sf Nr}_n\CA_{\omega}\nsubseteq {\bf S}_d{\sf Nr}_n\CA_{\omega}\iff$ 
any class $\bold L$ such that  ${\sf Nr}_n\CA_{\omega}\subseteq \bold L\subseteq \bold S_c{\sf Nr}_n\CA_{n+3}$, $\bold L$ is not elementary.
\end{enumerate}
\end{theorem}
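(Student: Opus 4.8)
The plan is to derive every clause of Theorem~\ref{can} from one rainbow atom structure produced by a blow--up--and--blur on a finite bad algebra, reading off the conclusions from the two atomic games $F^m$ and $G^m$ of Definition~\ref{game}. First I would fix relational structures $\sf G, \sf R$ for which \pa\ has a \ws\ in the \ef--style rainbow game that forces the board of $n$--coloured graphs to collapse onto a red clique using only $n+3$ nodes; the associated \emph{finite} rainbow algebra $\mathcal R=\CA_{\sf G, R}$ then satisfies ``\pa\ wins $F^{n+3}(\At\mathcal R)$ and $G^{n+3}(\At\mathcal R)$'', so by Lemma~\ref{n} it lies outside $\bold S\Nr_n\CA_{n+3}$ and by Lemma~\ref{square} it has no $(n+3)$--square representation. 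I would then blow up $\mathcal R$: replace each atom by countably many copies and adjoin the Hirsch--Hodkinson blurs, obtaining a countable atom structure $\bf At$ whose term algebra $\Tm\bf At$ is simple, atomic and representable (the extra copies give \pe\ enough room to win the representation game on the atoms, so $\bf At$ is weakly representable), while $\mathcal R$ re--embeds as a subalgebra of the complex algebra $\Cm\bf At$ through the joins of the split atoms.

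For Part~(1) this already yields non--atom--canonicity. The algebra $\A:=\Tm\bf At\in\RCA_n\subseteq\bold S\Nr_n\CA_{n+k}$ is atomic, yet its \de\ completion $\Cm\At\A=\Cm\bf At$ contains the copy of $\mathcal R$; since $\bold S\Nr_n\CA_{n+3}$ is closed under subalgebras and $\mathcal R\notin\bold S\Nr_n\CA_{n+3}$, we get $\Cm\bf At\notin\bold S\Nr_n\CA_{n+3}\supseteq\bold S\Nr_n\CA_{n+k}$ for every $k\ge 3$. The two corollaries follow by the semantic dictionary: by Theorem~\ref{flat} membership in $\bold S\Nr_n\CA_{n+k}$ is equivalent to possessing an $(n+k)$--flat representation, so the flat version is immediate; for the square version I would observe that a genuine representation of $\A$ is $(n+k)$--square, whereas an $(n+k)$--square representation of $\Cm\bf At$ would restrict to one of its finite subalgebra $\mathcal R$, contradicting Lemma~\ref{square}.

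For Part~(2) I would tune the same rainbow atom structure so that \pe\ wins the $k$--round game $G_k(\At\A)$ for every finite $k$ while \pa\ still wins the unbounded game $F^{n+3}(\At\A)$; taking $\A=\Cm\bf At$, the first part of Lemma~\ref{n} gives $\A\notin\bold S_c\Nr_n\CA_{n+3}$, placing $\A$ above the upper bound of every class in question. Passing to a non--principal ultrapower $\A^{*}=\prod_U\A$, the sentences coding ``\pe\ wins $G_k$'' transfer by \Los, and $\omega_1$--saturation lets \pe\ amalgamate her finite strategies into a \ws\ in $G_\omega(\At\A^{*})$, producing a complete representation of $\A^{*}$ and so $\A^{*}\in{\sf CRCA}_n\cap\bold S_d\Nr_n\CA_\omega$. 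Since $\A\equiv\A^{*}$ while every $\bold K$ with ${\sf CRCA}_n\cap\bold S_d\Nr_n\CA_\omega\subseteq\bold K\subseteq\bold S_c\Nr_n\CA_{n+k}$ contains $\A^{*}$ but not $\A$, no such $\bold K$ is elementary, and the class of algebras with complete $(n+k)$--flat representations is covered by Theorem~\ref{flat}(2). The atom--structure version is unconditional because the games depend only on $\bf At$: every atomic algebra sharing $\At\A$ is excluded from $\bold S_c\Nr_n\CA_{n+3}$, whereas the complete representation makes $\At\A^{*}$ strongly representable, so its complex algebra is a neat reduct and $\At\A^{*}\in\At(\Nr_n\CA_\omega)$.

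Finally, the biconditional I would argue at the level of elementary closures, noting that it isolates exactly the gap between placing the \emph{atom structure} $\At\A^{*}$ and the \emph{algebra} $\A^{*}$ itself into $\Nr_n\CA_\omega$. For $(\Leftarrow)$, contrapositively, if $\bold{El}\,\Nr_n\CA_\omega\subseteq\bold S_d\Nr_n\CA_\omega$ then, using $\Nr_n\CA_\omega\subseteq\bold{El}\,\Nr_n\CA_\omega\subseteq\bold S_d\Nr_n\CA_\omega\subseteq\bold S_c\Nr_n\CA_{n+3}$, the class $\bold{El}\,\Nr_n\CA_\omega$ is itself a sandwiched class that is elementary, so not every $\bold L$ is non--elementary. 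For $(\Rightarrow)$, if some sandwiched $\bold L$ were elementary then $\bold{El}\,\Nr_n\CA_\omega\subseteq\bold L\subseteq\bold S_c\Nr_n\CA_{n+3}$, and combined with the structural containment $\bold{El}\,\Nr_n\CA_\omega\cap\bold S_c\Nr_n\CA_{n+3}\subseteq\bold S_d\Nr_n\CA_\omega$ this forces $\bold{El}\,\Nr_n\CA_\omega\subseteq\bold S_d\Nr_n\CA_\omega$, contradicting the hypothesis. I expect this last containment to be the main obstacle: it amounts to showing that an atomic algebra elementarily equivalent to a neat reduct $\Nr_n\CA_\omega$ and admitting an $(n+3)$--dilation must densely embed into a genuine neat reduct, i.e. that the complete representation extracted in the ultrapower can be pushed all the way into $\Nr_n\CA_\omega$ rather than merely into $\bold S_d\Nr_n\CA_\omega$, and pinning down this gap is where I would expect to spend most of the effort.
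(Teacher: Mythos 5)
Your item (1) is essentially the paper's argument: the finite rainbow algebra $\CA_{n+1,n}$ is placed outside $\bold S{\sf Nr}_n\CA_{n+3}$ via the \ef\ game together with Lemmas \ref{n} and \ref{square}, its red atoms are blown up and blurred, the term algebra of the resulting structure ${\bf At}$ is representable, and $\CA_{n+1,n}$ re--embeds into $\Cm{\bf At}$ through the joins of the copies; the flat and square corollaries are read off as you describe. No complaint there.

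Item (2) has a fatal gap. You propose to take $\A=\Cm{\bf At}$ (the same blown--up structure) and ``tune'' it so that \pe\ wins $G_k(\At\A)$ for every finite $k$. But ``\pe\ wins $G_k(\At\A)$ for all finite $k$'' is precisely the Lyndon conditions on $\At\A$, and an atomic algebra whose atom structure satisfies the Lyndon conditions lies in ${\sf LCA}_n\subseteq{\sf RCA}_n$ --- this is exactly the ultrapower--plus--elementary--chain argument you invoke, followed by closure of the variety ${\sf RCA}_n$ under elementary equivalence. Since item (1) has just placed $\Cm{\bf At}$ outside $\bold S{\sf Nr}_n\CA_{n+3}\supseteq{\sf RCA}_n$, no such tuning can exist: the blown--up atom structure necessarily fails some finite--round Lyndon condition. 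This is why the paper switches to a \emph{different} algebra for item (2), a rainbow--like $\C$ built on the ordered structures $\Z$ and $\N$ with the additional forbidden triples $(\g_0^i,\g_0^j,\r_{kl})$ whenever $\{(i,k),(j,l)\}$ is not an order preserving partial map. For that $\C$, \pe\ wins every bounded game $G_k$ (so $\C$ is representable and elementarily equivalent to a completely representable $\D$), while \pa\ wins the \emph{node--reusing, $\omega$--round} game $F^{n+3}$ by forcing an infinite descending sequence in $\N$ --- a strategy that genuinely needs infinitely many rounds and therefore does not conflict with \pe's wins in the $G_k$. The whole of item (2) lives in the gap between $G_k$ and $F^{n+3}$, which your proposal collapses. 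Two further shortfalls: obtaining the lower bound ${\sf CRCA}_n\cap\bold S_d{\sf Nr}_n\CA_\omega$ and the atom--structure clause requires the stronger game $H_k$ (giving $\Cm\At\D\in{\sf Nr}_n\CA_\omega$ and $\D\subseteq_d\Cm\At\D$), which mere $\omega_1$--saturation of an ultrapower does not yield; and the containment ${\bf El}\,{\sf Nr}_n\CA_\omega\cap\bold S_c{\sf Nr}_n\CA_{n+3}\subseteq\bold S_d{\sf Nr}_n\CA_\omega$ on which you rest the final biconditional is neither proved nor needed --- the paper instead observes that an elementary class sandwiched as stated would contain ${\bf El}\,{\sf Nr}_n\CA_\omega$ and hence force ${\bf El}\,{\sf Nr}_n\CA_\omega\subseteq\bold S_d{\sf Nr}_n\CA_\omega$ directly.
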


\begin{proof} 

First item:   
One starts with a finite algebra $\C\in \CA_n$ that is outside $\bold S{\sf Nr}_n\CA_{n+3}$. This algebra is the rainbow algebra
$\CA_{n+1, n}$.  To see why it is outside  $\bold S{\sf Nr}_n\CA_{n+3}$, 
consider the game  \ef\ forth game ${\sf EF}_r^p( n+1,n)$. In this game 
played on the complete irreflexive graphs $n+1$ and $n$, \pa\ has a \ws\ since $n+1$ is `longer'. Here $r$ is the number of rounds and $p$ is the number of pairs of pebbles
on board. Using (any) $p>n$ many pairs of pebbles avalable on the board \pa\ can win this game in $n+1$ many rounds.
In each round $0,1\ldots n$, \pe\ places a new pebble  on  a new element of $n+1$.
The edge relation in $n$ is irreflexive so to avoid losing
\pe\ must respond by placing the other  pebble of the pair on an unused element of $n$.
After $n$ rounds there will be no such element, so she loses in the next round.
It is not hard 
to show that the \ws\ of \pa\  in the private \ef\ game lifts to a \ws\ in the graph game  
$G_k^{n+3}(\At(\CA_{n+1, n}))$ \cite[pp.841]{HH} for some finite $k$.
Hence by lemma \ref{square}, $\CA_{n, n+1}$ has no $n+3$--square representation. But plainly \pa\ 
has a \ws\ in $F_k^{n+3}(\At(\CA_{n+1, n}))$ (same $k$) 
from which it follows using lemma \ref{n}  that $\CA_{n+1, n}\notin \bold S{\sf Nr}_n\CA_{n+3}$.
 
Then one {\it blows up and blur} $\CA_{n+1, n}$, by splitting
its `(red) atoms' each to infinitely many.
After this splitting one gets a new infinite atom structure $\bf At$. This atom structure 
is like the atom structure of the (set) algebra $\A$ constructed in \cite[Definition 4.1, \S 4.2]{Hodkinson} except 
that the number of greens having $0$ as a subscript involved (in constructing $\bf At$) is only $n+1$,  
rather than $\omega$--many, as  is the case in the construction of \cite{Hodkinson}.

So in the present context, after the splitting `the finitely many red colours' replacing each such red colour $\r_{kl}$, $k<l<n$  by $\omega$ many 
$\r_{kl}^i$, $i\in \omega$, the rainbow signature for the resulting rainbow theory as defined in \cite[Definition 3.6.9]{HHbook} call this theory $T_{ra}$,
 consists of $\g_i: 1\leq i<n-1$, $\g_0^i: 1\leq i\leq n+1$,
$\w_i: i<n-1$,  $\r_{kl}^t: k<l< n$, $t\in \omega$,
binary relations, and $n-1$ ary relations $\y_S$, $S\subseteq_{\omega} n+k-2$ or $S=n+1$. 

The representation of $\Tm\bf At$  can be built exactly like in \cite{Hodkinson}. The representing set algebra of dimension $n$ has
base an $n$--homogeneous  model $M$ of another theory $T$ whose signature expands that of 
$T_{ra}$ by an additional binary relation (a shade of red) $\rho$.  
In this new signature $T$ is obtained from $T_{ra}$ by 
some axioms  (consistency conditions) extending $T_{ra}$. Such axioms (consistency conditions) 
specfify consistent triples involving $\rho$.
This model $M$ is constructed as a countable limit of finite models of $T$ 
using a game played between \pe\ and \pa.  We call the models of $T$ extended coloured graphs. 
In particular, $M$ is an extended coloured graph. Here, unlike the extended $L_{\omega_1, \omega}$ theory 
dealt with in \cite{Hodkinson},  $T$ is a {\it first order one}
because the number of greens used are finite.

As is the case with `rainbow games' \cite{HH, HHbook} 
\pa\ challenges \pe\  with  {\it cones} having  green {\it tints $(\g_0^i)$}, 
and \pe\ wins if she can respond to such moves. This is the only way that \pa\ can force a win.  \pe\ 
has to respond by labelling {\it appexes} of two succesive cones, having the {\it same base} played by \pa.
By the rules of the game, she has to use a red label. The \ws\ is implemented by \pe\  using the red label $\rho$ (outside the rainbow signature) 
that comes to her rescue  whenever she runs out of `rainbow reds', so she can respond with an extended coloured graphs. 
It turns out  inevitable, that some edges in $M$ 
are  labelled by $\rho$ during the play; in fact these edges labelled by $\rho$ will form an {\it infinite red clique}
(an infinite complete extended graph whose edges are all labelled by $\rho$.)

Now $\CA_{n, n+1}$ {\it embeds into} $\Cm\bf At$ by mapping 
every $n$--coloured graph to the join of its copies.
Let us describe the embedding more rigorously. 
Let ${\sf CRG}_f$ denote  the class of coloured graphs on 
${\At}\CA_{n, n+1}$ and $\sf CRG$ be the class of coloured graph on 
${\bf At}={\bf At}\A$. For the sake of brevity denote ${\At}\CA_{n, n+1}$ by $\bf At_f$. 
We 
can assume that  ${\sf CRG}_f\subseteq \sf CRG$.
Write $M_a$ for the atom that is the (equivalence class of the) surjection $a:n\to M$, $M\in \sf CGR$.
Here we identify $a$ with $[a]$; no harm will ensue.
We define the (equivalence) relation $\sim$ on $\bf At$ by
$M_b\sim N_a$, $(M, N\in {\sf CGR}):$
\begin{itemize}
\item $a(i)=a(j)\Longleftrightarrow b(i)=b(j),$

\item $M_a(a(i), a(j))=\r^l\iff N_b(b(i), b(j))=\r^k,  \text { for some $l,k$}\in \omega,$

\item $M_a(a(i), a(j))=N_b(b(i), b(j))$, if they are not red,

\item $M_a(a(k_0),\dots, a(k_{n-2}))=N_b(b(k_0),\ldots, b(k_{n-2}))$, whenever
defined.
\end{itemize}
We say that $M_a$ is a {\it copy of $N_b$} if $M_a\sim N_b$. 
We say that $M_a$ is a {\it red atom} if it has at least one edge labelled by a red rainbow colour $\r_{ij}^l$ for some $i<j<n$ and $l\in \omega$. 
Clearly every red atom $M_a$ has infinitely countable many red copies, which we denote by $\{M_a^{(j)}: j\in \omega\}$.
Now we define a map $\Theta: \CA_{n+`, n}=\Cm{\bf At_f}$ to $\Cm\bf At$,
by  specifing  first its values on ${\bf At}_f$,
via $M_a\mapsto \sum_jM_a^{(j)}$; each atom maps to the suprema of its 
copies.  

If $M_a$ is not red,   then by $\sum_jM_a^{(j)}$,  we understand $M_a$.
This map is extended to $\CA_{n+k-2, n}$ by $\Theta(x)=\bigcup\{ \Theta(y):y\in \At\CA_{n+k-2, n}, y\leq x\}$. The map
$\Theta$ is well--defined, because $\Cm\bf At$ is complete. 
It is not hard to show that the map $\Theta$ 
is an injective homomorphim. 
(Injectivity follows from the fact that $M_a\leq f(M_a)$, hence $\Theta(x)\neq 0$ 
for every atom $x\in \At(\CA_{n+1, n})$.)
These precarious joins prohibiting membership in $\bold S{\sf Nr}_n\CA_{n+3}$ 
{\it do not} exist in the term algebra $\Tm\bf At$, the subalgebra
of $\Cm\bf At$ generated by the atoms, only 
joins of finite or cofinite subsets of the atoms do.

We have shown that $\Tm {\bf At}\in {\sf RCA}_n$  
but its \de\ completion $\Cm\bf At$ is outside $\bold S{\sf Nr}_n\CA_{n+3}$
and $\Cm\bf At$ does not have an $n+3$--square representation proving the required.

We give another approach to proving the above result on non atom--canonicity.
Start  by the rainbow atom structure $\bf At$ (of the atomic set algebra $\A$), 
whose atoms are surjections from $n$ to coloured graphs (briefly $n$--coloured graphs). 
labelled by the rainbow colours (corresponding to the rainbow signature), greens, rainbow reds, whites..etc specified above.
Weak representability of $\bf At$ can be proved without the need of the additional red label $\rho$.  The term algebra 
$\Tm \bf At$ {\it can be represented intrinsically}, using the set of ultrafilters  $\omega\cup \{\sf RUf\}$ as `colours', 
where $\omega$ `codes' the countably many {\it principal ultrafilters} of $\Tm\bf At$; each such ultrafilter
is generated by an atom, namely, an $n$--coloured graph,  and $\sf RUf$ is the ` Red Ultrafilter' of $\Tm\bf At$ containing the filter 
generated by the co--finite sets  of $n$--coloured red graphs.

Here {\sf RUf} and $\rho$ are {\it literally} two sides of the same coin.   In fact,  $\rho$ (and $\sf RUf$) will have to be used 
infinitely many times during the play so 
that the base of any {\it potential representation} of $\Cm\bf At$ will have an infinite red clique  ${\sf RC}=(a_n: n\in \omega$), say, whose edges are labelled by $\rho$.    
It is precisely, this $\sf RC$  that prohibited a representation
of $\C=\Cm\bf At$ as illustrated in some detail on \cite[pp. 9--10]{Hodkinson}. 
though $\sf RC$ exists in the representation 
of $\Tm\bf At$, since $\Tm\bf At$ is not complete, 
$\sf RC$ will not cause any trouble as far as the representability of 
$\Tm \bf At$ is concerned.

(2) For the second item.  
Fix finite $n>2$. One takes a rainbow --like algebra based on the ordered structure $\Z$ and $\N$, that is similar but not identical to $\CA_{\Z, \N}$; call this (complex) algebra $\C$.
The reds ${\sf R}$ is the set $\{\r_{ij}: i<j<\omega(=\N)\}$ and the green colours used 
constitute the set $\{\g_i:1\leq i <n-1\}\cup \{\g_0^i: i\in \Z\}$. 
In complete coloured graphs the forbidden triples are like 
in usual rainbow constructions; 
more specifically the following are forbidden triangles in coloured graphs \cite[4.3.3]{HH}:
\vspace{-.2in}
\begin{eqnarray}
&&\nonumber\\
(\g, \g^{'}, \g^{*}), (\g_i, \g_{i}, \w_i),
&&\mbox{any }1\leq i\leq  n-2  \\
(\g^j_0, \g^k_0, \w_0)&&\mbox{ any } j, k\in \sf G\\
\label{forb:match}(\r_{ij}, \r_{j'k'}, \r_{i^*k^*})&&\mbox{unless }i=i^*,\; j=j'\mbox{ and }k'=k^*,
\end{eqnarray}
but now 
the triple  $(\g^i_0, \g^j_0, \r_{kl})$ is also forbidden if $\{(i, k), (j, l)\}$ is not an order preserving partial function from
$\Z\to\N$.
It can be proved that \pe\ has a \ws\ $\rho_k$ in the $k$--rounded game $G_k(\At\C)$ for all $k\in \omega$ \cite{mlq}.
Hence, using ultrapowers and an elementary chain argument  \cite[Corollary 3.3.5]{HHbook2}, one gets a countable 
(completely represenatble) algebra $\B$
 such that $\B\equiv \A$, and  \pe\ has a \ws\ in $G_{\omega}(\At\B)$. 

On the other hand, one can show that \pa\ has a \ws\ in $F^{n+3}(\At\C)$.
The idea here, is that, as is the case with \ws's of \pa\ in rainbow constructions, 
\pa\ bombards \pe\ with cones having distinct green tints demanding a red label from \pe\ to appexes of succesive cones.
The number of nodes are limited but \pa\ has the option to re-use them, so this process will not end after finitely many rounds.
The added order preserving condition relating two greens and a red, forces \pe\ to choose red labels, one of whose indices form a decreasing 
sequence in $\N$.  In $\omega$ many rounds \pa\ 
forces a win, 
so by lemma \ref{n} $\C\notin \bold S_c{\sf Nr}_n\CA_{n+3}$.

He plays as follows \cite{mlq}: In the initial round \pa\ plays a graph $M$ with nodes $0,1,\ldots, n-1$ such that $M(i,j)=\w_0$
for $i<j<n-1$
and $M(i, n-1)=\g_i$
$(i=1, \ldots, n-2)$, $M(0, n-1)=\g_0^0$ and $M(0,1,\ldots, n-2)=\y_{\Z}$. This is a $0$ cone.
In the following move \pa\ chooses the base  of the cone $(0,\ldots, n-2)$ and demands a node $n$
with $M_2(i,n)=\g_i$ $(i=1,\ldots, n-2)$, and $M_2(0,n)=\g_0^{-1}.$
\pe\ must choose a label for the edge $(n+1,n)$ of $M_2$. It must be a red atom $r_{mk}$, $m, k\in \N$. Since $-1<0$, then by the `order preserving' condition
we have $m<k$.
In the next move \pa\ plays the face $(0, \ldots, n-2)$ and demands a node $n+1$, with $M_3(i,n)=\g_i$ $(i=1,\ldots, n-2)$,
such that  $M_3(0, n+2)=\g_0^{-2}$.
Then $M_3(n+1,n)$ and $M_3(n+1, n-1)$ both being red, the indices must match.
$M_3(n+1,n)=r_{lk}$ and $M_3(n+1, r-1)=r_{km}$ with $l<m\in \N$.
In the next round \pa\ plays $(0,1,\ldots n-2)$ and re-uses the node $2$ such that $M_4(0,2)=\g_0^{-3}$.
This time we have $M_4(n,n-1)=\r_{jl}$ for some $j<l<m\in \N$.
Continuing in this manner leads to a decreasing
sequence in $\N$.

But we can go further. We define another $k$--rounded atomic game stronger than $G_k$ call it $H_k$, for $k\leq \omega$,  
so that if $\D\in \CA_n$ is countable and atomic and \pe\ has a \ws\ in $H_{\omega}(\At\D)$, 
then 
(*) $\At\D\in \At{\sf Nr}_n\CA_{\omega}$ and $\Cm\At\D\in {\sf Nr}_n\CA_{\omega}$ (these two conditions taken together do not imply that $\D\in {\sf Nr}_n\CA_{\omega}$, witness example \ref{SL2}).  

It can be shown that \pe\ has a \ws\ in $H_k(\At\C)$ for all 
$k\in \omega$, hence using ultrapowers and an elementary chain argument, we get that $\C\equiv \D$, 
for some countable completely representable $\D$ that satisfies the two conditions in (*).
Since $\D\subseteq_d \Cm\At\D$, we get the required result, because $\D\in \bold S_d{\sf Nr}_n\CA_{\omega}$
and as before $\C\notin \bold S_c{\sf Nr}_n\CA_{n+3}$ and $\C\equiv \D$. 
For the following part, let $\bold L$ be as specified and $\D$ and $\C=\CA_{\Z, \N}$ be the algebras in the last paragraph. Since an 
atom structure of an algebra is first order interpretable in the algebra, then we have $\D\equiv \C\implies \At\D\equiv \At\C$.
Furthermore $\At\D\in \At({\sf Nr}_n\CA_{\omega})\subseteq \bold L$ (as stated above 
$\D$ might not be in ${\sf Nr}_n\CA_{\omega}$) 
and $\At\C\notin  \At(\bold S_c{\sf Nr}_n\CA_{n+3})\supseteq \bold L$. 
The latter follows from the fact that
if $\F\in \CA_n$ is atomic, 
then $\At\F\in   \At(\bold S_c{\sf Nr}_n\CA_{n+3})\iff \F\in \bold S_c{\sf Nr}_n\CA_{n+3}$. 
We conclude that $\bold L$ is not elementary.

For the last part, It suffices to consider classes between ${\sf Nr}_n\CA_{\omega}$ and $\bold S_d{\sf Nr}_n\CA_{\omega}$, because the former class is not 
elementary \cite[Theorem 5.4.1]{Sayedneat} and as just shown any class between $\bold S_d{\sf Nr}_n\CA_{\omega}$ 
and ${\bold S}_c{\sf Nr}_n\CA_{n+3}$ is not elementary. 
One implication, namely, $\Longleftarrow$  is trivial. For the other less trivial (but still easy) implication, 
assume for contradiction that there is such a class $\bold K$ that is elementary.
Then ${\bf El}{\sf Nr}_n\CA_{\omega}\subseteq \bold K$, because $\bold K$ is elementary.
It readily follows that  ${\sf Nr}_n\CA_{\omega}\subseteq {\bf El}{\sf Nr}_n\CA_{\omega}\subseteq \bold K\subseteq \bold S_d{\sf Nr}_n{\sf CA}_{\omega}$,
which is impossible by the 
given assumption that ${\bf El}{\sf Nr}_n\CA_{\omega}\subsetneq \bold S_d{\sf Nr}_n{\sf CA}_{\omega}$.
\end{proof}
From a result of Venema's \cite[Theorem 2.96]{HHbook} by noting that varieties of $\CA_n$s are conjugated, 
we readily conclude from the first item that $\bold S{\sf Nr}_n\CA_{n+k}$
cannot be 
axiomatized by Sahlqvist equtions.

The next theorem taken from \cite{SL} highlights the limitations of the scope of the prevous proof on first order definability. 
These limitations for the relation algebra analogue were crossed in \cite{r}, an error 
that  was corrected in \cite{r2} 
by weakening the result in \cite{r}. 
\begin{theorem}\label{SL} Let $\alpha$ be any ordinal $>1$.
Then for every infinite cardinal $\kappa\geq |\alpha|$, there exist completely representable algebras
$\B, \A\in \CA_{\alpha}$, that are weak set algebras, such that $\At\A=\At\B$, $|\At\B|=|\B|=\kappa$,
$\B\notin {\bf El}{\sf Nr}_{\alpha}\CA_{\alpha+1}$,
$\A\in {\sf Nr}_{\alpha}\CA_{\alpha+\omega}$,  and $\Cm\At\B=\A$,
so that $|\A|=2^{\kappa}$. In particular,
${\sf Nr}_{\alpha}\CA_{\beta}\subsetneq \bold S_d{\sf Nr}_{\alpha}\CA_{\beta}\subseteq \bold S_c{\sf Nr}_{\alpha}\CA_{\beta}$. 
\end{theorem}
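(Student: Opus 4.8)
The plan is to realise both algebras as the term algebra and the complex algebra of one and the same $\alpha$--dimensional atom structure $\bf At$ with $\kappa$ atoms, so that $\A=\Cm\bf At$ is the \d\ completion of $\B=\Tm\bf At$ and the requirements $\At\A=\At\B=\bf At$ and $\Cm\At\B=\A$ hold by construction. First I would build $\bf At$ by lifting a suitable finite (or countable) base atom structure to dimension $\alpha$ using the same ultrafilter/reindexing machinery ($\rho_\Gamma:|\Gamma|\to\Gamma$ and an ultrafilter on the finite subsets of $\alpha$) deployed in the proof of theorem \ref{negative}, and padding out to exactly $\kappa$ atoms by an appropriate disjoint sum so that $|\bf At|=\kappa$. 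Then the universe of $\B=\Tm\bf At$ is generated from the $\kappa$ atoms by the Boolean operations together with the $|\alpha|\le\kappa$ cylindrifiers and diagonals, whence $|\B|=|\At\B|=\kappa$, while $\A=\Cm\bf At$ has universe $\wp(\bf At)$, so $|\A|=2^{\kappa}$. I would take the base to be completely (hence strongly) representable, so that $\A$ is a completely representable weak set algebra; since $\B\subseteq\A$ is atomic with $\At\B=\At\A$, restricting a complete representation of $\A$ yields a complete representation of $\B$, and both are weak set algebras.

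The second ingredient is $\A=\Cm\bf At\in{\sf Nr}_\alpha\CA_{\alpha+\omega}$. For this I would exhibit an explicit dilation: extend $\bf At$ to an $(\alpha+\omega)$--dimensional atom structure $\bf At^+$, meeting the consistency conditions among the extra cylindrifiers and diagonals, set $\D=\Cm\bf At^+\in\CA_{\alpha+\omega}$, and verify $\Nr_\alpha\D=\A$. The inclusion $\A\subseteq\Nr_\alpha\D$ is routine; the reverse inclusion --- that every element of $\D$ with dimension set contained in $\alpha$ already lies in $\Cm\bf At$ --- is where $\bf At^+$ must be designed with care, and is the standard ``enough spare dimensions'' computation.

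The heart of the argument, and the main obstacle, is $\B\notin{\bf El}\,{\sf Nr}_\alpha\CA_{\alpha+1}$. Since ${\bf El}$ preserves first--order sentences, it suffices to produce one first--order sentence $\sigma$ in the signature of $\CA_\alpha$ holding throughout ${\sf Nr}_\alpha\CA_{\alpha+1}$ but failing in $\B$. I would take $\sigma$ to assert that a quantifier--free--definable set of atoms has a least upper bound: with $Q(x)$ the quantifier--free formula selecting the atoms to be split, put
$$\sigma\;=\;\exists y\Big[\forall x\,(Q(x)\to x\le y)\ \wedge\ \forall z\,\big(\forall x\,(Q(x)\to x\le z)\to y\le z\big)\Big],$$
which is genuinely first--order although it speaks about an infinite supremum. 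In $\A=\Cm\bf At$ this supremum exists trivially by completeness. The two things to prove are: (i) the $Q$--atoms are infinitely many and their join is one of the ``forbidden'' infinite joins absent from the term algebra (exactly the blow--up phenomenon of theorem \ref{can}: $\Tm\bf At$ carries only finite and cofinite joins of split atoms, and being closed under removing single atoms it admits no \emph{least} upper bound for them), so $\B\not\models\sigma$; and (ii) in any $\A'={\sf Nr}_\alpha\D'$ with $\D'\in\CA_{\alpha+1}$ the corresponding supremum is forced to exist, because the single extra dimension supplies an element $w\in\D'$ whose cylindrification ${\sf c}_{\alpha}w$ lies in $\Nr_\alpha\D'$ and equals the join of the relevant atoms. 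Establishing (ii) --- calibrating $Q$ against the design of $\bf At$ so that neat--reducthood with merely one spare dimension already manufactures the witness $y$ --- is the delicate point, and it is here that the construction earns the strong conclusion ``not even elementarily equivalent to a neat reduct.''

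Finally the displayed consequence falls out cheaply. Since $\A\in{\sf Nr}_\alpha\CA_{\alpha+\omega}\subseteq{\sf Nr}_\alpha\CA_{\beta}$ and $\B$ is dense in $\A$, we get $\B\in\bold S_d{\sf Nr}_\alpha\CA_{\beta}$; but ${\sf Nr}_\alpha\CA_{\beta}\subseteq{\sf Nr}_\alpha\CA_{\alpha+1}\subseteq{\bf El}\,{\sf Nr}_\alpha\CA_{\alpha+1}$ for every $\beta\ge\alpha+1$, so $\B\notin{\bf El}\,{\sf Nr}_\alpha\CA_{\alpha+1}$ gives $\B\notin{\sf Nr}_\alpha\CA_{\beta}$. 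Combined with the general inclusions ${\sf Nr}_\alpha\CA_{\beta}\subseteq\bold S_d{\sf Nr}_\alpha\CA_{\beta}\subseteq\bold S_c{\sf Nr}_\alpha\CA_{\beta}$ recorded before theorem \ref{can}, this yields the strict inclusion ${\sf Nr}_\alpha\CA_{\beta}\subsetneq\bold S_d{\sf Nr}_\alpha\CA_{\beta}\subseteq\bold S_c{\sf Nr}_\alpha\CA_{\beta}$.
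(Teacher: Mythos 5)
Your proposal does not follow the paper's route, and more importantly it leaves the crux of the theorem unproved. The paper's proof is a short, completely concrete construction: take a field $\F$ of characteristic $0$ with $|\F|=\kappa$, let $V=\{s\in {}^{\alpha}\F: |\{i:s_i\neq 0\}|<\omega\}$ be the weak space, put $\A=\wp(V)$ (the full weak set algebra, which is automatically completely representable and lies in ${\sf Nr}_{\alpha}\CA_{\alpha+\omega}$), and let $\B=\Sg^{\A}\{y, y_s: s\in y\}$ where $y=\{s\in V: s_0+1=\sum_{i>0}s_i\}$. The atoms of both algebras are the singletons, so $\Cm\At\B=\A$ and the cardinality claims are immediate; the statement $\B\notin {\bf El}\,{\sf Nr}_{\alpha}\CA_{\alpha+1}$ is then extracted from the arithmetic of $\F$ encoded in $y$ (and is quoted from the cited source rather than re-proved). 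Nothing in this argument involves term algebras of abstract atom structures, ultrafilter liftings, or blow-up-and-blur joins.

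The genuine gap in your version is step (ii): you must show that your sentence $\sigma$ (asserting that the $Q$-definable set of atoms has a \emph{least} upper bound) holds in every member of ${\sf Nr}_{\alpha}\CA_{\alpha+1}$, and you explicitly defer this, calling it ``the delicate point.'' But that point \emph{is} the theorem. A neat reduct with one spare dimension is not in general closed under suprema of definable sets of atoms, so there is no generic reason the extra cylindrifier ${\sf c}_{\alpha}$ manufactures the witness $w$ you postulate; whether it does depends entirely on how $Q$ and the atom structure $\bf At$ are calibrated against each other, and you have specified neither (the base structure is ``suitable,'' the padding is ``appropriate,'' the dilation $\bf At^+$ is ``designed with care''). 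Until a concrete $\bf At$ and $Q$ are exhibited for which (i) the supremum genuinely fails to exist in $\Tm\bf At$ while $\Cm\bf At$ remains completely representable and a neat reduct, and (ii) the one extra dimension provably forces the supremum in any elementarily equivalent neat reduct, the argument is a plan rather than a proof. The paper's field construction is precisely such a calibration carried out in full, which is why it succeeds where the abstract template stalls. Your final paragraph deducing ${\sf Nr}_{\alpha}\CA_{\beta}\subsetneq \bold S_d{\sf Nr}_{\alpha}\CA_{\beta}$ from the main claims is correct and matches the paper's intent, but it rests on the unestablished step.
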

\begin{proof} 
Fix an infinite cardinal $\kappa\geq |\alpha|$.  Let $FT_{\alpha}$ denote the set of all finite transformations on an ordinal $\alpha$.
Assume that $\alpha>1$.  Let $\F$ be field of characteristic $0$ such that $|\F|=\kappa$,
$V=\{s\in {}^{\alpha}\F: |\{i\in \alpha: s_i\neq 0\}|<\omega\}$ and let
${\A}$ have universe $\wp(V)$ with the usual concrete operations.
Then clearly $\wp(V)\in {\sf Nr}_{\alpha}\sf CA_{\alpha+\omega}$.
Let $y$ denote the following $\alpha$--ary relation:
$y=\{s\in V: s_0+1=\sum_{i>0} s_i\}.$
Let $y_s$ be the singleton containing $s$, i.e. $y_s=\{s\}.$
Let ${\B}=\Sg^{\A}\{y,y_s:s\in y\}.$ Clearly $|\B|=\kappa$.
Now $\B$ and $\A$ having same top element $V$, share the same atom structure, namely, the singletons.
Thus $\Cm\At\B=\A$. 
As  proved in \cite{SL}, we have
$\B\notin {\bold  El}{\sf Nr}_{\alpha}{\sf CA}_{\alpha+1}$.  
\end{proof}

\begin{corollary}\label{SL2} Let $\alpha$ be any ordinal $>1$.  Then there exists  a completely representable $\B\in \CA_{\alpha}$ 
such that $\At\B\in \At{\sf Nr}_{\alpha}\CA_{\alpha+\omega}$, $\Cm\At\B\in {\sf Nr}_{\alpha}\CA_{\alpha+\omega}$, 
but $\B\notin {\sf Nr}_{\alpha}\CA_{\alpha+1}(\supseteq {\sf Nr}_{\alpha}\CA_{\alpha+\omega})$.
\end{corollary}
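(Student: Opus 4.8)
The plan is to read the corollary off directly from Theorem~\ref{SL}; no fresh construction is needed, since the witnessing algebra has already been built there. First I would fix an infinite cardinal $\kappa\geq |\alpha|$ and invoke Theorem~\ref{SL} to produce the two completely representable weak set algebras $\A,\B\in\CA_{\alpha}$ that share the same atom structure, with $\A\in {\sf Nr}_{\alpha}\CA_{\alpha+\omega}$, $\Cm\At\B=\A$, and $\B\notin {\bf El}{\sf Nr}_{\alpha}\CA_{\alpha+1}$. The algebra witnessing the corollary will be precisely this $\B$, which is completely representable and lies in $\CA_{\alpha}$, as required.

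Next I would verify the three listed properties in turn. For the first, since $\At\B=\At\A$ and $\A\in {\sf Nr}_{\alpha}\CA_{\alpha+\omega}$, we obtain at once $\At\B=\At\A\in \At({\sf Nr}_{\alpha}\CA_{\alpha+\omega})$. For the second, $\Cm\At\B=\A\in {\sf Nr}_{\alpha}\CA_{\alpha+\omega}$ is one of the conclusions of Theorem~\ref{SL} verbatim. For the third, I would use the trivial containment ${\sf Nr}_{\alpha}\CA_{\alpha+1}\subseteq {\bf El}{\sf Nr}_{\alpha}\CA_{\alpha+1}$ of a class in its elementary closure: from $\B\notin {\bf El}{\sf Nr}_{\alpha}\CA_{\alpha+1}$ it follows a fortiori that $\B\notin {\sf Nr}_{\alpha}\CA_{\alpha+1}$. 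Finally, since $\A'=\Nr_{\alpha}\D$ with $\D\in\CA_{\alpha+\omega}$ yields $\A'=\Nr_{\alpha}(\Nr_{\alpha+1}\D)\in{\sf Nr}_{\alpha}\CA_{\alpha+1}$, more spare dimensions give a smaller class of neat reducts, so ${\sf Nr}_{\alpha}\CA_{\alpha+\omega}\subseteq {\sf Nr}_{\alpha}\CA_{\alpha+1}$; this justifies the parenthetical inclusion in the statement and shows that $\B\notin {\sf Nr}_{\alpha}\CA_{\alpha+1}$ entails $\B\notin {\sf Nr}_{\alpha}\CA_{\alpha+\omega}$ as well.

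There is essentially no genuine obstacle here, since the corollary is a repackaging of Theorem~\ref{SL}; the one point worth stating carefully is the conceptual punchline it is designed to illustrate (and which is invoked in the proof of Theorem~\ref{can}): the two positive conditions $\At\B\in \At{\sf Nr}_{\alpha}\CA_{\alpha+\omega}$ and $\Cm\At\B\in {\sf Nr}_{\alpha}\CA_{\alpha+\omega}$ hold \emph{simultaneously}, while $\B$ itself fails to be even a $\CA_{\alpha+1}$ neat reduct. Thus neither the membership of an atom structure in $\At({\sf Nr}_{\alpha}\CA_{\alpha+\omega})$ nor the membership of the associated complex algebra in ${\sf Nr}_{\alpha}\CA_{\alpha+\omega}$ can be transferred to an algebra generating them. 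In this sense the corollary certifies that the gap between ${\sf Nr}_{\alpha}\CA_{\beta}$ and $\bold S_d{\sf Nr}_{\alpha}\CA_{\beta}$ recorded at the end of Theorem~\ref{SL} is genuine, and it is exactly this failure of transfer that is exploited when the construction is lifted to the rainbow setting of Theorem~\ref{can}.
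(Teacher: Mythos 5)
Your proposal is correct and coincides with the paper's (implicit) argument: the corollary is stated immediately after Theorem~\ref{SL} with no separate proof, precisely because it is read off from the algebras $\A,\B$ constructed there exactly as you do, using $\At\B=\At\A$, $\Cm\At\B=\A\in{\sf Nr}_{\alpha}\CA_{\alpha+\omega}$, the containment ${\sf Nr}_{\alpha}\CA_{\alpha+1}\subseteq{\bf El}{\sf Nr}_{\alpha}\CA_{\alpha+1}$, and the identity $\Nr_{\alpha}\D=\Nr_{\alpha}(\Nr_{\alpha+1}\D)$ for the parenthetical inclusion. Your closing remark on the failure of transfer from atom structure and complex algebra to the generating algebra is exactly the point the paper makes in the paragraph following the corollary.
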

We conclude from the previous corollary that we cannot lift the result proved in theorem \ref{can} for the class 
$\bold L$ to the agebra level deleting $\At$. The result proved for atom structures is stronger than that proved on the 
level of algebras. The condition imposed in the last part of the theorem is both sufficient and necessary for 
lifting the result
to the algebra level.

What was proved in \cite{r2} is the $\sf Ra$ analogue of  the following which follows from item (2) of theorem \ref{can}.
\begin{corollary}\label{rain} Any class between ${\sf CRCA}_n$ and $\bold S_c\Nr_n\CA_{n+3}$ is not elementary. In particular, any class between $\bold S\Nr_n\CA_{\omega}\cap \bf At$ and
$\bold S_c\Nr_n\CA_{n+3}$ is not elementary.
\end{corollary}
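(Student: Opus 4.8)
The plan is to obtain this as an immediate corollary of item (2) of Theorem \ref{can}, specialized to $k=3$, by re-using the very pair of algebras produced in its proof. Recall from that proof the ``bad'' algebra $\C=\CA_{\Z,\N}$ and the countable, \emph{completely representable} algebra $\D$, for which it was established there that $\C\equiv \D$, that $\D\in \bold S_d\Nr_n\CA_{\omega}$, and that \pa\ has a \ws\ in $F^{n+3}(\At\C)$, whence $\C\notin \bold S_c\Nr_n\CA_{n+3}$ by Lemma \ref{n}. These three facts, namely $\C\equiv\D$, the ``good'' membership of $\D$, and the ``bad'' non-membership of $\C$, are all that the argument needs.

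For the first assertion, let $\bold K$ be any class with ${\sf CRCA}_n\subseteq \bold K\subseteq \bold S_c\Nr_n\CA_{n+3}$. Since $\D$ is completely representable, $\D\in {\sf CRCA}_n\subseteq \bold K$; and since $\bold K\subseteq \bold S_c\Nr_n\CA_{n+3}$ while $\C\notin \bold S_c\Nr_n\CA_{n+3}$, we get $\C\notin \bold K$. As $\C\equiv \D$, the class $\bold K$ fails to be closed under elementary equivalence, so it is not elementary. Equivalently, one observes that ${\sf CRCA}_n\cap \bold S_d\Nr_n\CA_{\omega}\subseteq {\sf CRCA}_n\subseteq \bold K\subseteq \bold S_c\Nr_n\CA_{n+3}$, so that item (2) of Theorem \ref{can} applies verbatim with $k=3$.

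For the sharper ``in particular'' formulation, recall Henkin's neat embedding theorem in the form $\RCA_n=\bold S\Nr_n\CA_{\omega}$, so that $\bold S\Nr_n\CA_{\omega}\cap \bf At$ is exactly the class of atomic representable $\CA_n$s. A complete representation forces atomicity and is in particular a representation, whence ${\sf CRCA}_n\subseteq \bold S\Nr_n\CA_{\omega}\cap \bf At$; in particular the witness $\D$ lies in $\bold S\Nr_n\CA_{\omega}\cap \bf At$. Hence any $\bold K$ with $\bold S\Nr_n\CA_{\omega}\cap \bf At\subseteq \bold K\subseteq \bold S_c\Nr_n\CA_{n+3}$ again contains $\D$ and omits $\C$, and the identical elementary-equivalence argument shows $\bold K$ is not elementary.

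There is essentially no new obstacle: the entire force of the statement resides in the data already manufactured for item (2) of Theorem \ref{can}, namely the \ws\ of \pa\ in $F^{n+3}(\At\C)$ placing $\C$ outside $\bold S_c\Nr_n\CA_{n+3}$, together with the ultrapower and elementary-chain construction (via the \ws\ of \pe\ in the games $H_k(\At\C)$) yielding the completely representable $\D\equiv \C$. The only point to be checked, and it is routine, is that the ``good'' witness $\D$ actually lands inside each prescribed lower bound, which holds simply because $\D$ is completely representable and hence atomic and representable; the two chosen algebras then straddle every class $\bold K$ under consideration.
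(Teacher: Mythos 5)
Your proposal is correct and follows essentially the same route as the paper, which simply derives the corollary from item (2) of Theorem \ref{can} using the pair $\C=\CA_{\Z,\N}\notin\bold S_c{\sf Nr}_n\CA_{n+3}$ and the elementarily equivalent completely representable $\D$. Your explicit check that the witness $\D$ lies in each prescribed lower bound (via ${\sf CRCA}_n\subseteq\bold S{\sf Nr}_n\CA_{\omega}\cap{\bf At}$) is exactly the routine verification the paper leaves implicit.
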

For relation algebras a yet strictly stronger result than that that proved in \cite{r2}, but weaker than that alledged in \cite{r} can be proved:  
\begin{corollary}\label{rain2} Any class between ${\sf RRA}\cap \bold S_d\sf Ra\CA_{\omega}$ 
and $\bold S_c{\sf Ra}\CA_{k}$, $k\geq 6$,  is not elementary.
\end{corollary}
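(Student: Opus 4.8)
The plan is to transfer the rainbow argument used to establish item (2) of theorem \ref{can} from the cylindric to the relation algebra setting, replacing the neat reduct functor ${\sf Nr}_n$ by the relation algebra reduct functor ${\sf Ra}$, the dimension bound $n+3$ by $6$ (the value of $n+3$ for the ``three dimensional'' case $n=3$ that relation algebras inhabit), and the $\CA_n$ rainbow $\CA_{\Z, \N}$ by the analogous relation algebra rainbow. First I would build the rainbow relation algebra $\R$ as the complex algebra of a rainbow relation algebra atom structure whose greens are $\{\g_i: 1\leq i<2\}\cup\{\g_0^i: i\in \Z\}$ and whose reds are $\{\r_{ij}: i<j\in \N\}$, imposing exactly the forbidden triples of theorem \ref{can}, in particular the order--preserving condition relating two greens and a red.

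Next I would run the two--sided game analysis on $\At\R$. On the one hand, I expect \pa\ to have a \ws\ in the relation algebra game $F^{6}(\At\R)$, played on atomic networks of $\R$ with $6$ available nodes that may be reused: precisely as in the cylindric case, \pa\ bombards \pe\ with cones of distinct green tints sharing a common base, the order--preserving forbidden triple forces \pe\ to label apexes with reds whose second indices descend in $\N$, and reusing the six nodes lets \pa\ prolong this for $\omega$ rounds, extracting an infinite descending sequence in $\N$, a contradiction. By the ${\sf Ra}$ analogue of lemma \ref{n} (the hyperbasis characterisation of $\bold S_c{\sf Ra}\CA_m$ in \cite[Chapters 12--13]{HHbook}) this yields $\R\notin \bold S_c{\sf Ra}\CA_6$; since $\bold S_c{\sf Ra}\CA_k\subseteq \bold S_c{\sf Ra}\CA_6$ for every $k\geq 6$, in fact $\R\notin \bold S_c{\sf Ra}\CA_k$ for all $k\geq 6$. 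On the other hand I would introduce the stronger game $H_k$ (the ${\sf Ra}$ version of the game $H_k$ of theorem \ref{can}), whose point is that \pe\ winning $H_\omega(\At\D)$ on a countable atomic $\D$ forces $\At\D\in \At({\sf Ra}\CA_\omega)$ and $\Cm\At\D\in {\sf Ra}\CA_\omega$, and show, using the underlying \ef\ forth game on $\Z$ and $\N$, that \pe\ has a \ws\ in $H_k(\At\R)$ for every finite $k$.

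From the family of winning strategies $\{H_k:k\in\omega\}$, an ultrapower together with an elementary chain argument as in \cite[Corollary 3.3.5]{HHbook2} produces a countable $\D\equiv \R$ on which \pe\ wins $H_\omega(\At\D)$. Such a $\D$ is completely representable, so $\D\in {\sf RRA}$, and since $\D\subseteq_d \Cm\At\D\in {\sf Ra}\CA_\omega$ we get $\D\in \bold S_d{\sf Ra}\CA_\omega$; hence $\D\in {\sf RRA}\cap \bold S_d{\sf Ra}\CA_\omega$. Now let $\bold L$ be any class with ${\sf RRA}\cap \bold S_d{\sf Ra}\CA_\omega\subseteq \bold L\subseteq \bold S_c{\sf Ra}\CA_k$ for some $k\geq 6$. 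Then $\D\in \bold L$, while $\R\notin \bold L$ because $\R\notin \bold S_c{\sf Ra}\CA_k$, yet $\D\equiv \R$; so $\bold L$ is not closed under elementary equivalence and is therefore not elementary. This is the promised strengthening of the ${\sf Ra}$ statement corrected in \cite{r2} (whose lower bound is the coarser ${\sf CRRA}$), and it still falls short of the erroneous claim of \cite{r}.

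The step I expect to be the main obstacle is not the rainbow combinatorics, which is a routine adaptation of the cylindric case, but the relation algebra bookkeeping behind the two game--characterisations. Concretely, one must verify the ${\sf Ra}$ analogue of lemma \ref{n}: via the theory of $m$--dimensional (hyper)bases and hypernetworks of \cite[Chapters 12--13]{HHbook}, that a \ws\ for \pa\ in the six--node game $F^{6}(\At\R)$ genuinely obstructs a complete embedding of $\R$ into ${\sf Ra}\C$ for $\C\in \CA_6$, and dually that a \ws\ for \pe\ in $H_\omega$ really delivers both the dilation $\Cm\At\D\in {\sf Ra}\CA_\omega$ and the dense inclusion $\D\subseteq_d \Cm\At\D$. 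Once these transfer lemmas are in place, the forcing of an infinite descent in $\N$ inside six reusable nodes, and the model--theoretic passage from the finite games to the transfinite one, go through as before.
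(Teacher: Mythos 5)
Your proposal is correct and is precisely the argument the paper intends: the corollary is stated there without an explicit proof, as the ${\sf Ra}$ analogue of item (2) of theorem \ref{can}, and your transfer of that rainbow/game argument --- the $\Z,\N$ order--preserving forbidden triples forcing a descending sequence in $\N$ inside the six reusable nodes of $F^6$, the stronger games $H_k$ delivering $\Cm\At\D\in {\sf Ra}\CA_{\omega}$ and hence the $\bold S_d{\sf Ra}\CA_{\omega}$ part of the lower bound, and the ultrapower/elementary--chain passage to a countable completely representable $\D\equiv \R$ --- is exactly the intended route. No gaps beyond the transfer lemmas you correctly flag as the technical burden, which are supplied by \cite{r} and \cite[Chapters 12--13]{HHbook}.
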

The strictness of the inclusion $\bold S_d\sf Ra\CA_{\omega}\subsetneq \bold S_c\sf Ra\CA_{\omega}$ is proved in \cite{r}.
However, in corollary \ref{rain2} we do not know whether we can replace $\bold S_d\sf Ra\CA_{\omega}$ by $\sf Ra\CA_{\omega}$, which we will succeed to do 
in the $\CA$ case in a moment. 
As a matter of fact, we do not know in the first place whether the last two classes are distinct or not, i.e whether the inclusion $\sf Ra\CA_{\omega}\subseteq \bold S_d\sf Ra\CA_{\omega}$ is strict or not.
However, by example \ref{SL} we know that the $\CA$ analogue of this  inclusion
is strict; the strictness witnessed the algebra denoted by $\B$  in {\it op.cit}.

To summarize: Baring in mind 
the following facts:  

\begin{itemize}

\item $\Nr_n\CA_{\omega}\subsetneq \bold S_d\Nr_n\CA_{\omega}\subsetneq \bold S_c\Nr_n\CA_{\omega}$ by example \ref{SL}, 
and the construction of $\A$ and $\B$ used in the proof of item (1) of the forthcoming theorem \ref{t}, 

\item that neither of the classes ${\sf CRCA}_n$ 
and $\bold S_d\Nr_n\CA_{\omega}\cap \bf At$ 
is contained in the other by the same algebras used in item (1) of theorem \ref{t}, and the construction in \cite{bsl}. From the construction in  \cite{bsl}, 
it can be directly distilled, that for every infinite cardinal $\kappa$, there  an atomic 
algebra $\B\in \Nr_n\CA_{\omega}\sim {\sf CRCA}_n$ having $2^{\kappa}$ many atoms.  

\item that neither of the classes ${\sf CRCA}_n$ and $\Nr_n\CA_{\omega}\cap \bf At$ are contained in each other by the same aforementioned references in the previous item,

\item and finally that $\bold S_d\Nr_n\CA_{\omega}\cap {\sf CRCA}_n\subsetneq \bold S_c\Nr_n\CA_{\omega}\cap {\sf CRCA}_n$,  with 
$\B$  used in item (1) of theorem \ref{t} witnessing the strictness of the inclusion, 
\end{itemize}
we get that item (2) of theorem  \ref{can} is stronger than corollary \ref{rain}, that item (2) of the next theorem is 
stronger than item (2) of theorem  \ref{can} 
too, and lastly that item (1)  in the following 
theorem is different than the latter two results. 

Now we prove the {\it strictly} 
stronger result from that proved in tem (2) of theorem \ref{can}, obatined by repalcing $\bold S_d\Nr_n\CA_{\omega}$ by the (smaller) class 
$\Nr_n\CA_{\omega}$. 
using the intervention of another construction.
\begin{theorem}\label{t}
\begin{enumerate}
\item Any class between $\Nr_n\CA_{\omega}\cap  {\sf CRCA}_n$ and $\bold S_d\Nr_n\CA_{n+1}$ is not elementary
\item Any class between $\Nr_n\CA_{\omega}\cap {\sf CRCA}_n$ and $\bold S_c\Nr_n\CA_{n+3}$ is not elementary
\end{enumerate}
\end{theorem}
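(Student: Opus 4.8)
The plan is to run exactly the template behind the second item of theorem \ref{can}: to show that no class $\bold K$ with $\Nr_n\CA_\omega\cap{\sf CRCA}_n\subseteq\bold K\subseteq\bold U$ is elementary, it suffices to produce two elementarily equivalent algebras $\A\equiv\C$ with $\A$ in the lower bound $\Nr_n\CA_\omega\cap{\sf CRCA}_n$ and $\C$ outside the upper bound $\bold U$ (with $\bold U=\bold S_d\Nr_n\CA_{n+1}$ for the first item and $\bold U=\bold S_c\Nr_n\CA_{n+3}$ for the second). Indeed the intervals are sensible since $\Nr_n\CA_\omega\subseteq\Nr_n\CA_{n+1}\subseteq\bold S_d\Nr_n\CA_{n+1}$ and $\Nr_n\CA_\omega\subseteq\Nr_n\CA_{n+3}\subseteq\bold S_c\Nr_n\CA_{n+3}$; and once the pair is in hand, any such $\bold K$ contains $\A$ but not $\C$ (as $\C\notin\bold U\supseteq\bold K$), so $\bold K$ separates two elementarily equivalent algebras and cannot be elementary. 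The entire gain over theorem \ref{can} therefore lies in pushing the good witness $\A$ from $\bold S_d\Nr_n\CA_\omega\cap{\sf CRCA}_n$ down into the strictly smaller $\Nr_n\CA_\omega\cap{\sf CRCA}_n$; this is the ``intervention of another construction'' the theorem refers to.

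For the bad twin $\C$ I would reuse the rainbow machinery of theorem \ref{can}(2). Building $\C$ as a rainbow-like complex algebra over suitable ordered base structures (a variant of $\CA_{\Z,\N}$), one arranges that \pe\ has a \ws\ in the finite-round game $G_k(\At\C)$ for every $k<\omega$, while \pa\ has a \ws\ in a node-bounded game. For the second item I would have \pa\ win $F^{n+3}(\At\C)$, so that lemma \ref{n} gives $\C\notin\bold S_c\Nr_n\CA_{n+3}$. For the first item the construction must be tightened so that, with a single spare dimension, \pa\ wins the weaker $(n+1)$-dimensional game that certifies exclusion only from $\bold S_d\Nr_n\CA_{n+1}$; this is exactly why the first item gives an $\bold S_d$-separation rather than an $\bold S_c$-separation, and is \emph{incomparable} to the second rather than strictly weaker. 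In both cases, since \pe\ wins every finite-round game on $\C$, a non-principal ultrapower together with an elementary chain argument (as in \cite[Corollary 3.3.5]{HHbook2}) yields a countable algebra elementarily equivalent to $\C$ on which \pe\ wins the $\omega$-round game; this countable atomic algebra is completely representable, and is the candidate good witness $\A$.

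The crux, and the step I expect to be the main obstacle, is certifying that this $\A$ is a \emph{full} neat reduct, $\A\in\Nr_n\CA_\omega$, not merely a dense subalgebra of one. Corollary \ref{SL2} is a direct warning: the conditions $\At\A\in\At\Nr_n\CA_\omega$ and $\Cm\At\A\in\Nr_n\CA_\omega$ delivered by the $H_\omega$-type game of theorem \ref{can} place $\A$ only in $\bold S_d\Nr_n\CA_\omega$ and do \emph{not} force $\A\in\Nr_n\CA_\omega$. To cross this gap I would strengthen the $\omega$-round game further so that \pe's strategy does not merely approximate a dilation but builds one outright: from the strategy I would read off an explicit $\omega$-dimensional $\B\in\CA_\omega$ together with an isomorphism of $\A$ onto $\Nr_n\B$, the extra $\omega$ coordinates being supplied by the labels manipulated in the play. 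Verifying that the embedding $\A\hookrightarrow\Nr_n\B$ is \emph{onto} the whole neat reduct, rather than merely dense in it, is the delicate point and is precisely where the new construction must depart from the dense-only construction of theorem \ref{can}.

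Granting this, the argument closes. The good witness satisfies $\A\in\Nr_n\CA_\omega\cap{\sf CRCA}_n$, it is elementarily equivalent to the bad $\C$, and $\C$ lies outside $\bold S_d\Nr_n\CA_{n+1}$ (first item) respectively outside $\bold S_c\Nr_n\CA_{n+3}$ (second item); feeding these into the opening reduction shows every intermediate class fails to be elementary. (One can in fact take the first item's bad twin to be itself completely representable but outside $\bold S_d\Nr_n\CA_{n+1}$, so that the separation there is a neat-reduct phenomenon \emph{within} ${\sf CRCA}_n$, whereas the second item's $\C$ is not completely representable at all; this is the structural reason the two items are incomparable.) The only genuinely new labour beyond theorem \ref{can} is the full-neat-reduct certification of $\A$; the rainbow analysis of $\C$, the passage to the countable twin, and the final sandwiching are all as before.
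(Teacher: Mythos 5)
Your global reduction is the right one and matches the paper's: exhibit $\A\equiv\B$ with $\A$ in the bottom class and $\B$ outside the top class, and you have correctly isolated the crux, namely upgrading the good witness from $\bold S_d{\sf Nr}_n\CA_{\omega}\cap{\sf CRCA}_n$ (which is all that theorem \ref{can}(2) delivers) to the strictly smaller ${\sf Nr}_n\CA_{\omega}\cap{\sf CRCA}_n$. The gap is that your proposed resolution of this crux --- strengthening the $\omega$-round rainbow game so that \pe's \ws\ ``builds a dilation outright'' with the embedding \emph{onto} the full neat reduct --- is left as a black box, and the paper itself contains strong evidence that no such game can exist. The countable twin is manufactured from the rainbow algebra by an ultrapower and elementary-chain argument, which controls only first-order properties; but membership in ${\sf Nr}_n\CA_{\omega}$ is precisely \emph{not} first-order (\cite[Theorem 5.4.1]{Sayedneat}), and corollary \ref{SL2} together with the final equivalence in theorem \ref{can}(2) shows that the invariants a game on $\At\D$ can certify (namely $\At\D\in\At{\sf Nr}_n\CA_{\omega}$ and $\Cm\At\D\in{\sf Nr}_n\CA_{\omega}$) do not determine whether $\D$ itself is a neat reduct: that depends on which subalgebra between $\Tm\At\D$ and $\Cm\At\D$ one has landed on, which the atom structure cannot see. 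Your plan for item (1) has a second problem of the same kind: the games of lemma \ref{n} test membership in $\bold S_c{\sf Nr}_n\CA_m$, not in $\bold S_d{\sf Nr}_n\CA_{n+1}$, and a rainbow \ws\ for \pa\ needs room to stack successive cones on a common base, which a single spare node does not provide; so ``tightening'' the rainbow to one extra dimension is not available.

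The paper resolves both difficulties by abandoning rainbows for this theorem. Item (1) uses the splitting construction of \cite{Sayedneat, IGPL}: a structure $\M$ with quantifier elimination carrying $2^{2^{\omega}}$ pairwise disjoint $n$-ary relations yields atomic $\A\equiv\B$ with $\A\in{\sf Nr}_n\CA_{\omega}\cap{\sf CRCA}_n$ (read off directly from quantifier elimination, with no game), while $\B\notin\bold S_d{\sf Nr}_n\CA_{n+1}$ is a pure cardinality and atom-counting argument: the term $_{n}{\sf s}(0,1)$, definable with one spare dimension, is a Boolean automorphism carrying the uncountable atomic relativization $\B_u$ into $\B_{Id}$, which was built countable. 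Item (2) is then obtained by playing this pair off against theorem \ref{can}(2), not by re-engineering the rainbow. So the architecture of your argument is sound, but the one step you flag as ``the only genuinely new labour'' is exactly the step your chosen machinery cannot perform; supplying a different construction there is the actual content of the theorem.
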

\begin{proof}
For item (1): We  slighty modify the construction in \cite[Lemma 5.1.3, Theorem 5.1.4]{Sayedneat} lifted to any finite $n>2$. 
The algebras $\A$ and $\B$ constructed in {\it op.cit} satisfy that
$\A\in {\sf Nr}_n\CA_{\omega}$, $\B\notin {\sf Nr}_n\CA_{n+1}$ and $\A\equiv \B$.
As they stand, $\A$ and $\B$ are not atomic, but it
can be  fixed that they are atomic, giving the same result, by interpreting the uncountably many $n$--ary relations in the signature of 
$\M$ defined in \cite[Lemma 5.1.3]{Sayedneat} for $n=3$, which is the base of $\A$ and $\B$ 
to be {\it disjoint} in $\M$, not just distinct. In fact the construction is presented in this way in \cite{IGPL}.

Let us see show why. We work with $2<n<\omega$ instead of only $n=3$. The proof presented in {\it op.cit} lifts verbatim to any such $n$.
Let $u\in {}^nn$. Write $\bold 1_u$ for $\chi_u^{\M}$ (denoted by $1_u$ (for $n=3$) in \cite[Theorem 5.1.4]{Sayedneat}.) 
We denote by $\A_u$ the Boolean algebra $\Rl_{\bold 1_u}\A=\{x\in \A: x\leq \bold 1_u\}$ 
and similarly  for $\B$, writing $\B_u$ short hand  for the Boolean algebra $\Rl_{\bold 1_u}\B=\{x\in \B: x\leq \bold 1_u\}.$
Using that $\M$ has quantifier elimination we get, using the same argument in {\it op.cit} 
that $\A\in \Nr_n\CA_{\omega}$.  The property that $\B\notin \Nr_n\CA_{n+1}$ is also still maintained.
To see why consider the substitution operator $_{n}{\sf s}(0, 1)$ (using one spare dimension) as defined in the proof of \cite[Theorem 5.1.4]{Sayedneat}.

Assume for contradiction that 
$\B=\Nr_{n}\C$, with $\C\in \CA_{n+1}.$ Let $u=(1, 0, 2,\ldots, n-1)$. Then $\A_u=\B_u$
and so $|\B_u|>\omega$. The term  $_{n}{\sf s}(0, 1)$ acts like a substitution operator corresponding
to the transposition $[0, 1]$; it `swaps' the first two co--ordinates.
Now one can show that $_{n}{\sf s(0,1)}^{\C}\B_u\subseteq \B_{[0,1]\circ u}=\B_{Id},$ 
so $|_{n}{\sf s}(0,1)^{\C}\B_u|$ is countable because $\B_{Id}$ was forced by construction to be 
countable. But $_{n}{\sf s}(0,1)$ is a Boolean automorpism with inverse
$_{n}{\sf s}(1,0)$, 
so that $|\B_{Id}|=|_{n}{\sf s(0,1)}^{\C}\B_u|>\omega$, contradiction. 
One proves that $\A\equiv \B$ exactly like in \cite{Sayedneat}.  

Now we show that $\B$ is actually outside the bigger class $\bold S_d\Nr_n\CA_{n+1}$.
Take the cardinality $\kappa$ specifying the signature of $\M$ to be $2^{2^{\omega}}$ and assume for contradiction that  
$\B\in \bold S_d\Nr_n\CA_{n+1}\cap \bf At$. 
Then $\B\subseteq_d \mathfrak{Nr}_n\D$, for some $\D\in \CA_{n+1}$ and $\mathfrak{Nr}_n\D$ is atomic. For brevity, 
let $\C=\mathfrak{Nr}_n\D$. Then $\B_{Id}\subseteq_d \Rl_{Id}\C$; the last algebra is the Boolean algebra with universe 
$\{x\in \C:  x\leq Id\}$.
Since $\C$ is atomic,   $\Rl_{Id}\C$ is also atomic.  
Using the same reasoning as above, we get that $|\Rl_{Id}\C|>2^{\omega}$ (since $\C\in \Nr_n\CA_{n+1}$). 
By the choice of $\kappa$, we get that $|\At\Rl_{Id}\C|>\omega$. 
By $\B\subseteq_d \C$, we get that $\B_{Id}\subseteq_d \Rl_{Id}\C$ and   
that $\At\Rl_{Id}\C\subseteq \At\B_{Id}$, so $|\At\B_{Id}|\geq |\At\Rl_{Id}\C|>\omega$.   
But by the construction of $\B$, we have  $|\B_{Id}|=|\At\B_{Id}|=\omega$,   
which is a  contradiction and we are done.

For the second required, namely, item (2):  We need to remove the $\bold S_d$ from item (2) of theorem  \ref{can}.  
Example \ref{SL} alerts us to the fact that 
we are not done yet. We still have some work to do.
For this purpose, we need the intervention of the construction used in item (1). 
We use  
the (modified) algebras $\A$ and $\B$ that  we know  
satisfy: $\A\in {\sf Nr}_n\CA_{\omega}$, $\B\notin {\sf Nr}_n\CA_{n+1}$, $\A\equiv \B$
and both $\A$ and $\B$ are atomic. 

In item (2) of theorem \ref{can}, we have excluded any first order definable class between 
the two classes $\bold S_d\Nr_n\CA_{\omega}\cap {\sf CRCA}_n$ and $\bold S_c\Nr_n\CA_{n+3}$. 
So hoping for a contradiction, we can only 
assume that there is a class 
$\bold M$ between $\Nr_n\CA_{\omega}\cap {\sf CRCA_n}$ and $\bold S_d\Nr_n\CA_{\omega}\cap {\sf CRCA}_n$ that is first order definable. 
Then ${\bf El}(\Nr_n\CA_{\omega}\cap {\sf CRCA}_n)\subseteq \bold M\subseteq \bold S_d\Nr_n\CA_{\omega}\cap {\sf CRCA}_n$. 
We have, $\B\equiv \A$, and $\A\in \Nr_n\CA_{\omega}\cap {\sf CRCA}_n$, hence $\B\in {\bf El}(\Nr_n\CA_{\omega}\cap {\sf CRCA}_n)\subseteq \bold S_d\Nr_n\CA_{\omega}\cap {\sf CRCA}_n$. 
But in item (1), we showed that $\B$  
is in fact outside $\bold S_d\Nr_n\CA_{n+1}\supseteq \bold S_d\Nr_n\CA_{\omega}\supseteq \bold S_d\Nr_n\CA_{\omega}\cap {\bf At}\supseteq \bold S_d\Nr_n\CA_{\omega}\cap {\sf CRCA}_n$,  
getting the hoped for contradiction,
and consequently the required.  
\end{proof}
\begin{corollary} Let $2<n<\omega$ and $k\geq 3$. Then the following classes, together with the intersection of any two of them, the last four taken at the same $k$, are not elementary:
${\sf CRCA}_n$ \cite{HH}, $\Nr_n\CA_{n+k}$ \cite[Theorem 5.4.1]{Sayedneat}, 
$\bold S_d\Nr_n\CA_{n+k}$, $\bold S_c\Nr_n\CA_{n+k}$ and the class of algebras
having complete $k$--flat representations \cite{mlq}.
\end{corollary}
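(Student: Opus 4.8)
The plan is to prove the whole corollary at once by a single \emph{sandwiching} argument that reduces every case to Theorem~\ref{t}(2). The key observation I would record first is that each of the five listed classes $\bold C$ lies between two fixed classes, namely
$$\Nr_n\CA_\omega \cap {\sf CRCA}_n \;\subseteq\; \bold C \;\subseteq\; \bold S_c\Nr_n\CA_{n+3}. \qquad (\star)$$
Granting $(\star)$, Theorem~\ref{t}(2) immediately gives that each $\bold C$ is not elementary. The same observation settles the intersections for free: if $\bold C$ and $\bold C'$ both satisfy $(\star)$, then $\Nr_n\CA_\omega\cap{\sf CRCA}_n\subseteq \bold C\cap\bold C'$ (being below each factor), while $\bold C\cap\bold C'\subseteq\bold C\subseteq\bold S_c\Nr_n\CA_{n+3}$, so $\bold C\cap\bold C'$ again satisfies $(\star)$ and Theorem~\ref{t}(2) applies to it too. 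Since the lower class in $(\star)$ does not depend on $k$ and the upper class absorbs $\bold S_c\Nr_n\CA_{n+k}$ for every $k\geq 3$, this is precisely where the clause \emph{``the last four taken at the same $k$''} is used: all admissible pairwise intersections are caught by one uniform bound.

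The only real work, then, is to verify the two inclusions of $(\star)$ for each class, and for this I would assemble three elementary monotonicity facts. First, neat-reduct classes shrink as dimensions grow: $\Nr_n\CA_\beta\subseteq\Nr_n\CA_\gamma$ whenever $n\leq\gamma\leq\beta$ (pass from $\B\in\CA_\beta$ to $\Nr_\gamma\B\in\CA_\gamma$ and note $\Nr_n\Nr_\gamma\B=\Nr_n\B$); in particular $\Nr_n\CA_\omega\subseteq\Nr_n\CA_{n+k}\subseteq\Nr_n\CA_{n+3}$ for $k\geq 3$. Second, $\bold K\subseteq\bold S_d\bold K\subseteq\bold S_c\bold K$ with $\bold S_d,\bold S_c$ monotone, as recalled just before Theorem~\ref{can}. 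Third, via Theorem~\ref{flat}(2) together with the stipulation that a complete $\omega$--flat representation is just a complete representation, ${\sf CRCA}_n=\bold S_c\Nr_n(\CA_\omega\cap\bf At)\subseteq\bold S_c\Nr_n\CA_\omega$, whence ${\sf CRCA}_n\subseteq\bold S_c\Nr_n\CA_{n+3}$ by the first two facts.

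With these in hand the verifications are routine. For ${\sf CRCA}_n$ the lower inclusion is trivial and the upper is the third fact. For $\Nr_n\CA_{n+k}$, $\bold S_d\Nr_n\CA_{n+k}$ and $\bold S_c\Nr_n\CA_{n+k}$, the lower inclusion comes from $\Nr_n\CA_\omega\cap{\sf CRCA}_n\subseteq\Nr_n\CA_\omega\subseteq\Nr_n\CA_{n+k}$ followed by $\bold S_d,\bold S_c$ inflation, and the upper inclusion from $\bold S_d\Nr_n\CA_{n+k}\subseteq\bold S_c\Nr_n\CA_{n+k}\subseteq\bold S_c\Nr_n\CA_{n+3}$, using $\Nr_n\CA_{n+k}\subseteq\Nr_n\CA_{n+3}$. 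The class of algebras having complete $(n+k)$--flat representations (the ``$k$--flat'' class of the statement) equals $\bold S_c\Nr_n(\CA_{n+k}\cap\bf At)$ by Theorem~\ref{flat}(2), so its upper inclusion into $\bold S_c\Nr_n\CA_{n+3}$ is immediate.

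The one point I expect to need care — and which I regard as the main obstacle — is the lower inclusion for this flat class, i.e.\ that a genuinely completely representable algebra has a complete $(n+k)$--flat representation. I would argue this by noting that $(n+k)$--flatness is a \emph{weaker} requirement than $\omega$--flatness (reflected algebraically in $\bold S_c\Nr_n(\CA_\omega\cap\bf At)\subseteq\bold S_c\Nr_n(\CA_{n+k}\cap\bf At)$, a complete representation being $m$--flat for every finite $m$ since the movable window never exposes a discrepancy), so that ${\sf CRCA}_n$, and a fortiori $\Nr_n\CA_\omega\cap{\sf CRCA}_n$, is contained in the complete $(n+k)$--flat class; the delicate bookkeeping is the atomicity of the intermediate dilation $\Nr_{n+k}\B$, which I would handle by taking the dilation witnessing the complete representation directly rather than truncating an $\omega$--dimensional one. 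Once $(\star)$ is secured for all five classes, Theorem~\ref{t}(2) delivers non-elementarity of each class and of every admissible pairwise intersection, completing the proof.
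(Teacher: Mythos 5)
Your proposal is correct and is exactly the derivation the paper intends: the corollary is stated without proof as an immediate consequence of Theorem \ref{t}(2), and your sandwich $(\star)$, verified class by class via the monotonicity of $\Nr_n$ in the dilation dimension, the chain $\bold K\subseteq\bold S_d\bold K\subseteq\bold S_c\bold K$, and Theorem \ref{flat}(2) for the flat class, is precisely that argument, with the intersections handled for free since both bounds are preserved under intersection. The only point needing the care you already flag is the lower inclusion ${\sf CRCA}_n\subseteq$ (complete $(n+k)$--flat class), which is rightly settled semantically rather than by truncating an $\omega$--dilation.
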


As indicated above, we do not know whether the $\sf Ra$ analogous of item (2) proved in 
theorem \ref{t}  holds or not. But the closely related following result is proved in \cite{bsl2} using instead Monk--like 
algebras. The integral relation algebra (in which $\sf Id$ is an atom) defined next
by listing its forbidden triples, will be used in the proof of the next theorem.

\begin{example}
Take $\R$ to be a symmetric, atomic relation algebra with atoms
$$\Id, \r(i),
\y(i), \bb(i):i<\omega.$$
Non-identity atoms have colours, $\r$ is red,
$\bb$ is blue, and $\y$ is yellow. All atoms are self-converse.
Composition of atoms is defined
by listing the forbidden triples.
The forbidden triples are (Peircean transforms)
or permutations of $(\Id, x, y)$ for $x\neq y$, and
$$(\r(i), \r(i), \r(j)), \; (\y(i), \y(i), \y(j)), \; (\bb(i), \bb(i), \bb(j))\; \; i\leq j < \omega$$
$\R$ is the complex algebra over this atom structure.

Let $\alpha$ be an ordinal.  $\R^\alpha$ is obtained from $\R$ by
splitting the atom $\r(0)$ into $\alpha$ parts $\r^k(0):k<\alpha$
and then taking the full complex algebra.
In more detail, we put red atoms $r^{k}(0)$ for $k<\alpha.$
In the altered algebra the forbidden triples are
$(\y(i), \y(i), \y(j)), (\bb(i), \bb(i), \bb(j)), \ \   i\leq j<\omega,$
$(\r(i), \r(i), \r(j)),  \ \  0<i\leq j<\omega,$
$(\r^k(0), \r^l(0), \r(j)), \ \   0<j<\omega, k,l<\alpha,$\\
$(\r^k(0), \r^l(0), \r^m(0)), \ \  k,l,m<\alpha.$
These algebras were used in \cite{bsl} to show that $\Ra\CA_k$ for all $k\geq 5$ is not elementary.
\end{example}

\begin{theorem}\label{raa} 
Any class between $\Ra\CA_{\omega}$ and $\Ra\CA_5$, as well as the class
 $\sf CRRA$, is not closed under $\equiv_{\infty, \omega}$. 
\end{theorem}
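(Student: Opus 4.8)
The plan is to exploit that $\equiv_{\infty,\omega}$ refines elementary equivalence $\equiv$, so it suffices to produce, inside the single family $\{\R^{\alpha}\}$ of the Example, two members lying on opposite sides of the relevant class that are moreover $\equiv_{\infty,\omega}$. Recall from \cite{bsl,bsl2} that the non-elementarity of $\Ra\CA_k$ ($k\geq 5$) and of $\sf CRRA$ is witnessed precisely by this family: there are infinite cardinals $\kappa\neq\lambda$ for which $\R^{\lambda}$ is \emph{good} (it lies in $\Ra\CA_{\omega}$, hence in every class between $\Ra\CA_{\omega}$ and $\Ra\CA_5$, and it is completely representable), while $\R^{\kappa}$ is \emph{bad} (it lies outside $\Ra\CA_5$, hence outside every such class, and is not completely representable). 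For the non-elementarity results one only needs $\R^{\lambda}\equiv\R^{\kappa}$; my aim is to upgrade this to $\R^{\lambda}\equiv_{\infty,\omega}\R^{\kappa}$.

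First I would isolate the combinatorial reason behind any such equivalence: in the atom structure $\At\R^{\alpha}$ the split atoms $\{\r^{k}(0):k<\alpha\}$ form a set of mutually \emph{indiscernible} atoms. Indeed, the only forbidden triples in which they occur, namely $(\r^{k}(0),\r^{l}(0),\r(j))$ and $(\r^{k}(0),\r^{l}(0),\r^{m}(0))$, are completely symmetric in the superscripts and treat every $\r^{k}(0)$ in exactly the same way relative to the remaining, $\alpha$-independent atoms $\Id,\r(i),\y(i),\bb(i)$ ($0<i<\omega$). Hence, for any infinite $\kappa,\lambda$, the map that is the identity on the fixed atoms and an arbitrary finite injection between the two families of split atoms is a partial isomorphism of the relational structures $\At\R^{\kappa}$ and $\At\R^{\lambda}$; since both split families are infinite, every such partial isomorphism extends on either side. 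This back-and-forth system gives $\At\R^{\kappa}\equiv_{\infty,\omega}\At\R^{\lambda}$ for all infinite $\kappa,\lambda$, by Karp's characterisation of $L_{\infty,\omega}$-equivalence.

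The hard part will be lifting this equivalence from the atom structures to the complex algebras $\R^{\alpha}=\Cm\At\R^{\alpha}$, whose universes have the different cardinalities $2^{\kappa}$ and $2^{\lambda}$. The plan is to build a back-and-forth system directly between $\Cm\At\R^{\kappa}$ and $\Cm\At\R^{\lambda}$: given finite tuples already matched, a new algebra element induces a refinement of the finite partition of the atom structure cut out by the current tuple, and I would match the new cells by respecting (i) the relation-algebra relations among cells, which are controlled by the atom-level partial isomorphism, and (ii) the cardinality type of each cell, tracking finite cells exactly and collapsing all infinite cells to a single ``infinite'' type. The indiscernibility of the split atoms is exactly what guarantees that an infinite cell of split atoms can always be re-split into matching infinite (or finite) pieces on the other side, so the system has the back-and-forth property; this is the step that must be carried out with care, and is where a general lemma transferring $\equiv_{\infty,\omega}$ from atom structures to complex algebras, in the spirit of the back-and-forth technology of \cite{HHbook}, would do the work. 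The Boolean skeleton of this argument is the elementary fact that $\wp(\kappa)\equiv_{\infty,\omega}\wp(\lambda)$ for infinite $\kappa,\lambda$.

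Granting this, the theorem follows at once. Taking the good $\R^{\lambda}$ and the bad $\R^{\kappa}$ supplied by \cite{bsl,bsl2}, we have $\R^{\lambda}\equiv_{\infty,\omega}\R^{\kappa}$. For any class $\bold K$ with $\Ra\CA_{\omega}\subseteq\bold K\subseteq\Ra\CA_5$ we get $\R^{\lambda}\in\bold K$ but $\R^{\kappa}\notin\bold K$, so $\bold K$ is not closed under $\equiv_{\infty,\omega}$; choosing instead the pair that separates $\sf CRRA$ yields the same conclusion for $\sf CRRA$. I expect the main obstacle to be exclusively the complex-algebra lift of the previous paragraph: the membership facts are inherited from \cite{bsl,bsl2}, and the atom-structure back-and-forth is routine once the indiscernibility of the split reds is observed.
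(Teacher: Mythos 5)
Your proposal is correct and follows essentially the same route as the paper: the paper also works inside the family $\R^{\alpha}$, takes $\B=\R^{\omega}\in\Ra\CA_{\omega}$ (completely representable, having countably many atoms) against a bad $\A=\R^{\mathfrak{n}}\notin\Ra\CA_{5}$, and establishes $\A\equiv_{\infty,\omega}\B$ by exactly the back-and-forth-on-the-indiscernible-split-reds argument you describe, likewise leaving the lift from atom structures to complex algebras at the level of ``as in the proof of Theorem \ref{t}, replacing $\bold 1_{\sf Id}$ by the split $\r(0)$''. The only substantive ingredient you outsource to the literature rather than prove is that the bad algebra is not completely representable, which the paper establishes directly by an Erd\H{o}s--Rado argument forcing a monochromatic triangle in any complete representation.
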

\begin{proof} In $\R^\alpha$, we use the
following abbreviations:
$r(0) = \sum_{k<\alpha}\r^k(0),$
$\r = \sum_{i<\omega}\r(i)$,
$\y = \sum_{i<\omega}\y(i)$ and 
$\bb = \sum_{i<\omega}\bb(i).$
These suprema exist because they are taken in the complex algebras which are complete.
The \emph{index} of $\r(i), \y(i)$ and $\bb(i)$ is $i$ and the index of
$\r^k(0)$ is also $0$.
Now let  $\B = \R^\omega$ and $\A=\R^{\mathfrak{n}}$.
We claim that
$\B\in \Ra\CA_{\omega}$ and $\A\equiv \B$.
For the first required, it is shown in \cite{bsl} that $\B$ has a cylindric basis by exhibiting a \ws\ for  \pe\
in the the cylindric-basis game, which is a simpler version of the hyperbasis game
\cite[Definition 12.26]{HHbook}. 
Now, let $\H$ be an $\omega$-dimensional cylindric basis for $\B$. Then  
$\Ca\H\in \CA_{\omega}$.  Consider the
cylindric algebra $\C = \Sg^{\Ca\H}\B$, the subalgebra of $\Ca\H$ generated by $\B$.  In principal, new two dimensional elements that 
were not originally in $\B$,  
can be created in $\C$ using the spare dimensions in $\Ca(\H)$.
But in fact $\B$ exhausts the $2$--dimensional elements of  $\mathfrak{Ra}\C$, more concisely, 
we have $\B=\mathfrak{Ra}\C$ \cite{bsl2}. 
Like the proof of theorem \ref{t}, we can show that $\A\equiv_{\infty, \omega}\B$ by replacing $\bold 1_{\sf Id}$ by the newly splitted $\r(0)$.
We have proved that $\B\in \Ra\CA_{\omega}$ and $\A\equiv \B$. 
In \cite{bsl2}, it is proved that $\A\notin \Ra\CA_5$. So we get the first required, namely, that any class $\bold K$, such that 
$\Ra\CA_\omega\subseteq \bold K\subseteq \Ra\CA_5$ is not 
closed under $\equiv_{\infty, \omega}$.

Now we show that $\sf CRRA$ is not closed under $\equiv_{\infty, \omega}$, strengthening the result in \cite{HH} that 
only shows that 
$\sf CRRA$ is not closed under elementary equivalence proving 
the remaining required. 
Since $\B\in \Ra\CA_{\omega}$ has countably many atoms, 
then $\B$ is completely representable \cite[Theorem 29]{r}.
For this purpose, we show that $\A$ is not completely representable. We work with the term algebra, $\Tm\At\A$, since the latter is completely representable 
$\iff$ the complex algebra is.
Let  $\r = \{\r(i): 1\leq i<\omega\}\cup \{\r^k(0): k<2^{\aleph_0}\}$, $\y = \{\y(i):  i\in \omega\}$, $\bb^+ = \{\bb(i): i\in \omega\}.$
It is not hard to check every element of $\Tm\At\A\subseteq \wp(\At\A)$ has the form  
$F\cup R_0\cup B_0\cup Y_0$, where $F$ is a finite set of atoms, $R_0$ is either empty or a co-finite subset of $\r$, $B_0$ 
is either empty or a co--finite subset of $\bb$, and $Y_0$ is either empty or a co--finite subset 
of $\y$. 
We show  that the existence of a complete representation necessarily forces a 
monochromatic triangle, that we avoided at the start when defining $\A$.
Let $x, y$ be points in the
representation with $M \models \y(0)(x, y)$.  For each $i< 2^{\aleph_0}$, there is a
point $z_i \in M$ such that $M \models {\sf red}(x, z_i) \wedge \y(0)(z_i, y)$ (some red $\sf red\in \r$).
Let $Z = \set{z_i:i<2^{\aleph_0}}$.  Within $Z$ each edge is labelled by one of the $\omega$ atoms in
$\y^+$ or $\bb^+$.  The Erdos-Rado theorem forces the existence of three points
$z^1, z^2, z^3 \in Z$ such that $M \models \y(j)(z^1, z^2) \wedge \y(j)(z^2, z^3)
\wedge \y(j)(z^3, z_1)$, for some single $j<\omega$  
or three  points $z^1, z^2, z^3 \in Z$ such that $M \models \bb(l)(z^1, z^2) \wedge \bb(l)(z^2, z^3)
\wedge \bb(l)(z^3, z_1)$, for some single $l<\omega$.  
This contradicts the
definition of composition in $\A$ (since we avoided monochromatic triangles).
We have proved that $\sf CRRA$ is not closed under $\equiv_{\infty, \omega}$, since $\A\equiv_{\infty, \omega}\B$, 
$\A$ is not completely representable,   but $\B$ is completely representable.
\end{proof}

{\bf Remark:} Fix $2<n<\omega$. The algebra $\A\in \Nr_n\CA_{\omega}$  used in theorem \ref{t} can be viewed as splitting 
the atoms of the atom structure ${\bf At}=({}^nn, \equiv_i, D_{ij})_{i,j<n}$ each to $\kappa$--many atoms so 
$\A$ can be denoted 
${\sf split}(\bold 1_{Id}, {\bf At}, \kappa)$ $(\kappa$ an uncountable cardinal).  
The algebra $\B\notin \Nr_n\CA_{n+1}$ can be viewed as splitting the same atom structure, each  atom -- except for one atom that is split into countably many atoms -- 
is also split into $\kappa$--many atoms, so $\B$ can be denoted by 
${\sf split}(\bold 1_{Id}, {\bf At}, \omega).$  By the same token, for an ordinal $\alpha$, $\R^{\alpha}$ can be denoted by
${\sf split}(\r(0), \R, \alpha)$ short for splitting (the red atom) 
$\r(0)$ into  $\alpha$ parts.

\begin{corollary} \label{Ra}Let $k\geq 5$. Then the classes 
$\sf CRRA$, $\sf Ra\CA_k$, $\bold S_d{\sf Ra}\CA_k$ and $\bold S_c{\sf Ra}\CA_k$ are not elementary \cite{HH, bsl2, r}. 
The first two classes are not closed under
$\equiv_{\infty\omega}$, but are closed under ultraproducts. Furthermore, $\Ra\CA_\omega\subseteq \bold S_d\Ra\CA_{\omega}\subsetneq 
\bold S_c\Ra\CA_{\omega}$.  
\end{corollary}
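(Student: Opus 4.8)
The plan is to derive the whole corollary from Theorem \ref{raa} and Corollary \ref{rain2}, using the elementary observation that $\equiv_{\infty,\omega}$ refines elementary equivalence: since $L_{\omega,\omega}\subseteq L_{\infty,\omega}$, any pair $\A\equiv_{\infty,\omega}\B$ satisfies $\A\equiv\B$. Hence if a class $\bold K$ is \emph{split} by such a pair (one member in, the other out), the same pair certifies that $\bold K$ is not closed under $\equiv$, so $\bold K$ is not elementary. Every witness will be the Monk--like pair $\A=\R^{2^{\aleph_0}}$ and $\B=\R^\omega$ of Theorem \ref{raa}, for which $\B\in\Ra\CA_\omega$, $\A\equiv_{\infty,\omega}\B$, $\A\notin\Ra\CA_5$, $\B$ is completely representable and $\A$ is not.

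First I would dispose of $\sf CRRA$ and $\Ra\CA_k$. For $\sf CRRA$ non--elementarity is immediate from Theorem \ref{raa}, since $\B\in\sf CRRA$, $\A\notin\sf CRRA$ and $\A\equiv_{\infty,\omega}\B$. For $\Ra\CA_k$ with $5\le k\le\omega$ I invoke the standard monotonicity $\Ra\CA_{k+1}\subseteq\Ra\CA_k$ (valid for $k\ge 3$, as the relation--algebra reduct uses only the first three dimensions, whence $\Ra\C=\Ra\,\Nr_k\C$), giving $\Ra\CA_\omega\subseteq\Ra\CA_k\subseteq\Ra\CA_5$; thus $\Ra\CA_k$ is a class between $\Ra\CA_\omega$ and $\Ra\CA_5$ and Theorem \ref{raa} applies. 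Concretely $\B\in\Ra\CA_\omega\subseteq\Ra\CA_k$ while $\A\notin\Ra\CA_5\supseteq\Ra\CA_k$ forces $\A\notin\Ra\CA_k$. Closure under ultraproducts of these two classes is routine: $\Ra$ is a relativized neat--reduct whose defining conditions are first order, so $\prod_{i/U}\Ra\C_i=\Ra\prod_{i/U}\C_i$ by a \Los\ argument and $\CA_k$ is a variety; while $\sf CRRA$ is pseudo--elementary ($PC_{\Delta}$), and $PC_{\Delta}$ classes are closed under ultraproducts. This is exactly the Hirsch--Hodkinson phenomenon of a $PC_{\Delta}$ class closed under ultraproducts yet not elementary.

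Next I would treat $\bold S_d\Ra\CA_k$ and $\bold S_c\Ra\CA_k$. For $k\ge 6$ both sit in the range of Corollary \ref{rain2}: one has $\bold S_d\Ra\CA_\omega\subseteq\bold S_d\Ra\CA_k\subseteq\bold S_c\Ra\CA_k$ and $\bold S_d\Ra\CA_\omega\subseteq\bold S_c\Ra\CA_\omega\subseteq\bold S_c\Ra\CA_k$, and since $\bold S_d\Ra\CA_\omega\subseteq\sf RRA$ the lower bound equals ${\sf RRA}\cap\bold S_d\Ra\CA_\omega$; hence both are non--elementary by \ref{rain2}. The remaining case $k=5$ is the main obstacle, because $\A\notin\Ra\CA_5$ does \emph{not} place $\A$ outside the strictly larger class $\bold S_c\Ra\CA_5$, and mere failure of complete representability only yields $\A\notin\bold S_c\Ra\CA_\omega$, which is consistent with $\A\in\bold S_c\Ra\CA_5$. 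Here I would upgrade the argument of \cite{bsl2} that $\A\notin\Ra\CA_5$: via the $\Ra$--analogue of Lemma \ref{n}, a winning strategy for \pa\ in the game $F^{5}(\At\A)$ — in which \pa\ may reuse the five nodes — forces $\A\notin\bold S_c\Ra\CA_5$ for the atomic (here complete) algebra $\A$, and this stronger strategy is precisely what the forbidden--triple / Erd\H{o}s--Rado analysis of $\A$ already supplies. With $\A\notin\bold S_c\Ra\CA_5$ (hence $\A\notin\bold S_d\Ra\CA_5$, as $\bold S_d\subseteq\bold S_c$) and $\B\in\Ra\CA_\omega\subseteq\bold S_d\Ra\CA_5$, the pair $(\A,\B)$ splits both classes along $\equiv_{\infty,\omega}$. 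Verifying that \pa's strategy certifies $\bold S_c$--non--membership, and not merely $\Ra$--non--membership, is the delicate point.

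Finally, the inclusion chain: $\Ra\CA_\omega\subseteq\bold S_d\Ra\CA_\omega$ holds trivially, every algebra being dense in itself, and $\bold S_d\Ra\CA_\omega\subseteq\bold S_c\Ra\CA_\omega$ since $\bold S_d\subseteq\bold S_c$ for any class of $\sf BAO$s. The strictness $\bold S_d\Ra\CA_\omega\subsetneq\bold S_c\Ra\CA_\omega$ is the cited result of \cite{r}. I would close by noting that the first inclusion is deliberately left non--strict, as whether $\Ra\CA_\omega=\bold S_d\Ra\CA_\omega$ remains open.
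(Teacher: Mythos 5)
Most of what you write is sound, and where it works it is more self-contained than the paper's own proof, which settles essentially everything by citation: ultraproduct closure via pseudo-elementarity (as you also argue), the strictness of $\bold S_d\Ra\CA_{\omega}\subsetneq\bold S_c\Ra\CA_{\omega}$ by reference to \cite[Theorem 36]{r}, and ``the rest'' by reference to \cite{r}. Your treatment of $\sf CRRA$ and of $\Ra\CA_k$ for all $5\leq k\leq\omega$ from the pair $\A=\R^{2^{\aleph_0}}$, $\B=\R^{\omega}$ of Theorem \ref{raa}, your Ło\'s/pseudo-elementarity argument for ultraproducts, and your handling of $\bold S_d\Ra\CA_k$ and $\bold S_c\Ra\CA_k$ for $k\geq 6$ via Corollary \ref{rain2} are all correct and are exactly the route the surrounding text suggests.

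The genuine gap is the case $k=5$ for $\bold S_d\Ra\CA_5$ and $\bold S_c\Ra\CA_5$, and your proposed repair does not close it. You assert that the forbidden-triple/Erd\H{o}s--Rado analysis of $\A$ ``already supplies'' a winning strategy for $\forall$ in $F^5(\At\A)$, which via the $\Ra$-analogue of Lemma \ref{n} would give $\A\notin\bold S_c\Ra\CA_5$. But the Erd\H{o}s--Rado argument in the proof of Theorem \ref{raa} refutes a \emph{complete representation} by producing $2^{\aleph_0}$ witnesses $z_i$ that must coexist in the base before the pigeonholing yields a monochromatic triangle; nothing of that kind can be staged on a board of five reusable nodes, on which at most five points are ever simultaneously present. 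The separate fact $\A\notin\Ra\CA_5$ from \cite{bsl2} rests on a counting argument about the split atoms in the style of item (1) of Theorem \ref{t}; such arguments survive passage to dense subalgebras (so one may hope for $\A\notin\bold S_d\Ra\CA_5$), but upgrading them to exclude membership in a $\bold S_c$-closure is precisely the step that required the rainbow machinery in the $\CA$ case and that only reaches $k\geq 6$ on the relation algebra side in Corollary \ref{rain2}; the paper itself records that the sharper $\Ra$-analogues of the $\CA$ statements are not known to it. So for $k=5$ you must either verify in detail that the argument of \cite{bsl2} really hands $\forall$ a winning strategy in the five-node game (and not merely $\Ra\CA_5$-non-membership), or do what the paper does and fall back on citing \cite{r} for these two classes.
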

\begin{proof} The first two classes are closed under ultraproducts because they are psuedo-elementary (reducts of elementary classes), 
cf. \cite[Item (2), p. 279]{HHbook}, 
\cite[Theorem 21]{r}. Proving the strictness of the last inclusion can be easily distilled from the proof of 
\cite[Theorem 36]{r}. 
The rest of the alleged statements can also be found in \cite{r}. 
\end{proof}
For a class $\sf K$ of $\sf BAO$s, recall that $\sf K\cap \bf At$ denotes the class of atomic algebras in $\K$.
Let ${\sf FRRA}=\{\A\in {\sf RRA}: A=\wp(^2U) \text { some non--empty set $U$} \}.$ 
We use the relation algebra analogue of lemma \ref{n} proved in \cite{r}. Following the notation in {\t op.cit}, 
we denote the the $\Ra$ analogue of the game $\bold G^m$ by $F^m$ which is like the usual atomic 
game $G_r^{m}$ \cite{HHbook}, when $r=\omega$, and \pa\ is allowed to reuse the $m$ nodes in play.

From the blow up and blur construction used in \cite{ANT} 
one can show that the existence a finite relation algebra with an $n$--blur and no infinite $m$--dimensional hyperbasis with $2<n<m\leq \omega$
implies that $\bold S{\sf Nr}_n\CA_m$ is not atom--canonical. Assume that $\R$ is  such a relation algebra.
Take $\B={\sf Bb}_n(\R, J, E)$ with last notation as in \cite[Top of p. 78]{ANT}. Then $\B\in \RCA_n$. We claim that $\C=\Cm\At\B\notin \bold S{\sf Nr}_n\CA_{m}$.
Suppose for contradiction that $\C\subseteq \mathfrak{Nr}_n\D$,
where $\D$ is atomic, with $\D\in \CA_{m}$.  Then $\C$ has a (necessarily infinite $m$--flat representation), hence 
$\Ra\C$ has an infinite $m$--flat representation as an $\RA$.  But $\R$ embeds into $\Cm\At({\sf Bb}(\R, J, E)$) which, in turn, 
embeds  into $\Ra\C$, so $\R$ has an infinite $m$--flat representation. 
Hence $\R$ has a $m$--dimensional infinite hyperbases which
contradicts the hypothesis.     

We know that such algebras exist, having even {\it strong $l$--blurs} for any pre--assighned $n\leq l<\omega$
when $m=\omega$, identifying the existence of an 
$\omega$--dimensional infinite hyperbasis with the existence of a representation on an infinite base. 
Such algebras are the Maddux algebras denoted in \cite[Lemma 5]{ANT} by $\mathfrak{E}_k(2, 3)$ with $k$ the finite 
number of non--identity atoms depending recursively on $l$.  Here by a strong $l$--blur, we understand an $l$ blur $(J, E)$ 
($J$ is the set of blure and $E$ is the index blur)
as in  \cite[Definition 3.1]{ANT} satisfying using the notation in {\it op.cit}:  
$(\forall V_1,\ldots V_n, W_2,\ldots W_n\in J)(\forall T\in J)(\forall 2\leq i\leq n)
{\sf safe}(V_i,W_i,T)$ that is, there is for $v\in V_i$, $w\in W_i$ and $t\in T$, $v;w\leq t.$ 
 This last condition  formulated as $(J5)_n$ on \cite[p.79]{ANT}, is obtained from 
$(J4)_n$ formulated in the definition of an $l$--blur \cite[p.72]{ANT}, by replacing $\exists$ by $\forall$ plainly 
giving  a stronger condition.

The following theorem on non--atom canonicity was proved in \cite{weak} using Monk--like algebras.
\begin{theorem}\label{conditional2} Let $2<n<\omega$. Then there exists an atomic  countable
relation algebra $\R$, such that $\Mat_n(\At\R)$ forms an $n$--dimensional cylindric basis, $\A=\Tm{\sf Mat}_n(\At\R)\in {\sf RQEA}_n$, while
even the diagonal free reduct of the \de\ completion of $\A$, namely, $\Cm{\sf Mat_n}(\At\R$) is not representable. In particular, 
${\sf Mat}_n(\At\R)$ is a weakly, but not strongly representable 
$\QEA_n$ atom structure.
\end{theorem}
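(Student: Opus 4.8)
The plan is to realise $\R$ as a \emph{Monk-like} relation algebra, in the spirit of the algebra displayed just before theorem \ref{raa} and of the blow up and blur used in the first item of theorem \ref{can}, with the rainbow obstruction there replaced by a purely Ramsey-theoretic one. Concretely I would let $\R$ be the term algebra over a countable symmetric relation-algebra atom structure whose atoms are $\Id$ together with coloured, self-converse non-identity atoms $\r(i),\y(i),\bb(i)$ $(i<\omega)$, and whose composition is prescribed by a Monk pattern of forbidden monochromatic triples (for instance $(\r(i),\r(i),\r(j))$, $(\y(i),\y(i),\y(j))$ and $(\bb(i),\bb(i),\bb(j))$ for $i\le j$, together with the Peircean transforms of $(\Id,x,y)$ for $x\ne y$). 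Then $\R$ is countable and atomic. All that matters about the pattern is that (a) a triangle whose three edges carry one and the same shade is always forbidden, while (b) every triple not of a forbidden shape is consistent, so that finite configurations can always be completed; the finitely many infinite suprema $\r=\sum_i\r(i)$, $\y=\sum_i\y(i)$ and $\bb=\sum_i\bb(i)$, which exist only in the completion, gather the shades into finitely many colour classes.

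Next I would check that $\Mat_n(\At\R)$, the set of $n$-dimensional basic matrices over $\At\R$, satisfies Maddux's amalgamation and completion conditions and hence is an $n$-dimensional cylindric basis; the forbidden-triple pattern is precisely what is needed to amalgamate two matrices agreeing on a common row. Since $\R$ is symmetric (every atom is self-converse), transposition of matrices supplies the polyadic substitution operators, so the complex algebra $\Cm\Mat_n(\At\R)$ carries a full $\QEA_n$ structure. Here $\A=\Tm\Mat_n(\At\R)$ is the term algebra generated by the matrices (the $\QEA_n$ atom structure in question being $\Mat_n(\At\R)$), and $\Cm\Mat_n(\At\R)$ is its \de\ completion.

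To obtain $\A\in\RQEA_n$ I would first represent the relation-algebra term algebra $\Tm\At\R=\R$. Using consistency of every finite network of atoms, \pe\ has a \ws\ in the usual atomic network game, and the limit of a play in which \pa\ schedules all demands yields an ordinary representation of $\R$ on a countable homogeneous base, built exactly as the model $M$ in theorem \ref{can}. A Maddux-style argument then promotes this to a representation of $\A=\Tm\Mat_n(\At\R)$, so $\A\in\RQEA_n$ and $\Mat_n(\At\R)$ is weakly representable. The decisive point is that $\A$ contains only finite and cofinite joins of matrices, so the merging suprema $\r,\y,\bb$ are \emph{absent} from $\A$ and the Ramsey obstruction below cannot be triggered at the term level.

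Finally, for the failure of strong representability I would work inside $\Cm\Mat_n(\At\R)$, where $\r,\y,\bb$ do exist, and show that it is not representable even as a $\Df_n$ (diagonal-free reduct). Suppose it had a representation $M$. Fixing two points and using the infinite join $\r$ to produce an infinite set $Z$ of common witnesses, every edge inside $Z$ is labelled by one of the countably many shades of the remaining colours; the Erd\H{o}s--Rado theorem then forces three points of $Z$ spanning a triangle monochromatic in a single shade, contradicting (a). As this uses only composition — hence only cylindrifiers, never the diagonal — even the $\Df_n$-reduct of $\Cm\Mat_n(\At\R)$ is not representable, so $\Mat_n(\At\R)$ is not strongly representable. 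Taken together with the previous paragraph this gives the theorem. The step I expect to be the main obstacle is calibrating the colour pattern so that the two demands coexist: it must be loose enough that all finite matrices amalgamate and $\R$ genuinely represents (securing the cylindric-basis property and $\A\in\RQEA_n$ for the richer polyadic-equality signature), yet rigid enough that the merged colour classes still forbid single-shade triangles, so that Erd\H{o}s--Rado bites in the complete algebra while leaving the term algebra untouched.
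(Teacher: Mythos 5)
Your plan breaks down at the decisive step, the non-representability of $\Cm{\sf Mat}_n(\At\R)$, and it breaks down for a structural reason rather than a fixable technical one. With your pattern the infinite join $\r=\sum_i\r(i)$ produces only \emph{countably} many witnesses $z_i$ (one per red shade $\r(i)$, via the consistent triples $(\r(i),\y(0),\y(0))$), and a countable clique whose edges carry countably many shades need not contain a triangle monochromatic in a single shade: Erd\H{o}s--Rado gives nothing here, and even $2^{\aleph_0}$ points do not suffice against $\aleph_0$ colours, as the Sierpi\'nski-type colouring $\{f,g\}\mapsto\min\{k:f(k)\neq g(k)\}$ on ${}^{\omega}2$ shows. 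Worse, a triangle whose three edges are red with three \emph{distinct} indices is consistent in your algebra (only same-index pairs occur in the forbidden triples), so even a triangle monochromatic in a colour class would yield no contradiction. The paper's own theorem \ref{raa} confirms that no repair is possible: the full complex algebra $\R^{\omega}$ over a countable atom structure of exactly this Monk type is shown there to lie in $\Ra\CA_{\omega}$ and to be \emph{completely representable}; the obstruction in that theorem only appears after $\r(0)$ is split into $2^{\aleph_0}$ parts, and even then it refutes only complete representability, not representability. So your $\R$ proves the opposite of what you need.

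The paper keeps the number of colours \emph{finite}. Its relation algebra atom structure is $\alpha(\G)=\{{\sf Id}\}\cup(\G\times n)$ for a graph $\G$ with $\chi(\G)=N<\infty$ (e.g.\ a countable disjoint union of $N$-cliques, $N\geq n(n-1)/2$): the colour of an atom $(a,i)$ is $i<n$, and a triple of non-identity atoms is forbidden iff all three have the same colour \emph{and} the underlying triple of nodes spans no edge of $\G$. Non-representability of $\Cm{\sf Mat}_n(\alpha(\G))$, already of its diagonal-free reduct, then follows from \cite[Theorem 14.11]{HHbook}: a representation induces a partition of $\G$ into finitely many independent sets, i.e.\ a finite colouring, contradicting the forbidden-triple pattern via ordinary Ramsey on finitely many colours. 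Weak representability is also obtained differently from what you sketch: one builds an $n$-homogeneous model $M$ of a Monk theory in the enlarged signature $(\G\cup\{\rho\})\times n$, where $\rho$ is a shade of red outside $\G$, takes the relativized set algebra on the set $W\subsetneq{}^nM$ of assignments avoiding $\rho$, and shows by an $n$-back-and-forth system that relativized and classical semantics coincide, so that the term algebra is genuinely representable while the extra shade never enters the atom structure. To salvage your route you would have to replace the three infinite colour classes by a graph-based pattern with finitely many colours and finite chromatic number; as written, your construction yields a (completely) representable completion.
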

\begin{proof} Let $\G$ be a graph.
Let $\rho$ be a `shade of red'; we assume that $\rho\notin \G$. Let $L^+$ be the signature consisting of the binary
relation symbols $(a, i)$, for each $a \in \G \cup \{ \rho\}$ and
$i < n$.  Let $T$ denote the following (Monk) theory in this signature:
$M\models T\iff$
for all $a,b\in M$, there is a unique $p\in (\G\cup \{\rho\})\times n$, such that
$(a,b)\in p$ and if  $M\models (a,i)(x,y)\land (b,j)(y,z)\land (c,k)(x,z)$, $x,y, z\in M$, then $| \{ i, j, k \}> 1 $, or
$ a, b, c \in \G$ and $\{ a, b, c\} $ has at least one edge
of $\G$, or exactly one of $a, b, c$ -- say, $a$ -- is $\rho$, and $bc$ is
an edge of $\G$, or two or more of $a, b, c$ are $\rho$.

We denote the class of models of $T$ which can be seen as coloured undirected
graphs (not necessarily complete) with labels coming from
$(\G\cup \{\rho\})\times n$  by $\GG$.

Now specify $\G$ to be either:

\begin{itemize}
\item the graph with nodes $\N$ and edge relation $E$
defined by $(i,j)\in E$ if $0<|i-j|<N$, where $N\geq n(n-1)/2$ is a postive number.

\item or the $\omega$ disjoint union of $N$ cliques, same $N$.
\end{itemize}

In both cases the countably infinite graphs 
contain infinitey many $N$ cliques.  In the first they overlap, in the second they do not.
One shows that there is a countable ($n$--homogeneous) coloured graph  (model) $M\in \GG$
with the following property \cite[Proposition 2.6]{Hodkinson}:

If $\triangle \subseteq \triangle' \in \GG$, $|\triangle'|
\leq n$, and $\theta : \triangle \rightarrow M$ is an embedding,
then $\theta$ extends to an embedding $\theta' : \triangle'
\rightarrow M$.

Here the choice of $N\geq n(n-)/n$ is not 
haphazard it bounds the number of edges of any graph $\Delta$ of size $\leq n$.
This is crucial to show that for any permutation $\chi$ of $\omega \cup \{\rho\}$, $\Theta^\chi$
is an $n$-back-and-forth system on $M$ \cite{weak}, so that the countable atomic  
set algebra $\A$ based on $M$ whose top element is
obtained from $^nM$ by discarding assignments whose edges are labelled by one of $n$--shaded of reds ($(\rho, i): i<n)$) forming $W\subsetneq {}^nM$, 
is classically representable.  The classical semantics of  $L_{\omega, \omega}$ formulas 
and relativized semantics (restricting assignmjents to $W$), coincide, so that 
$\A$ is isomorphic  to a set algebra with top element $^nM$. 

Consider the following relation algebra atom structure 
$\alpha(\G)=(\{{\sf Id}\}\cup (\G\times n), R_{\sf Id}, \breve{R}, R_;)$, where:
The only identity atom is $\sf Id$. All atoms are self converse,
so $\breve{R}=\{(a, a): a \text { an atom }\}.$
The colour of an atom $(a,i)\in \G\times n$ is $i$. The identity $\sf Id$ has no colour. A triple $(a,b,c)$
of atoms in $\alpha(\G)$ is consistent if
$R;(a,b,c)$ holds $(R;$ is the accessibility relation corresponding to composition). Then the consistent triples 
are $(a,b,c)$ where:
One of $a,b,c$ is $\sf Id$ and the other two are equal, or
none of $a,b,c$ is $\sf Id$ and they do not all have the same colour, or
$a=(a', i), b=(b', i)$ and $c=(c', i)$ for some $i<n$ and
$a',b',c'\in \G$, and there exists at least one graph edge
of $G$ in $\{a', b', c'\}$.

$\C$ is not representable because $\Cm(\alpha(\G))$ is not representable  and 
${\sf Mat}_n(\alpha(\G))\cong \At\A$. To see why, for each  $m  \in {\Mat}_n(\alpha(\G)), \,\ \textrm{let} \,\ \alpha_m
= \bigwedge_{i,j<n}  \alpha_{ij}. $ Here $ \alpha_{ij}$ is $x_i =
x_j$ if $ m_{ij} = \Id$ and $R(x_i, x_j)$ otherwise, where $R =
m_{ij} \in L$. Then the map $(m \mapsto
\alpha^W_m)_{m \in {\Mat}_n(\alpha(\G))}$ is a well - defined isomorphism of
$n$-dimensional cylindric algebra atom structures.
Non-representability follows from the fact that  $\G$ is a bad graph, that is, 
$\chi(\G)=N<\infty$ \cite[Definition 14.10, Theorem 14.11]{HHbook}.
The relation algebra atom structure specified above is exactly like the one in Definition 14.10 in {\it op.cit}, except that we have $n$ colours 
rather than just three.
\end{proof}

Now using the construction in \cite{weak, ANT} on `non atom--canonicity', 
we prove `non--finite axiomatizability results for relation and 
cylindric algebra.
Let ${\sf LCA}_n$ denote the elementary class of ${\sf RCA}_n$s satisfying the Lyndon conditions. 
We stipulate that $\A\in {\sf LCA}_n\iff$ $\A$ is atomic and $\At\A$ satifies the Lyndon conditions \cite[Definition 3.5.1]{HHbook2}. 
In the same sense, let $\sf LCRA(\subseteq \sf RRA)$ denote  the elementary class of $\sf RA$s satsfying the Lyndon conditions \cite[Definition pp. 337]{HHbook}.
We now use the above construction
proving non--atom canonicity to prove non--finite 
axiomatizability.
We use `bad' non--representable 
Monk--like algebras converging to a `good' representable one. 
In the process, we recover the results of Monk and Maddux on non--finite axiomatizability of both 
${\sf RCA}_n$  $(2<n<\omega)$  
and $\sf RRA$.

Let $2<n<\omega$. Then $\sf LCRA$ and ${\sf LCA}_n$ are 
not finitely axiomatizable.

{\it First proof:} Let $\A_l$ be the atomic $\sf RCA_n$ 
constructed from $\G_l$, $l\in \omega$ where $\G_l$  which is a disjoint countable union of $N_l$ cliques, such that for $i<j\in \omega$, $n(n-1)/n\leq N_i<N_j.$
Then  $\Cm\A_l$ with $\A_l$ based on $\G_l$, as constructed in the proof of theorem \ref{conditional2} is not representable.
So $(\Cm(\A_l): l\in \omega)$ is a sequence of non--representable algebras,
whose ultraproduct $\B$, being based on the ultraproduct of graphs having arbitrarily large chromatic number, 
will have an infinite clique, and so $\B$ will be completely representable \cite[Theorem 3.6.11]{HHbook2}.
Likewise, the sequence $(\Tm(\A_l): l\in \omega)$ is a sequence of representable, 
but not completely representable algebras, whose ultraproduct is completey representable.
The same holds for the sequence of 
relation algebras $(\R_l:l\in \omega)$ constructed as in \cite{weak}  for 
which $\Tm\At\A_l\cong \Mat_n\At\R_l$.

{\it Second proof:}  For each $2<n\leq l<\omega$, 
let $\R_l$ be the finite Maddux algebra $\mathfrak{E}_{f(l)}(2, 3)$ used in \cite{ANT} with strong $l$--blur
$(J_l,E_l)$ and $f(l)\geq l$ denoted by $l<k<\omega$ in \cite[Lemma 5]{ANT}. The relation algebra $\R_l$ has no representations on an infinite base. 
Let ${\cal R}_l={\sf Bb}(\R_l, J_l, E_l)\in {\sf RRA}$ with atom structure $\bf At$ obtained by blowing up and blurring $\R_l$ 
(with underlying set  denoted by $At$ on \cite[p.73]{ANT}), and let $\A_l=\Nr_n{\sf Bb}_l(\R_l, J_l, E_l)$ as defined on \cite[Top of p. 78]{ANT}; then $\A_l\in \RCA_n$. 
Then  $(\At{\cal R}_l: l\in \omega\sim n)$, and $(\At\A_l: l\in \omega\sim n)$ are sequences of weakly representable atom structures 
that are not strongly representable with a completely representable 
ultraproduct. The (complex algebra) sequences $(\Cm \At{\cal R}_l: l\in \omega\sim n)$, 
$(\Cm\At\A_l: l\in \omega\sim n$) are typical examples of what Hirsch and Hodkinson call `bad Monk (non--representable) algebras' 
converging to  `good (representable) one, 
namely their (non-trivial) ultraproduct. 
Observe that for $2<n\leq k <m<\omega$, $\A_k=\mathfrak{Nr}_k\A_m$. Such sequences witness the non--finite axiomatizability of the class 
representable agebras and the elementary closure of the class completely representable ones, namely, 
the class of algebras satisfying the Lyndon conditions. This recovers Monk's and Maddux's classical results  
on non--finite axiomatizability of $\sf RRA$s and $\RCA_n$s
since algebras considered are generated by a 
single $2$--dimensional elements. 

\subsection{Rainbows versus Monk--like algebras, pros and cons}  

The model--theoretic ideas used in the theorems \ref{can} and the construction in \cite{weak} proving 
theorem \ref{conditional2} are quite close but only superficially.
In the overall structure; they follow closely the model--theoretic framework in \cite{Hodkinson}.
In both cases, we have finitely many shades of red outside a  Monk-like and rainbow
signature,  that were used as labels to construct an $n$-- homogeneous model $M$ in the
expanded signature. Though the shades of reds are {\it outside} the signature, they were used as labels
during an $\omega$--rounded game played on labelled finite graphs--which can be seen as finite models in  the extended signature having size $\leq n$--
in which \pe\ had a \ws,  enabling her to
construct the required $M$ as a countable limit of the finite graphs played during the game. The construction, in both cases, entailed that any subgraph (substructure)
of $M$ of size $\leq n$, is independent of its location in $M$;
it is uniquely determined by its isomorphism type.

A relativized set algebra $\A$ based on $M$ was constructed
by discarding all assignments whose edges are labelled
by these shades of reds,  getting a set of $n$--ary sequences $W\subseteq {}^nM$. This $W$ is definable in $^nM$ by an $L_{\infty, \omega}$ formula
and the semantics with respect to $W$ coincides with classical Tarskian semantics (on $^nM$) for formulas of the
signature taken in $L_n$ (but not for formulas taken in $L_{\infty, \omega}^n$).
This was proved in both cases using certain $n$ back--and--forth systems, thus $\A$ is representable classically,
in fact it (is isomorphic to a set algebra that) has base $M$.
{\it The heart and soul of both proofs is to replace the reds labels by suitable
non--red binary relation symbols within an $n$ back--and--forth system, so that one  can
adjust that the system maps a tuple $\bar{b} \in {}^n M \backslash W$ to a tuple
$\bar{c} \in W$ and this will preserve any formula
containing the non--red symbols that are
`moved' by the system.  In fact, all
injective maps of size $\leq n$ defined on $M$ modulo an appropriate
permutation of the reds will
form an $n$ back--and--forth system.} 

This set algebra $\A$
was further proved to be atomic, countable, and simple (with top element $^nM$). Surjections into 
the subgraphs of size $\leq n$ of $M$ whose edges are not labelled by any shade of red are 
the atoms of $\A$   
and its \de\ completion, $\C=\Cm\At\A$.
Both algebras have top element $W$, but $\Cm\At\A$ is not in $\bold S{\sf Nr}_n\CA_{n+3}$ in case of the rainbow construction, least representable,
and it  is  (only) not representable in the case of the Monk--like algebra.
In case of both constructions `the shades of  red' -- which can be intrinsically identified with
non--principal ultrafilters in $\A$,  were used as colours, together with the principal ultrafilters
to represent completely $\A^+$, inducing a representation of $\A$.  Non--representability of $\Cm\At\A$ in the Monk case, used Ramsey's theory. The non neat--embeddability of
$\Cm\At\A$ in the rainbow case, used {\it the finite number of greens}, that gave us information on
when $\Cm\At\A$   `stops to be representable.'  
The reds in both cases has to do with representing $\A$.
The model theory used for both constructions is almost identical.

Nevertheless,  from the algebraic point of view,
there is a crucial difference.
The non--representability of $\Cm\At\A$ was tested by a game between the two players \pa\ and \pe.
The \ws's of the two players are independent, this is  reflected by the fact
that we have  two `independent parameters' $\sf G$ (the greens)  and $\sf R$ (the reds) that are 
more were finite irreflexive complete graphs. 
In  Monk--like constructions like the one used in \cite{weak} to show that $\RCA_n$ is not atom--canonical by constructing a countable atomic $\A$
the non representability of $\Cm\At\A$ was also tested by a game between \pe\ and \pa. 
But in {\it op.cit}  \ws's are interlinked, one operates through the other; hence only one parameter is the source of colours.
This parameter is a graph $\G$ which is a countable disjoint union of $N$ cliques where $N\geq n(n-1)/2$. Non--representability of the complex algebra $\Cm\At\A$ in
this case depends only on the {\it chromatic number of $\G$}, via an application of Ramseys' theorem; this number is $<\omega$; 
in fact it is $N$.

{\it In both cases two players operate using `cardinality of a labelled graph'.
\pa\ trying to make this graph too large for \pe\ to cope, colouring some of
its edges suitably.} 
For the rainbow case, it is a red clique formed during the play as we have seen in the model theoretic proof of theorem \ref{can}. 
It might be clear in both cases (rainbow and Monk--like algebras),  to see that \pe\ cannot win the infinite game, but what might not
be clear is {\it when does this happens; we know it eventually happen in finitely many round, but till which round \pe\ has not lost yet}.
The non--representability of $\Cm\At\A$ amounts  to that $\Cm\At\A\notin \bold S{\sf Nr}_n\CA_{n+k}$ for some finite $k$ because
$\RCA_n=\bigcap_{k<\omega}\bold S{\sf Nr}_n\CA_{n+k}$. Can we {\bf pin down} the value of $k$? 
getting an estimate that is not `infinitely' loose. 
{\it By adjusting the number of greens in the proof of theorem \ref{can} to be the least possible that outfits the $n$ `reds', 
namely, $n+1$, one gets a finer result than Hodkinson's \cite{Hodkinson} where Hodkinson uses an `overkill' of  
infinitely many greens. 
By truncating  the greens to be $n+1$, which is the smallest number $\lambda(>n)$ (of greens) for which \pa\ has a \ws\ in the private 
\ef\ forth game ${\sf EF}_\lambda(\lambda, n)$, lifted to the rainbow game $G^{n+3}_{\omega}(\At\CA_{n+1, n})$, 
we could tell when $\bold S{\sf Nr}_n\CA_{n+k}$,  $3\leq k\leq \omega$ {\bf `stops to be atom--canonical'. 
This happened when $\Cm\At\A$ {\bf `stopped to be representable at $k=3$'}}. Here the number of nodes, namely, 
$n+3$ used by \pa\ in the graph game tells us that 
$\Cm \bf At$ stopped to have $m$--dilations for any $m\geq n+3$. 
The finite number of rounds $r$ gives no additional information,  as far as non--atom canonicity is concerned, 
so we can save ourselves the hassle of 
finding the least such $r$.  This is simply immaterial in the present context for here it is the (finite) number of nodes
used by \pa\ during the play, which is $n+3$,  not the (finite) number of rounds $<\omega$ 
needed for him to implement his \ws,  that matters to conclude that $\bold S{\sf Nr}_n\CA_{n+3}$ is not atom--canonical.}

Now what if in the `Monk construction' based on a graph having finite chromatic number as given in th construction in theorem \ref{conditional2},  we replace this graph by 
$\G$ for which $\chi(\G)=\infty?$    
Let us approach the problem abstractly. We procced like in \cite[\S 6.3]{HHbook2} using the notation in {\it op.cit}.
Let $\G$ be any graph. One can  define a family of first order structures (labelled graphs)  in the signature $\G\times n$, denote it by $I(\G)$
like we did in the proof of theorem \ref{conditional2} as follows: The first order model $M$ is in $I(\G)$ $\iff$ 
for all $a,b\in M$, there is a unique $p\in \G\times n$, such that
$(a,b)\in p$. If  $M\models (a,i)(x,y)\land (b,j)(y,z)\land (c,l)(x,z)$, then $| \{ i, j, l \}> 1 $, or
$ a, b, c \in \G$ and $\{ a, b, c\} $ has at least one edge
of $\G$.
For any graph $\Gamma$, let $\rho(\Gamma)$ be the atom structure defined from the class of models satisfying the above,
these are maps from $n\to M$, $M\in I(\Gamma)$, endowed with an obvious equivalence relation,
with cylindrifiers and diagonal elements defined as Hirsch and Hodkinson define atom structures from classes of models,
and let $\mathfrak{M}(\Gamma)$ be the complex algebra of this atom structure as defiined  on p.78 right before 
\cite[Lemma 3.6.4]{HHbook2}.

We used the Monk--like algebras in \cite{weak, ANT} to construct a sequence of bad Monk algebras converging to a good one 
proving non finite axiomatizability of $\sf RRA$ and $\RCA_n$.
Let us reverse the process, dealing with good Monk algebras converging to a bad one aspiring to 
prove non--first order definability of the class of strongly representable
atom structures of $\RA$ and $\RCA_n$.

Let $\G$ be a graph. Consider the relation algebra atom structure $\alpha(\G)$ defind exactly like in the proof of theorem 
\ref{conditional2}. Then the relation complex algebra based on this atom structure will  have an $n$--dimensional cylindric basis
and, in fact, the cylindric atom structure of $\mathfrak{M}(\G)$ 
is isomorphic (as a cylindric algebra
atom structure) to the atom structure $\Mat_n(\alpha(\G))$ of all $n$--dimensional basic
matrices over the relation algebra atom structure $\alpha(\G)$. 
It is plausible that one can prove that 
$\alpha(\G)$ is strongly representable $\iff \mathfrak{M}(\G)$ is representable 
$\iff \G$ has infinite chromatic number, so that one gets the result, that the class of strongly representable atom structure 
both $\RA$s and 
$\CA$s of finite dimension at least three, is not elementary in one go.
The underlying idea here is that shade of red $\rho$
will appear in the {\it ultrafilter extension} of $\G$, if it 
has infinite chromatic number, as a reflexive node 
\cite[Definition 3.6.5]{HHbook2} and its $n$--copies $(\rho, i)$,  $i<n$, 
can be used to completely represent
$\mathfrak{M}(\G)^{+}$. 

For a variety $\V$ of Boolean algebras with operators, let ${\sf Str}(\V)$ be the following class of atom structures: $\{\F: \Cm\F\in \V\}$ \cite[Definition 2.89]{HHbook}.
Using a rainbow construction, we showed that $\bold S\sf Nr_n\CA_{n+k}$ is not 
atom--canonical, for any $k\geq 3$.
We know that 
${\sf Str}({\RCA}_n)$
is not elementary \cite[Corollary 3.7.2]{HHbook2}; this is proved using Monk-like algebras. 
Fix finite $k>2$.  Let $\V_k=\bold S{\sf Nr}_n\CA_{n+k}$. Then ${\sf Str}(\bold S{\sf Nr}_n\CA_{n+k})$ is not elementary 
$\implies   \V_k$ is not--atom canonical \cite[Theorem 2.84]{HHbook}.  
{\it But the converse implication, namely, 
$\V_k$ is not atom--canonical $\implies {\sf Str}(\V_k)$ not elementary does not hold in general.}
It is easy to show that there has to be a finite $k<\omega$ such that ${\sf Str}(\V_k)$ is not elementary.
It therefore seems plausible that Rainbows and Monk--like algebras 
can  be reconciled  to control the value of $m$ for which ${\sf Str}(\bold S{\sf Nr}_n\CA_{n+m})$ ($m>2$) 
is not elementary, thereby 
using a combination of variants of Ramsey's theorem 
and Erd\"os probabalistic graphs with a `rainbow intervention' to control the `extra dimensions'. 
The latter probabalistic
technique is used  in the aforementioned \cite[Corollary 3.7.2]{HHbook}.

\end{document}